\theoremstyle{plain}
\newtheorem{theorem}[subsection]{Theorem}
\newtheorem{lemma}[subsection]{Lemma}
\newtheorem{corollary}[subsection]{Corollary}
\newtheorem*{lemma*}{Lemma}
\theoremstyle{definition}
\newtheorem{remark}[subsection]{Remark}
\newtheorem*{remark*}{Remark}
\newtheorem*{example*}{Example}
\def\al{\alpha} 
\def\be{\beta}
\def\th{\theta} 
\def\ka{\kappa}
\def\si{\sigma} 
\def\ta{\tau} 
\def\ph{\varphi} 
\def\ps{\psi} 
\def\Ga{\Gamma}
\def\Ph{\Phi} 
\def\Ps{\Psi}
\def\o{\circ} 
\def\i{^{-1}} 
\def\x{\times}
\def\p{\partial}
\def\R{{\mathbb R}}
\def\exp{\operatorname{exp}}
\def\Imm{\operatorname{Imm}}
\let\on=\operatorname
\let\wh=\widehat
\let\mc=\mathcal
\newcommand{\ud}{\,\mathrm{d}}
\begin{document}

\authorheadline{M. Bruveris, P.W. Michor  and D. Mumford}
\runningtitle{Geodesic Completeness for Sobolev Metrics on $\on{Imm}(S^1,\R^2)$}

\begin{frontmatter}

\title{Geodesic Completeness for Sobolev Metrics on the Space of Immersed Plane Curves}

\author[1]{Martins Bruveris}
\cormark[1]
\cortext[1]{Corresponding author}
\ead{martins.bruveris@epfl.ch}
\address[1]{Institut de mathématiques, EPFL, Lausanne 1015, Switzerland}

\author[2]{Peter W. Michor}
\address[2]{Fakult\"at f\"ur Mathematik, Universit\"at Wien,
Oskar-Morgenstern-Platz 1, Wien 1090, Austria}

\author[3]{David Mumford}
\address[3]{Division of Applied Mathematics, Brown University,
Box F, Providence, RI 02912, USA}

\received{Some time}
\accepted{Some time later}

\begin{abstract}
We study properties of Sobolev-type metrics on the space of immersed plane curves. 
We show that the geodesic equation for Sobolev-type metrics with constant coefficients of order 2 
and higher is globally well-posed for smooth initial data as well as initial data in certain 
Sobolev spaces. Thus the space of closed plane curves equipped with such a metric is geodesically 
complete. We find lower bounds for the geodesic distance in terms of curvature and its derivatives.   
\end{abstract}
\MSC{58D15 (primary); 35G55, 53A04, 58B20 (secondary)}

\end{frontmatter}

\section{Introduction}
Sobolev-type metrics on the space of plane immersed curves were independently introduced in 
\cite{Charpiat2007,Michor2006c, Mennucci2007}. They are used in computer vision, shape 
classification and tracking, mainly in the form of their induced metric on shape space, which is 
the orbit space under the action of the reparameterization group. 
See \cite{Kurtek2012,Sundaramoorthi2011} for applications of 
Sobolev-type metrics and \cite{Bauer2013_preprint,Michor2007} for an overview of their mathematical 
properties. Sobolev-type metrics were also generalized to immersions of higher dimensional 
manifolds in \cite{Bauer2011b,Bauer2012d}.     

It was shown in \cite{Michor2007} that the geodesic equation of a Sobolev-type metric of order 
$n\geq 1$ is locally well-posed and this result was extended in \cite{Bauer2011b} to a larger class 
of metrics and immersions of arbitrary dimension. The main result of this paper is to show global 
well-posedness of the geodesic equation for Sobolev-type metrics of order $n\geq 2$ with constant 
coefficients. In particular we prove the following theorem: 

\begin{theorem}\label{mainTheorem}
Let $n\geq 2$ and the metric $G$ on $\on{Imm}(S^1,\R^2)$ be given by
\[
G_c(h,k) = \int_{S^1} \sum_{j=0}^n a_j \langle D_s^j h, D_s^j k \rangle \ud s\,,
\]
with $a_j\geq 0$ and $a_0,a_n \neq 0$. Given initial conditions $(c_0, u_0) \in T\on{Imm}(S^1,\R^2)$ the solution of the geodesic equation 
\begin{equation*}
\begin{split}
\p_t \left(\sum_{j=0}^n (-1)^j \,|c'|\, D_s^{2j} c_t\right) &=
-\frac{a_0}2 \,|c'|\, D_s\left( \langle c_t, c_t \rangle v \right) \\
&\qquad{} + \sum_{k=1}^n \sum_{j=1}^{2k-1} (-1)^{k+j} \frac{a_k}{2}\, |c'|\, D_s
\left(\langle D_s^{2k-j} c_t, D_s^j c_t \rangle v \right)\,.
\end{split}
\end{equation*}
for the metric $G$ with initial values $(c_0,u_0)$ exists for all time.
\end{theorem}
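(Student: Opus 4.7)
My plan is to combine the local well-posedness result of \cite{Bauer2011b, Michor2007} with a priori estimates ruling out finite-time blow-up. After extending the geodesic spray to a suitable Sobolev completion $\on{Imm}^q(S^1,\R^2)$ with $q \gg n$, it is enough to show that on the maximal interval of existence $[0,T^\ast)$ the curve $c(t)$ stays in a bounded subset of $\on{Imm}^q$ away from its boundary (i.e.\ $|c'|$ bounded away from $0$ and the relevant Sobolev norm bounded). If so, applying the local existence theorem at a time close to $T^\ast$ would extend the solution past $T^\ast$, contradicting maximality.

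The first step is energy conservation: $E_0 := G_{c(t)}(c_t,c_t)$ is constant along a geodesic. Since $a_0, a_n > 0$, this immediately controls $\int_{S^1}\langle c_t,c_t\rangle|c'|\,\ud\theta$ and $\int_{S^1}\langle D_s^n c_t, D_s^n c_t\rangle|c'|\,\ud\theta$ uniformly in $t$, and integrating in $t$ yields the linear-in-time bound $\on{dist}^G(c_0,c(t)) \le t\sqrt{E_0}$ on the geodesic distance. I would then invoke the lower bounds for $\on{dist}^G$ in terms of curvature and its derivatives announced in the abstract to convert this into uniform-on-compacts upper bounds on the length, on $\sup |c'|$, on $\sup 1/|c'|$, and on $\|\ka\|_{H^k}$ up to the relevant order. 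In particular $c(t)$ remains a genuine immersion and its $H^n$-norm stays bounded on $[0,T^\ast)$.

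The main obstacle, and the heart of the proof, is the bootstrap from this order-$n$ geometric control to control of $c$ and $c_t$ in $H^q$ for $q > n$, which is what the local theorem requires for extension. I would attack this by viewing the geodesic equation as an evolution of the momentum $p := \sum_j (-1)^j |c'|\, D_s^{2j} c_t$: the map $c_t \mapsto p$ is a self-adjoint elliptic operator of order $2n$ whose coefficients depend only on $c'$, and whose invertibility constants are controlled by the bounds on $|c'|^{\pm 1}$ from the previous step. Combining this with multiplicative Sobolev estimates for the nonlinear right-hand side, together with the embedding $H^n \hookrightarrow C^1$ (which is precisely where the assumption $n \ge 2$ enters), a Gronwall argument on $\|c\|_{H^{n+j}} + \|c_t\|_{H^{n+j}}$ inducted on $j$ should close the estimates and complete the proof.

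I expect the technical core to lie in handling the right-hand side $\sum_{k,j} \langle D_s^{2k-j} c_t, D_s^j c_t\rangle v$: when $j$ or $2k-j$ is close to $n$, a naive estimate loses derivatives, and one must exploit the symmetric, elliptic structure of the momentum operator (and possibly commute $D_s^{n-k}$ through carefully) to absorb the top-order terms on the left-hand side, leaving only lower-order nonlinearities that can be controlled by the already-established bounds.
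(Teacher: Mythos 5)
Your first two steps coincide with the paper's: conservation of $G_{c(t)}(c_t,c_t)$ traps the geodesic in a metric ball of radius $t\sqrt{E_0}$, and the lower bounds on $\on{dist}^G$ (Thm.~\ref{thm:ka_bound}, Lem.~\ref{lem:lenpt_inv_bound}, Cor.~\ref{cor:lenpt_inv_bound}, Lem.~\ref{lem:Hk_local_equivalence}, Lem.~\ref{lem:c_bounded}) convert this into uniform control of $\ell_c$, $|c'|^{\pm1}$, $\|D_s^k\ka\|_{L^2(ds)}$ and hence of $\|c\|_{H^n}$. Where you diverge is the continuation step. You propose to run local well-posedness at a high order $q\gg n$ and then bootstrap the conserved order-$n$ control up to $H^q$ by a Gronwall induction on $\|c\|_{H^{n+j}}+\|c_t\|_{H^{n+j}}$. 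The paper instead pushes local well-posedness \emph{down} to the minimal order $p=n$ (Thm.~\ref{thm:geod_ex}), which is nontrivial because at that regularity $\bar L_c$ has distributional coefficients and its inversion requires the constant-speed reparametrization trick of Lem.~\ref{lem:Lc_inv_smooth}; the payoff is that the conserved energy \emph{is} the $H^n(ds)$-norm of the velocity up to constants, so conditions for continuation ($\|\bar L_c^{-1}p\|_{H^n}$ and $\|H_c(u,u)\|_{H^{-n}}$ bounded, the latter estimated by duality against $m\in H^n$ using Cor.~\ref{cor:Hk_multiplication}) follow with no bootstrap at all. Your route can be made to work, but note two things. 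First, the bootstrap you describe is exactly the ``no loss, no gain'' phenomenon (uniformity of the existence interval in the Sobolev order, Ebin--Marsden), which the paper cites as a known result; you could invoke it rather than re-derive it, but as written your proposal leaves the tame structure of the induction -- that the $H^{n+j}$-norms enter only linearly with coefficients controlled by the already-bounded order-$n$ quantities -- asserted rather than verified, and that is the entire content of the step. Second, your anticipated difficulty of ``losing derivatives'' in $\langle D_s^{2k-j}c_t,D_s^jc_t\rangle$ and needing to absorb top-order terms into the left-hand side is not where the real issue lies: the composite $\bar L_c^{-1}\circ H_c$ maps $H^p\times H^p\to H^p$ with no derivative loss (this is what makes the equation an ODE on a Hilbert space in the first place); the genuine work is in controlling the operator norms of $\bar L_c^{-1}$ and of multiplication uniformly on metric balls, which is what Lem.~\ref{lem:Hk_local_equivalence} and Cor.~\ref{cor:Hk_multiplication} supply.
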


Here $\Imm(S^1,\mathbb R^2)$ denotes the 
space of all smooth, closed, plane curves with nowhere zero tangent vectors; this space is open in 
$C^\infty(S^1,\mathbb R^2)$. We assume that $c\in \Imm(S^1,\mathbb R^2)$ and $h$, $k$ are vector 
fields along $c$, $\ud s=|c'|\ud\th$ is the arc-length measure, $D_s=\frac 1{|c'|}\p_\th$ is the 
derivative with respect to arc-length, $v = c'/|c'|$ is the unit length tangent vector to $c$
and $\langle \;,\; \rangle$ is the Euclidean inner product on $\mathbb R^2$.

Thus if $G$ is a Sobolev-type metric of order at least 2, then the Riemannian manifold 
$(\on{Imm}(S^1,\R^2), G)$ is geodesically complete. 
If the Sobolev-type metric is invariant under the reparameterization group $\on{Diff}(S^1)$, 
also the induced metric on shape space $\Imm(S^1,\mathbb R^2)/\on{Diff}(S^1)$ is geodesically complete.
The latter space is an infinite dimensional orbifold; see  
\cite[2.5 and 2.10]{Michor2006c}.

Theorem \ref{mainTheorem} seems to be the first result about geodesic completeness on manifolds 
of mappings outside the realm of diffeomorphism groups and manifolds of metrics.
In the first paragraph of \cite[p. 140]{Ebin1970} a proof is sketched that a right 
invariant $H^s$-metric on the group of volume preserving diffeomorphisms on a compact manifold $M$ is 
geodesically complete, if $s\ge \dim(M)/2+1$. In \cite{TrouveYounes05} there is an implicit 
result that a topological group of diffeomorphisms constructed from a reproducing kernel Hilbert 
space of vector 
fields whose reproducing kernel is at least $C^1$, is geodesically complete. For a certain metric 
on a group of diffeomorphisms on $\mathbb R^n$ with $C^1$ kernel geodesic completeness is shown in 
\cite[Thm.~2]{Michor2013}. Metric completeness and existence of minimizing geodesics have also been studied on the diffeomorphism group in \cite{Bruveris2014}. The manifold of all Riemannian metrics with fixed volume form is 
geodesically complete for the $L^2$-metric (also called the Ebin metric).

Sobolev-type metrics of order 1 are not geodesically complete, since it is possible to shrink a 
circle to a point along a geodesic in finite time, see \cite[Sect.~6.1]{Michor2007}. Similarly a 
Sobolev metric of order 2 or higher with both $a_0, a_1=0$ is a geodesically incomplete metric on 
the space $\on{Imm}(S^1,\R^2)/\on{Tra}$ of plane curves modulo translations. In this case it is 
possible to blow up a circle along a geodesic to infinity in finite time; see Rem.\  
\ref{rem:incomplete}.     

In order to prove long-time existence of geodesics, we need to study properties of the geodesic distance. 
In particular we show the following theorem regarding continuity of curvature $\ka$ and its derivatives.

\begin{theorem}
\label{thm:l2_bound_intro}
Let $G$ be a Sobolev-type metric of order $n\geq 2$ with constant coefficients and $\on{dist}^G$ 
the induced geodesic distance. If $0 \leq k \leq n-2$, then the functions 
\begin{align*}
D_s^k (\ka) \sqrt{|c'|} &: (\on{Imm}(S^1,\R^2), \on{dist}^G) \to L^2(S^1,\R) \\
D_s^{k+1} (\log|c'|) \sqrt{|c'|} &: (\on{Imm}(S^1,\R^2), \on{dist}^G) \to L^2(S^1,\R)
\end{align*}
are continuous and Lipschitz continuous on every metric ball.
\end{theorem}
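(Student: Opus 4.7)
The plan is to establish, for every smooth path $t\mapsto c(t,\cdot)$ in $\Imm(S^1,\R^2)$ with velocity $h=c_t$, the key bound
\[
\bigl\|\p_t\bigl(D_s^k(\ka)\sqrt{|c'|}\bigr)\bigr\|_{L^2(\ud\th)} + \bigl\|\p_t\bigl(D_s^{k+1}(\log|c'|)\sqrt{|c'|}\bigr)\bigr\|_{L^2(\ud\th)} \le C(c)\,\|h\|_{G_c},
\]
with $C(c)$ uniformly bounded on every $\on{dist}^G$-ball. Integrating in $t$ and then taking the infimum over paths joining two fixed curves converts the path length into the geodesic distance and yields the claimed Lipschitz bounds; continuity of the two maps into $L^2(\ud\th)$ follows immediately.

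The explicit form of the two time derivatives is obtained from the standard variational identities
\[
\p_t|c'|=\langle D_sh,v\rangle|c'|,\quad [\p_t,D_s]=-\langle D_sh,v\rangle D_s,\quad \p_tv=D_sh-\langle D_sh,v\rangle v,\quad \p_t\log|c'|=\langle D_sh,v\rangle,
\]
together with $\p_t\ka=\langle D_s^2h,n\rangle-2\ka\langle D_sh,v\rangle$, where $n$ is the unit normal. Iterating, one writes $\p_t(D_s^k\ka\sqrt{|c'|})$ and $\p_t(D_s^{k+1}\log|c'|\sqrt{|c'|})$ as polynomial combinations of $D_s^jh$ for $j\le k+2$, of $D_s^j\ka$ and $D_s^{j+1}\log|c'|$ for $j\le k$, of the frame vectors $v,n$, and of $|c'|^{\pm 1/2}$. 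Because $a_0,a_n>0$, the norm $\|h\|_{G_c}$ dominates both $\|h\|_{L^2(\ud s)}$ and $\|D_s^nh\|_{L^2(\ud s)}$, and Poincar\'e-type interpolation together with the Sobolev embedding $H^1(\ud s)\hookrightarrow L^\infty$ on a circle of arc-length $\int\ud s$ yield bounds on $\|D_s^jh\|_{L^2(\ud s)}$ for $j\le n$ and on $\|D_s^jh\|_{L^\infty}$ for $j\le n-1$ in terms of $\|h\|_{G_c}$, $|c'|_{\min}^{-1}$, $|c'|_{\max}$, and $\int\ud s$. Since $k\le n-2$, the top-order factor $D_s^{k+2}h$ lies in the controlled range, and H\"older's inequality pairs it against $L^\infty$-bounds on the remaining factors.

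The main obstacle is that $C(c)$ depends on pointwise bounds on $|c'|^{\pm 1}$ and on $D_s^j\ka$, $D_s^{j+1}\log|c'|$ for $j<k$, and these must themselves be shown to stay bounded on $\on{dist}^G$-balls. I would address this by simultaneous induction on $k=0,1,\dots,n-2$. The base case controls $|c'|$: from $\p_t\log|c'|=\langle D_sh,v\rangle$ together with $\|D_sh\|_\infty\le \widetilde C(|c'|_{\min},|c'|_{\max})\|h\|_{G_c}$, a Gronwall argument bounds $\log|c'|$, hence $|c'|^{\pm 1}$, along any smooth path and therefore on every metric ball. In the inductive step one derives, from the variational identities and H\"older's inequality, a differential inequality of the form
\[
\tfrac{d}{dt}\Bigl(\|D_s^k\ka\sqrt{|c'|}\|_{L^2(\ud\th)}^2+\|D_s^{k+1}\log|c'|\sqrt{|c'|}\|_{L^2(\ud\th)}^2\Bigr)\le F_k\!\bigl(Q_{k-1}(c)\bigr)\,\bigl(1+Q_k(c)\bigr)\,\|h\|_{G_c},
\]
where $Q_k(c)$ collects the $L^2(\ud s)$-quantities controlled up to step $k$ and $F_k$ is a continuous function. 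Gronwall gives uniform $L^2$-bounds on metric balls; when $k<n-2$, one more application of $H^1(\ud s)\hookrightarrow L^\infty$ (which uses the next derivative, still in the controlled range) promotes these to the $L^\infty$-bounds required at the following step, closing the induction. Once all auxiliary bounds are in place, the key estimate holds with $C$ uniform on each $\on{dist}^G$-ball, and the theorem follows.
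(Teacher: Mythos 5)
Your proposal is correct and follows essentially the same route as the paper's proof of this theorem (Thm.~\ref{thm:ka_bound}): the variational formulas for $D_s^k\ka$ and $D_s^{k+1}\log|c'|$, an induction on $k$ that bounds the time derivative by $(1+\|D_s^k\ka\|_{L^2(ds)})\sqrt{G_c(c_t,c_t)}$ using Poincar\'e inequalities whose constants depend only on quantities already controlled on metric balls, a Gronwall argument to close the estimate, and an infimum over paths of controlled length. The only cosmetic differences are that the paper applies Gronwall to the $L^2(d\th)$-distance from the endpoint rather than to the squared norms (which also avoids the harmless division-by-$X$ issue when differentiating $X^2$), and that it needs only bounds on $\ell_c$ rather than pointwise bounds on $|c'|^{\pm1}$, since its Poincar\'e constants depend on the curve only through its length.
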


A similar statement can be derived for the $L^\infty$-continuity of curvature and its derivatives; 
see Rem.\ \ref{rem:Linfty_continuity}.

The full proof of Thm.~\ref{mainTheorem} is surprisingly complicated. One reason is that we have 
to work on the Sobolev completion (always with respect to the original parameter $\th$ in $S^1$)
of the space of immersions in order to apply results on ODEs on 
Banach spaces. Here the operators (and their inverses and adjoints) acquire non-smooth coefficients. 
Since we we want the Sobolev order as low as possible, the geodesic equation involves $H^{-n}$; see Sect.~\ref{lem:Ds_smooth_sobolev}.
Eventually we use that the metric operator has constant coefficients.
We have to use estimates with precise constants which are uniformly bounded on metric balls.

In \cite{Bauer2011b} the authors studied Sobolev metrics on immersions of higher dimensional 
manifolds. One might hope that similar methods to those used in this article can be applied to show 
the geodesic completeness of the spaces $\on{Imm}(M,N)$ with $M$ compact and $(N, \bar{g})$ a 
suitable Riemannian manifold. A crucial ingredient in the proof for plane curves are the Sobolev 
inequalities Lem. \ref{lem:poincare} and Lem. \ref{lem:poincare2} with explicit constants, which 
only depend on the curve through the length. The lack of such inequalities for general $M$ will one 
of the factors complicating life in higher dimensions.      

\section{Background Material and Notation}

\subsection{The Space of Curves}

The space
\[
\on{Imm}(S^1, \R^2) = \left\{ c \in C^\infty(S^1, \R^2) \,:\, c'(\th) \neq 0 \right\}
\]
of immersions is an open set in the Fr\'echet space $C^\infty(S^1, \R^2)$ with respect to the $C^\infty$-topology and thus itself a smooth Fr\'echet manifold. The tangent space of $\on{Imm}(S^1, \R^2)$ at the point $c$ consists of all vector fields along the curve $c$. It can be described as the space of sections of the pullback bundle $c^\ast T\R^2$,
\begin{equation*}
T_c \on{Imm}(S^1,\R^2) = \Ga(c^\ast T\R^2) =
\left\{h: \quad \begin{aligned}\xymatrix{
& T\R^2 \ar[d]^{\pi} \\
S^1 \ar[r]^c \ar[ur]^h & \R^2
} \end{aligned} \right\}\,.
\end{equation*}
In our case, since the tangent bundle $T\R^2$ is trivial, it can also be identified with the space of $\R^2$-valued functions on $S^1$,
\[
T_c \on{Imm}(S^1, \R^2) \cong C^\infty(S^1, \R^2)\,.
\]

For a curve $c \in \on{Imm}(S^1, \R^2)$ we denote the parameter by $\th \in S^1$ and differentiation $\p_\th$ by $'$, i.e., $c'=\p_\th c$. Since $c$ is an immersion, the unit-length tangent vector $v = c' / |c'|$ is well-defined. Rotating $v$ by $\tfrac \pi 2$ we obtain the unit-length normal vector $n = Jv$, where $J$ is rotation by $\tfrac \pi 2$. We will denote by $D_s = \p_\th / |c_\th|$ the derivative with respect to arc-length and by $\ud s = |c_\th| \ud \th$ the integration with respect to arc-length. To summarize we have
\begin{align*}
v &= D_s c\,, & n &= Jv\,, & D_s &= \frac 1{|c_\th|} \p_\th\,, & \ud s &= |c_\th| \ud \th\,.
\end{align*}
The curvature can be defined as
\[
\ka = \langle D_s v, n \rangle
\]
and we have the Frenet-equations
\begin{align*}
D_s v &= \ka n \\
D_s n &= -\ka v\,.
\end{align*}
The length of a curve will be denoted by $\ell_c = \int_{S^1} 1 \ud s$.
We define the turning angle $\al : S^1 \to \R/2\pi\mathbb Z$ of a curve $c$ by $v(\th) = (\cos \al(\th), \sin \al(\th))$. Then curvature is given by $\ka = D_s \al$.

\subsection{Variational Formulae}\label{variational-formulae}

We will need formulas that express, how the quantities $v$, $n$ and $\ka$ change, if we vary the 
underlying curve $c$. For a smooth map $F$ from $\on{Imm}(S^1,\R^2)$ to any convenient vector space 
(see \cite{KM97})
we denote by  
\[
D_{c,h} F = \left.\frac{\ud}{\ud t}\right|_{t=0} F(c+th)
\]
the variation in the direction $h$.

The proof of the following formulas can be found for example in \cite{Michor2007}.
\begin{align*}
D_{c,h} v &= \langle D_s h, n \rangle n\quad\implies\quad D_{c,h}\al= \langle D_s h, n \rangle\\
D_{c,h} n &= -\langle D_s h, n \rangle v \\
D_{c,h} \ka &= \langle D_s^2 h, n \rangle - 2\ka \langle D_s h, v \rangle \\
D_{c,h}\left( |c'|^k \right) &= k\,\langle D_s h, v \rangle \,|c'|^k\,.
\end{align*}
With these basic building blocks, one can use the following lemma to compute the variations of higher derivatives.

\begin{lemma} 
\label{lem:Ds_rmap}
If $F$ is a smooth map $F : \on{Imm}(S^1,\R^2) \to C^\infty(S^1, \R^d)$, then the variation of the composition $D_s \o F$ is given by
\[
D_{c,h} \left(D_s \o F\right) = D_s \left(D_{c,h} F\right) - \langle D_s h, v \rangle D_s F(c)\,.
\]
\end{lemma}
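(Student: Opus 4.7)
The plan is to reduce everything to the product rule and the variational formula for $|c'|^k$ that has already been established. The key observation is that $D_s = \tfrac{1}{|c'|}\p_\th$, where the operator $\p_\th$ acts on functions of $\th\in S^1$ and does not depend on $c$ at all, while the factor $\tfrac{1}{|c'|}$ depends smoothly on $c$. So the $c$-dependence of $D_s\o F$ at a point $c$ splits into two independent pieces: the coefficient $|c'|^{-1}$ and the value $F(c)\in C^\infty(S^1,\R^d)$.

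First I would write out, for fixed $\th$,
\[
(D_s F(c))(\th) = \frac{1}{|c'(\th)|}\,\p_\th F(c)(\th),
\]
and differentiate along the one-parameter family $c+th$ at $t=0$ using the Leibniz rule. This produces two terms: one where the derivative falls on $|c'|^{-1}$ and one where it falls on $F(c)$. For the second term, $\p_\th$ commutes with $D_{c,h}$ because it acts only on the $\th$ variable, so that term becomes $\tfrac{1}{|c'|}\p_\th(D_{c,h} F) = D_s(D_{c,h} F)$.

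Next I would handle the first term using the already-listed formula $D_{c,h}(|c'|^k) = k\langle D_s h,v\rangle\,|c'|^k$ with $k=-1$, giving $D_{c,h}(|c'|^{-1}) = -\langle D_s h,v\rangle\,|c'|^{-1}$. Multiplying by $\p_\th F(c)$ yields $-\langle D_s h,v\rangle\,D_s F(c)$. Adding the two contributions produces exactly the claimed identity
\[
D_{c,h}(D_s\o F) = D_s(D_{c,h} F) - \langle D_s h,v\rangle\,D_s F(c).
\]

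There is really no hard step here; the lemma is a bookkeeping device for the Leibniz rule. The only minor care needed is the justification that one may interchange $D_{c,h}$ with $\p_\th$, which is immediate because the curve variation $c\mapsto c+th$ is affine in $t$ and $\p_\th$ is a continuous linear operator in the convenient calculus setting of \cite{KM97}, so differentiation in $t$ at $t=0$ commutes with $\p_\th$ applied pointwise in $\th$.
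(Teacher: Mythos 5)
Your proposal is correct and is essentially identical to the paper's own proof: both split $D_s = |c'|^{-1}\p_\th$, apply the Leibniz rule, commute the linear operator $\p_\th$ with the variation $D_{c,h}$, and invoke the formula $D_{c,h}(|c'|^k) = k\langle D_s h, v\rangle\,|c'|^k$ with $k=-1$. Nothing further is needed.
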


\begin{proof}
The operator $\p_\th$ is linear and thus commutes with the derivative with respect to $c$. Thus we have
\begin{align*}
D_{c,h} \left(D_s \o F\right) &= D_{c,h} \left( |c'|\i \p_\th F(c) \right)\\
&= |c'|\i \p_\th\left(D_{c,h}F\right) + \left(D_{c,h} \,|c'|\i\right) \p_\th F(c) \\
&= D_s \left( D_{c,h} F \right) - \langle D_s h, v \rangle\, |c'|\i \p_\th F(c) \\
&= D_s \left( D_{c,h} F \right) - \langle D_s h, v \rangle\, D_s F(c) \,. \qedhere
\end{align*}
\end{proof}

\subsection{Sobolev Norms}

In this paper we will only consider Sobolev spaces of integer order. For $n\geq 1$ the $H^n(d\th)$-norm on $C^\infty(S^1,\R^d)$ is given by
\begin{equation}
\label{eq:Hn_dtheta}
\| u \|_{H^n(d\th)}^2 = \int_{S^1} |u|^2 + |\p_\th^n u|^2 \ud \th\,.
\end{equation}
Given $c \in \on{Imm}(S^1,\R^2)$, we define the $H^n(ds)$-norm on $C^\infty(S^1,\R^d)$ by
\begin{equation}
\label{eq:Hn_ds}
\| u \|_{H^n(ds)}^2 = \int_{S^1} |u(s)|^2 + |D_s^{n}u(s)|^2 \ud s\,.
\end{equation}
Note that in \eqref{eq:Hn_ds} integration and differentiation are performed with respect to the arc-length of $c$, while in \eqref{eq:Hn_dtheta} the parameter $\th$ is used. In particular the $H^n(ds)$-norm depends on the curve $c$. The norms $H^n(d\th)$ and $H^n(ds)$ are equivalent, but the constants do depend on $c$. We prove in Lem.\ \ref{lem:Hk_local_equivalence}, that if $c$ doesn't vary too much, the constants can be chosen independently of $c$.

The $L^2(d\th)$- and $L^2(ds)$-norms are defined similarly,
\begin{align*}
\| u \|^2_{L^2(d\th)} &= \int_{S^1} |u|^2 \ud \th\,, &
\| u \|^2_{L^2(ds)} = \int_{S^1} |u|^2 \ud s\,,
\end{align*}
and they are related via $\left\| u \sqrt{|c'|} \right\|_{L^2(d\th)} = \| u \|_{L^2(ds)}$. Whenever we write $H^n(S^1,\R^d)$ or $L^2(S^1,\R^d)$, we always endow them with the $H^n(d\th)$- and $L^2(d\th)$-norms.

For $n \geq 2$ we shall denote by
\[				
\on{Imm}^n(S^1,\R^2) = \{ c \,:\, c \in H^n(S^1,\R^2), c'(\th) \neq 0 \}
\]
the space of Sobolev immersions of order $n$. Because of the Sobolev embedding theorem, see 
\cite{Adams2003}, we have $H^2(S^1,\R^2) \hookrightarrow C^1(S^1,\R^2)$ and thus 
$\on{Imm}^n(S^1,\R^2)$ is well-defined. We will see in Sect.\ \ref{sec:Sobolev_immersions} that the 
$H^n(ds)$-norm remains well-defined if $c \in \on{Imm}^n(S^1,\R^2)$.   

The following result on point-wise multiplication will be used repeatedly. It can be found, among 
other places in \cite[Lem.\ 2.3]{Inci2013}. We will in particular use that $k$ can be negative. 

\begin{lemma}
\label{lem:sob_mult}
Let $n \geq 1$ and $k \in \mathbb Z$ with $|k|\leq n$ Then multiplication is a bounded bilinear map
\[
\cdot: H^{n}(S^1,\R^d) \x H^k(S^1,\R^d) \to H^k(S^1,\R)\,,\quad
(f, g) \mapsto \langle f, g \rangle
\]
\end{lemma}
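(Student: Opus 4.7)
The plan is to split by the sign of $k$, handle the non-negative case directly with the Leibniz rule, and reduce the negative case to it by duality on $S^1$.

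For $0 \le k \le n$, Leibniz gives
\begin{equation*}
\p_\th^k \langle f, g \rangle = \sum_{j=0}^k \binom{k}{j} \langle \p_\th^{k-j} f, \p_\th^{j} g \rangle,
\end{equation*}
and I would estimate each summand in $L^2(S^1,\R)$. In each term at least one of the two factors carries enough regularity to sit in $L^\infty(S^1)$: either $\p_\th^{j} g \in H^{k-j}$ with $k-j\ge 1$, or $\p_\th^{k-j}f \in H^{n-k+j}$ with $n-k+j\ge 1$. Since $n\ge 1$, one of these is always available for every $j$ (the only simultaneous failure would force $k=j$ and $n-k+j=0$, i.e.\ $n=0$). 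Using the one-dimensional Sobolev embedding $H^1(S^1)\hookrightarrow L^\infty(S^1)$ on whichever factor works and bounding the other in $L^2$ via H\"older produces
\begin{equation*}
\|\langle \p_\th^{k-j}f, \p_\th^{j}g\rangle\|_{L^2(d\th)} \le C_{n,j,k}\,\|f\|_{H^n}\|g\|_{H^k}.
\end{equation*}
Summing over $j$ and adding the analogous bound on $\|\langle f,g\rangle\|_{L^2}$ gives $\|\langle f,g\rangle\|_{H^k} \le C_{n,k}\|f\|_{H^n}\|g\|_{H^k}$.

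For $-n\le k<0$ I set $m=-k$, so $1\le m\le n$, and identify $H^{-m}(S^1,\R)$ with the topological dual of $H^m(S^1,\R)$ via the continuous extension of the $L^2$-pairing. For $f\in H^n(S^1,\R^d)$ and $g\in H^{-m}(S^1,\R^d)$ I would then define $\langle f,g\rangle\in H^{-m}(S^1,\R)$ as the distribution
\begin{equation*}
\varphi \longmapsto \langle g, f\,\varphi\rangle_{H^{-m},\,H^m}, \qquad \varphi\in H^m(S^1,\R).
\end{equation*}
The componentwise product $f\varphi$ lies in $H^m(S^1,\R^d)$ by the non-negative case just proved (applied coordinatewise, valid because $m\le n$), with an estimate $\|f\varphi\|_{H^m}\le C\|f\|_{H^n}\|\varphi\|_{H^m}$. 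Combined with $|\langle g, f\varphi\rangle|\le \|g\|_{H^{-m}}\|f\varphi\|_{H^m}$ and taking the supremum over $\|\varphi\|_{H^m}\le 1$, this delivers the required bound $\|\langle f,g\rangle\|_{H^{-m}} \le C\|f\|_{H^n}\|g\|_{H^{-m}}$.

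The only genuine subtlety, and the spot where I would spend the most care, is verifying that the distributional definition in the negative-order case is consistent, i.e.\ that it agrees with the pointwise product whenever $g$ happens to be smooth. This I would handle by approximating $g$ in $H^{-m}$ by trigonometric polynomials and passing to the limit in the pairing identity, using the continuity already established in the positive-order case. Everything else is routine Leibniz-plus-duality.
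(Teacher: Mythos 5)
Your argument is correct. Note, however, that the paper does not prove this lemma at all: it is quoted from Inci--Kappeler--Topalov \cite[Lem.\ 2.3]{Inci2013}, so there is no in-paper proof to compare against. Your Leibniz-plus-duality route is the standard self-contained argument and it goes through: in the case $0\le k\le n$ the dichotomy you identify (at least one factor of $\langle \p_\th^{k-j}f,\p_\th^{j}g\rangle$ lands in $H^1(S^1)\hookrightarrow L^\infty$, the other in $L^2$) is exactly right, and the only simultaneous failure indeed forces $n=0$, which is excluded. Two small points you should make explicit if you write this up: first, the paper's $H^k(d\th)$-norm \eqref{eq:Hn_dtheta} involves only the $0$-th and $k$-th derivatives, so your bounds on the intermediate quantities $\|\p_\th^{j}g\|_{L^2}$ and $\|\p_\th^{k-j}f\|_{H^{n-k+j}}$ tacitly invoke the (standard, Fourier-side) equivalence of this norm with the full Sobolev norm on $S^1$; second, in the negative-order case the consistency of the duality definition with the pointwise product is immediate for $g\in L^2$, since $\langle g, f\varphi\rangle_{\R^d}=\langle f,g\rangle_{\R^d}\,\varphi$ pointwise for scalar $\varphi$, so the density argument is only needed to identify your extension as \emph{the} continuous extension, not to verify compatibility. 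Neither point is a gap.
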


The last tool, that we will need is composition of Sobolev diffeomorphisms. For $n \geq 1$, define
\[
\mc D^n(S^1) = \{ \ph \,:\, \ph \text{ is $C^1$-diffeomorphism of $S^1$ and } \ph \in H^n(S^1,S^1)\}
\]
the group of Sobolev diffeomorphisms. The following lemma can be found in \cite[Thm.\ 1.2]{Inci2013}.

\begin{lemma}\label{Sobolev-composition}
Let $n \geq 2$ and $0 \leq k \leq n$. Then the composition map
\[
H^k(S^1,\R^d) \x \mc D^n(S^1) \to H^k(S^1,\R^d)\,,\quad
(f, \ph) \mapsto f \o \ph
\]
is continuous.
\end{lemma}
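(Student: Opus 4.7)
The plan is to proceed by induction on $k$, combined with a density argument that extends continuity from smooth data to the full Sobolev setting.

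For the base case $k=0$, the Sobolev embedding $H^n(S^1) \hookrightarrow C^{n-1}(S^1)$ (valid since $n \geq 2$) ensures that every $\ph \in \mc D^n(S^1)$ has a continuous derivative $\ph'$; since $\ph$ is a $C^1$-diffeomorphism of the compact circle, $|\ph'|$ is bounded away from zero, with bounds depending continuously on $\ph$ in the $H^n$-topology. The change of variables $s = \ph(\th)$ yields
\[
\|f \o \ph\|_{L^2(d\th)}^2 = \int_{S^1} |f(s)|^2 \, |(\ph^{-1})'(s)| \ud s \leq \|(\ph^{-1})'\|_{\infty} \|f\|_{L^2}^2\,,
\]
so composition with $\ph$ is a bounded operator on $L^2$ whose norm is locally uniformly bounded in $\ph$. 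Joint continuity at $(f,\ph)$ then follows from the splitting
\[
\|f\o\ph - g\o\ps\|_{L^2} \leq \|(f-g)\o\ph\|_{L^2} + \|g\o\ph - g\o\ps\|_{L^2}\,,
\]
where the first term uses the operator bound and the second is handled by approximating $g$ in $L^2$ by smooth $g_\ep \in C^\infty$, using the mean value theorem estimate $\|g_\ep \o \ph - g_\ep \o \ps\|_{L^2} \leq \|g_\ep'\|_\infty \|\ph - \ps\|_{L^2}$, and controlling the smoothing error $\|(g-g_\ep)\o\ph\|_{L^2}$ uniformly in $\ph$ near the base point.

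For the inductive step $1 \leq k \leq n$, I would apply the Faà di Bruno formula to express $\p_\th^k(f\o\ph)$ as a sum of terms of the form $(f^{(m)}\o\ph) \cdot P(\ph', \dots, \ph^{(k)})$ with $0 \leq m \leq k$, where $P$ is a polynomial in derivatives of $\ph$ of order at most $k$. Each $f^{(m)}$ lies in $H^{k-m}$ so the inductive hypothesis places $f^{(m)}\o\ph$ in $H^{k-m}$, and each $\ph^{(j)}$ lies in $H^{n-j}$. The multiplication lemma Lem.\ \ref{lem:sob_mult} then combines these factors into a term in $L^2$ with the appropriate continuity, yielding $\|f\o\ph\|_{H^k} \leq C(\ph)\|f\|_{H^k}$ with $C$ continuous in $\ph$.

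The main obstacle is proving continuity (as opposed to mere boundedness) in $\ph$ at the top-derivative term $m=k$, where $f^{(k)}$ is only in $L^2$ and composition with $\ph$ is not a priori continuous in $\ph$ on such low-regularity data. The remedy is the same density trick used in the base case: first establish continuity for $f \in C^\infty$ via the classical chain rule and dominated convergence, then extend to all $f \in H^k$ by combining the local operator bound $C(\ph)$ with density of $C^\infty$ in $H^k$. A subsidiary technicality is that the argument involves $\ph^{-1}$, so one must also verify that inversion $\ph \mapsto \ph^{-1}$ is continuous on $\mc D^n(S^1)$; this follows from the Banach-space implicit function theorem applied to $\ph\o\ps(\th) = \th$ together with the lower bound on $|\ph'|$ already established.
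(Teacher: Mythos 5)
The paper does not prove this lemma at all: it is quoted verbatim from \cite[Thm.~1.2]{Inci2013}, so there is no in-paper argument to compare against. Your sketch is essentially the standard proof from that reference (induction on $k$, Fa\`a di Bruno, the multiplication lemma, and a density argument to upgrade local boundedness of $f\mapsto f\o\ph$ to joint continuity in $(f,\ph)$), and the overall structure is sound. A few minor points. First, for $k\ge 1$ the Fa\`a di Bruno sum runs over $1\le m\le k$, not $0\le m\le k$; more importantly, one should check the borderline products explicitly, e.g.\ for $k=n$ the term $(f'\o\ph)\,\ph^{(n)}$ pairs $H^{n-1}$ with $L^2$, which Lem.~\ref{lem:sob_mult} handles since $n-1\ge 1$, but this bookkeeping is where the hypothesis $k\le n$ is actually used and deserves to be spelled out. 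Second, the ``main obstacle'' you identify at the top term $m=k$ is in fact already disposed of by the induction hypothesis at level $0$: the base case gives joint continuity of $(g,\ph)\mapsto g\o\ph$ on $L^2\x\mc D^n$, so $f^{(k)}\o\ph$ is jointly continuous into $L^2$ and multiplication by $(\ph')^k\in H^{n-1}$ is a bounded bilinear map $H^{n-1}\x L^2\to L^2$; the density trick therefore only needs to be run once, in the base case. Third, the continuity of inversion $\ph\mapsto\ph\i$ is not actually needed: the change-of-variables bound only uses $\|(\ph\i)'\|_\infty=(\min_\th|\ph'(\th)|)\i$, and the locally uniform lower bound on $|\ph'|$ follows from the continuity of $\ph\mapsto\ph'$ from $H^n$ into $C^0$. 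None of these affect the correctness of the argument.
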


Let $n\geq 2$ and fix $\ph \in \mc D^n(S^1)$. Denote by $R_\ph(h) = h \o \ph$ the composition with $\ph$. From Lem.\ \ref{Sobolev-composition} we see that $R_\ph$ is a bounded linear map $R_\ph : H^n \to H^n$. The following lemma tells us that the transpose of this map respects Sobolev orders.

\begin{lemma}\label{Sobolev-transpose}
Let $n \geq 2$, $\ph \in \mc D^n(S^1)$ and $-n \leq k \leq n-1$. Then the restrictions of $R_\ph^\ast$ are bounded linear maps
\[
R_\ph^\ast \upharpoonright H^k(S^1,\R^d) : H^k(S^1,\R^d) \to H^k(S^1,\R^d)\,.
\]
On $L^2(S^1,\R^d)$ we have the identity $R_{\ph\i}^\ast(f) = R_{\ph}(f)\, \ph'$.
\end{lemma}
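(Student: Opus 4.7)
The plan is to reduce the claim to Lem.~\ref{Sobolev-composition} and Lem.~\ref{lem:sob_mult} by first deriving an explicit pointwise formula for $R_\ph^\ast$ on $L^2$, and then treating nonnegative and negative values of $k$ by complementary routes. First I would verify the $L^2$ identity by change of variables: for smooth $f$ and $g$, $\langle f, R_{\ph\i} g\rangle_{L^2(d\th)} = \int_{S^1} f(\th) g(\ph\i(\th))\,d\th$, and substituting $\sigma = \ph\i(\th)$ gives $\int_{S^1} f(\ph(\sigma)) g(\sigma) \ph'(\sigma)\,d\sigma = \langle R_\ph(f)\,\ph', g\rangle_{L^2(d\th)}$, which is the stated identity $R_{\ph\i}^\ast(f) = R_\ph(f)\,\ph'$. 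Using that $\mc D^n(S^1)$ is a topological group so that $\ph\i \in \mc D^n(S^1)$, this is equivalent to the symmetric form $R_\ph^\ast(f) = R_{\ph\i}(f)\cdot (\ph\i)'$ with $(\ph\i)' \in H^{n-1}(S^1,\R)$.

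For $0 \leq k \leq n-1$, I would use this pointwise formula directly. Lem.~\ref{Sobolev-composition} applied to $\ph\i$ gives that $R_{\ph\i} : H^k(S^1,\R^d) \to H^k(S^1,\R^d)$ is bounded. Since $(\ph\i)' \in H^{n-1}$ and $|k| \leq n-1$, Lem.~\ref{lem:sob_mult} (with ambient Sobolev order $n-1 \geq 1$) ensures that multiplication by $(\ph\i)'$ is bounded $H^k \to H^k$. Composing these two bounded operations shows $R_\ph^\ast : H^k \to H^k$ is bounded in this range.

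For $-n \leq k \leq -1$, I would argue by duality. Given $f \in H^k$ and smooth $g$, we have $\langle R_\ph^\ast f, g\rangle_{L^2} = \langle f, R_\ph g\rangle_{L^2}$; since $1 \leq -k \leq n$, Lem.~\ref{Sobolev-composition} applied to $\ph$ gives that $R_\ph : H^{-k} \to H^{-k}$ is bounded, so $|\langle R_\ph^\ast f, g\rangle_{L^2}| \leq \|f\|_{H^k}\,\|R_\ph g\|_{H^{-k}} \leq C\|f\|_{H^k}\|g\|_{H^{-k}}$. Via the standard duality $(H^{-k})^\ast \cong H^k$ induced by the $L^2(d\th)$ pairing, this realizes $R_\ph^\ast f$ as an element of $H^k$ with $\|R_\ph^\ast f\|_{H^k} \leq C\|f\|_{H^k}$.

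The main subtlety is the consistency of the various incarnations of $R_\ph^\ast$: it is originally defined as the Banach-space transpose of $R_\ph \colon H^n \to H^n$, identified with a map $H^{-n} \to H^{-n}$ via the $L^2(d\th)$ pairing, while the arguments above present its restrictions either as a pointwise multiplication operator (for nonnegative $k$) or through a dual pairing (for negative $k$). Since smooth functions are dense in each $H^k$ for $-n \leq k \leq n-1$ and all identifications use the single underlying $L^2(d\th)$ pairing, these constructions agree on a common dense subspace and hence by boundedness on the entire space, so the positive and negative arguments together cover the full stated range.
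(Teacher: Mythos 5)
Your proposal is correct and follows essentially the same route as the paper: the $L^2$ identity by change of variables, the multiplication lemma (Lem.~\ref{lem:sob_mult}) combined with the composition lemma (Lem.~\ref{Sobolev-composition}) for $0 \leq k \leq n-1$, and $L^2$-duality for $-n \leq k \leq 0$. The only differences are cosmetic — you phrase the positive-order case via $R_\ph^\ast(f) = R_{\ph\i}(f)\,(\ph\i)'$ where the paper works with $R_{\ph\i}^\ast$ directly, and you make explicit the consistency of the various incarnations of the transpose, which the paper leaves implicit.
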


\begin{proof}
For $-n \leq k \leq 0$, we obtain from Lem.\ \ref{Sobolev-composition} that $R_\ph$ is a map $R_\ph : H^{-k} \to H^{-k}$ and by $L^2$-duality we obtain that $R_\ph^\ast : H^k \to H^k$ as required.

Now let $0 \leq k \leq n-1$, $f \in H^k$ and $g \in H^n$. We replace $\ph$ by $\ph\i$ to simplify the formulas. By definition of the transpose
\begin{multline*}
\left\langle R_{\ph\i}^\ast f, g \right\rangle_{H^{-n}\x H^n}
= \left\langle f, R_{\ph\i}\, g \right\rangle_{H^{-n}\x H^n} = \\
= \int_{S^1} \left\langle f(\th),  g(\ph\i(\th)) \right\rangle\ud \th 
= \int_{S^1} \left\langle f(\ph(\th)), g(\th) \right\rangle \ph'(\th) \ud \th = \\
= \left\langle \left(R_\ph f\right) \ph', g\right\rangle_{H^{-p}\x H^p}\,.
\end{multline*}
Thus we obtain $R_{\ph\i}^\ast(f) = R_{\ph}(f)\, \ph'$ and using Lem.\ \ref{lem:sob_mult} we see that for $f \in H^k$ we also have $R_{\ph\i}^\ast(f) \in H^k$.
\end{proof}

\subsection{Notation}\label{notation:lesssim}

We will write
\[
f \lesssim_{A} g
\]
if there exists a constant $C > 0$, possibly depending on $A$, such that the inequality $f \leq C g$ holds.

\subsection{Gronwall Inequalities}

The following version of Gronwall's inequality can be found in \cite[Thm.\ 1.3.2]{Pachpatte1998} and \cite{Jones1964}.

\begin{theorem}
\label{thm:gronwall}
Let $A$, $\Ph$, $\Ps$ be real continuous functions defined on $[a,b]$ and $\Ph \geq 0$. We suppose that on $[a,b]$ we have the following inequality
\[
A(t) \leq \Ps(t) + \int_a^t A(s)\Ph(s) \ud s\,.
\]
Then
\[
A(t) \leq \Ps(t) + \int_a^t \Ps(s)\Ph(s) \exp\left(\int_s^t \Ph(u) \ud u \right) \ud s
\]
holds on $[a,b]$.
\end{theorem}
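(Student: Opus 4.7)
The plan is to reduce the integral inequality to a differential inequality for the auxiliary function $B(t) = \int_a^t A(s)\Ph(s)\ud s$, solve it using an integrating factor, and then feed the bound back into the original inequality.

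First, I would define $B(t) = \int_a^t A(s)\Ph(s)\ud s$, which is $C^1$ with $B(a)=0$ and $B'(t) = A(t)\Ph(t)$. The hypothesis $A(t) \le \Ps(t) + B(t)$ combined with $\Ph \ge 0$ yields the pointwise estimate
\[
B'(t) \le \Ps(t)\Ph(t) + B(t)\Ph(t).
\]

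Next, I would apply the integrating factor $\mu(t) = \exp\bigl(-\int_a^t \Ph(u)\ud u\bigr)$. Multiplying the differential inequality by $\mu(t) > 0$ gives
\[
\frac{\ud}{\ud t}\Bigl(\mu(t) B(t)\Bigr) \le \Ps(t)\Ph(t)\mu(t).
\]
Integrating from $a$ to $t$ and using $B(a) = 0$, I obtain
\[
\mu(t) B(t) \le \int_a^t \Ps(s)\Ph(s)\mu(s)\ud s,
\]
so that
\[
B(t) \le \int_a^t \Ps(s)\Ph(s)\exp\!\left(\int_s^t \Ph(u)\ud u\right)\ud s.
\]

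Finally, I would substitute this bound back into $A(t) \le \Ps(t) + B(t)$ to conclude the stated inequality. The step that needs a little care is justifying the differential manipulations under the stated continuity hypotheses (so that the integrating factor argument applies), but since $A$, $\Ps$, $\Ph$ are assumed continuous on $[a,b]$, $B$ is automatically $C^1$ and everything is classical. There is no real obstacle here; the only subtlety, if one wanted to weaken regularity to merely integrable $A$ and $\Ph$, would be to run the same argument in an absolutely continuous framework, but this is not needed for the statement at hand.
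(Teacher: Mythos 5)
Your proof is correct and complete: the reduction to the auxiliary function $B(t)=\int_a^t A(s)\Ph(s)\ud s$, the pointwise differential inequality obtained by multiplying the hypothesis by $\Ph(t)\geq 0$, and the integrating factor $\exp\bigl(-\int_a^t\Ph(u)\ud u\bigr)$ are exactly the standard argument for this form of Gronwall's lemma, and the continuity hypotheses make every step classical as you note. The paper itself does not prove this statement but simply cites it from Pachpatte and Jones, so there is nothing to compare against; your write-up supplies the missing proof in the expected way.
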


We will repeatedly use the following corollary.

\begin{corollary}
\label{cor:gronwall_applied}
Let $A$, $G$ be real, continuous functions on $[0,T]$  with $G\geq 0$ and $\al, \be$ non-negative constants. We suppose that on $[0,T]$ we have the inequality
\[
A(t) \leq A(0) + \int_0^t (\al + \be A(s)) G(s) \ud s\,.
\]
Then
\[
A(t) \leq A(0) + \left(\al + (A(0) + \al N) \be e^{\be N}\right) \int_0^t G(s) \ud s
\]
holds in $[0,T]$ with $N = \int_0^T G(t) \ud t$.
\end{corollary}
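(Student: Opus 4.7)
The plan is to reduce the corollary to a direct application of Theorem~\ref{thm:gronwall} by absorbing the constant part $\alpha$ into $\Psi$ and the factor $\beta$ into $\Phi$. Specifically, I would set
\[
\Psi(t) = A(0) + \alpha \int_0^t G(s)\ud s, \qquad \Phi(t) = \beta G(t),
\]
so that $\Phi \geq 0$ (using $\beta \geq 0$ and $G\geq 0$) and the hypothesis rewrites as $A(t) \leq \Psi(t) + \int_0^t A(s)\Phi(s)\ud s$.

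Then I would invoke Theorem~\ref{thm:gronwall} with $[a,b]=[0,T]$ to conclude
\[
A(t) \leq \Psi(t) + \int_0^t \Psi(s)\Phi(s)\exp\!\left(\int_s^t \Phi(u)\ud u\right) \ud s.
\]
The remaining work is to estimate the right-hand side using only $N = \int_0^T G(u)\ud u$. Since $\Psi$ is non-decreasing, $\Psi(s) \leq \Psi(T) \leq A(0) + \alpha N$ for $s \in [0,T]$, and $\int_s^t \Phi(u)\ud u \leq \beta N$, so the exponential is bounded by $e^{\beta N}$. This lets me pull the $\Psi$- and exponential-factors out of the integral to get
\[
\int_0^t \Psi(s)\Phi(s)\exp\!\left(\int_s^t \Phi(u)\ud u\right)\ud s \leq (A(0)+\alpha N)\, \beta e^{\beta N}\int_0^t G(s)\ud s.
\]

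Combining this with $\Psi(t) = A(0) + \alpha \int_0^t G(s)\ud s$ gives exactly the claimed inequality. There is no real obstacle here; the only thing to be careful about is using the monotonicity of $\Psi$ and the non-negativity of $\Phi$ to replace both $\Psi(s)$ and the exponential by their suprema on $[0,T]$, which is valid precisely because $\alpha,\beta \geq 0$ and $G\geq 0$.
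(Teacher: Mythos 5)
Your proposal is correct and is exactly the paper's argument: the paper's one-line proof applies Theorem~\ref{thm:gronwall} with the same choices $\Ps(t) = A(0) + \al \int_0^t G(s)\ud s$ and $\Ph(s) = \be G(s)$, and then uses $\int_s^t G(u)\ud u \leq N$ just as you do. You have merely written out the bounding of $\Ps(s)$ and the exponential factor, which the paper leaves implicit.
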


\begin{proof}
Apply the Gronwall inequality with $[a,b]=[0,T]$, $\Ps(t) = A(0) + \al \int_0^t G(s) \ud s$ and $\Ph(s) = \be G(s)$, and note that $G(s) \geq 0$ implies $\int_s^t G(u) \ud u \leq N$.
\end{proof}

\subsection{Poincar\'e Inequalities}

In the later sections it will be necessary to estimate the $H^k(ds)$-norm of a function by the $H^{n}(ds)$-norm with $k<n$, as well as the $L^\infty$-norm by the $H^k(ds)$-norm. In particular, we will need to know, how the curve $c$ enters into the estimates. The basic result is the following lemma, which is adapted from \cite[Lem.\ 18]{Mennucci2008}.

\begin{lemma}
\label{lem:poincare_basic}
Let $c \in \on{Imm}^2(S^1,\R^2)$ and $h:S^1\to \R^d$ be absolutely continuous. Then
\[
\sup_{\th \in S^1} \left| h(\th) - \frac 1{\ell_c} \int_{S^1} h \ud s \right| \leq \frac 12 \int_{S^1} |D_sh| \ud s\,.
\]
\end{lemma}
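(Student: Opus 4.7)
The plan is to control the oscillation of $h$ by bounding $|h(\th_1) - h(\th_0)|$ for arbitrary pairs of points on $S^1$, and then to average one of the arguments against arc-length measure to recover the mean $\bar h := \ell_c^{-1}\int_{S^1} h \ud s$.

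First, since $h$ is absolutely continuous on the circle, for any $\th_0, \th_1 \in S^1$ and either of the two oriented arcs $\ga_1, \ga_2$ joining $\th_0$ to $\th_1$, the fundamental theorem of calculus combined with the triangle inequality for vector integrals gives
\[
|h(\th_1) - h(\th_0)| \;=\; \left|\int_{\ga_i} D_s h \ud s \right| \;\leq\; \int_{\ga_i} |D_s h| \ud s \;=:\; I_i .
\]
Because $\ga_1$ and $\ga_2$ together traverse $S^1$ exactly once, one has $I_1 + I_2 = \int_{S^1}|D_s h| \ud s$. Choosing the shorter of the two arcs,
\[
|h(\th_1) - h(\th_0)| \;\leq\; \min(I_1, I_2) \;\leq\; \tfrac{1}{2}(I_1 + I_2) \;=\; \tfrac{1}{2}\int_{S^1}|D_s h| \ud s .
\]
This is the circle-specific input; it is what produces the constant $1/2$ rather than $1$ in the final inequality.

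Second, using the definition of $\bar h$, write
\[
h(\th_1) - \bar h \;=\; \frac{1}{\ell_c}\int_{S^1} \bigl(h(\th_1) - h(\th_0)\bigr) \ud s(\th_0) ,
\]
pull the norm inside by the triangle inequality, and insert the step-one bound (which is independent of $\th_0$):
\[
|h(\th_1) - \bar h| \;\leq\; \frac{1}{\ell_c}\int_{S^1} \tfrac{1}{2}\int_{S^1}|D_s h| \ud s \, \ud s(\th_0) \;=\; \tfrac{1}{2}\int_{S^1}|D_s h| \ud s .
\]
Taking the supremum over $\th_1 \in S^1$ finishes the proof.

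There is no real obstacle here: the argument is elementary and the constant $\tfrac{1}{2}$ arises naturally from the two-arc structure of the circle. The only points requiring a brief verification are that the fundamental theorem of calculus applies in the vector-valued absolutely continuous setting (standard) and that one may freely switch between $|D_s h| \ud s$ and $|h'|\ud \th$, which is justified since $c \in \on{Imm}^2$ embeds into $C^1$ with nowhere vanishing $c'$, so $|c'|$ and $|c'|\i$ are bounded continuous functions on $S^1$.
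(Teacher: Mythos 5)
Your proof is correct and follows essentially the same route as the paper: both obtain the constant $\tfrac12$ from the two complementary arcs of the circle (the paper via the signed identity $h(\th)-h(0)=\tfrac12\bigl(\int_0^\th h'-\int_\th^{2\pi}h'\bigr)$, you via $\min(I_1,I_2)\le\tfrac12(I_1+I_2)$, which amounts to the same bound) and then average the pointwise difference against $\ud s/\ell_c$ to replace one evaluation point by the mean. No gaps.
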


\begin{proof}
Since $h(0) = h(2\pi)$, the following equality holds,
\[
h(\th) - h(0) = \frac 12 \left( \int_0^\th h'(\si) \ud \si - \int_\th^{2\pi} h'(\si) \ud \si \right) \,,
\]
and hence after integration
\[
\frac 1{\ell_c} \int_{S^1} h \ud s - h(0) = \frac{1}{2\ell_c} \int_{S^1} \left( \int_0^\th h'(\si) \ud \si - \int_\th^{2\pi} h'(\si) \ud \si \right) \ud s\,.
\]
Next we take the absolute value
\begin{align*}
\left| \frac 1{\ell_c} \int_{S^1} h \ud s - h(0) \right| &\leq \frac{1}{2\ell_c} \int_{S^1} \left( \int_0^\th |h'(\si)| \ud \si + \int_\th^{2\pi} |h'(\si)| \ud \si \right) \ud s \\
&\leq \frac{1}{2\ell_c} \int_{S^1} |h'(\si)|\ud \si \int_{S^1} 1 \ud s = \frac 12 \int_{S^1} |D_sh| \ud s
\end{align*}
Now we replace 0 by an arbitrary $\th \in S^1$ and repeat the above steps.
 \end{proof}

This lemma permits us to prove the inequalities that we will use throughout the remainder of the paper.

\begin{lemma}
\label{lem:poincare}
Let $c \in \on{Imm}^2(S^1,\R^2)$ and $h \in H^2(S^1,\R^d)$. Then
\begin{itemize}
\item
$\| h\|_{L^\infty}^2 \leq \displaystyle\frac 2{\ell_c} \| h \|_{L^2(ds)}^2 + \displaystyle \frac {\ell_c} 2  \| D_s h \|_{L^2(ds)}^2\,,$
\item
$\| D_s h \|_{L^\infty}^2 \leq \displaystyle \frac {\ell_c}4 \| D_s^2 h \|_{L^2(ds)}^2\,,$
\item
$\| D_s h \|_{L^2(ds)}^2 \leq \displaystyle \frac {\ell_c^2}4 \| D_s^2 h \|_{L^2(ds)}^2\,.$
\end{itemize}
\end{lemma}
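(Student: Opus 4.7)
All three inequalities follow from Lemma \ref{lem:poincare_basic} combined with Cauchy--Schwarz. Write $\bar h := \ell_c\i \int_{S^1} h \ud s$ for the mean of $h$ with respect to arc-length.

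For the first inequality, I would start from the pointwise bound $|h(\th)| \leq |\bar h| + \tfrac12 \int_{S^1} |D_s h| \ud s$ supplied by Lemma \ref{lem:poincare_basic}. Cauchy--Schwarz in $L^2(\ud s)$ gives $|\bar h|^2 \leq \ell_c\i \|h\|_{L^2(\ud s)}^2$ and $\bigl(\int_{S^1} |D_s h| \ud s\bigr)^2 \leq \ell_c \|D_s h\|_{L^2(\ud s)}^2$. Squaring the pointwise bound, taking the supremum in $\th$, and applying the elementary inequality $(a+b)^2 \leq 2a^2 + 2b^2$ then produces the stated constants.

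For the second inequality, the key observation is that the mean of $D_s h$ vanishes: by periodicity of $h$,
\[
\int_{S^1} D_s h \ud s = \int_0^{2\pi} h'(\th) \ud \th = h(2\pi) - h(0) = 0.
\]
Applying Lemma \ref{lem:poincare_basic} to $D_s h$ therefore yields $\|D_s h\|_{L^\infty} \leq \tfrac12 \int_{S^1} |D_s^2 h| \ud s$, and Cauchy--Schwarz on the right-hand side gives the factor $\ell_c/4$ as claimed. The third inequality is then immediate from the trivial bound $\|D_s h\|_{L^2(\ud s)}^2 \leq \ell_c \|D_s h\|_{L^\infty}^2$ combined with the second.

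There is no real obstacle here; the only noteworthy point is the vanishing of the mean of $D_s h$, which is what produces the sharp constant $\tfrac14$ in (ii) and (iii) rather than the weaker $\tfrac12$ one would obtain by treating $D_s h$ as a generic function and keeping the mean term in the first inequality.
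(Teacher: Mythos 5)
Your proposal is correct and follows essentially the same route as the paper: Lemma \ref{lem:poincare_basic} plus Cauchy--Schwarz and $(a+b)^2\leq 2a^2+2b^2$ for the first inequality, the vanishing of $\int_{S^1} D_s h \ud s$ for the second, and the trivial $L^2$--$L^\infty$ comparison for the third. The only cosmetic difference is that you bound the mean term by $|\bar h|$ where the paper uses $\ell_c\i\int_{S^1}|h|\ud s$; both yield the same constants.
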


\begin{proof}
From Lem.\ \ref{lem:poincare_basic} we obtain the inequality
\[
\| h \|_{L^\infty} \leq \frac 1{\ell_c} \int_{S^1} |h| \ud s + \frac 12 \int_{S^1} |D_s h| \ud s\,.
\]
Next we use $(a+b)^2 \leq 2a^2 + 2b^2$ and Cauchy-Schwarz in
\begin{align*}
\| h \|^2_{L^\infty} &\leq \frac 2{\ell_c^2} \left(\int_{S^1} |h| \ud s\right)^2
+ \frac 12 \left( \int_{S^1} |D_s h| \ud s\right)^2 \\
&\leq \frac 2{\ell_c} \left( \int_{S^1} |h|^2 \ud s \right) 
+ \frac{\ell_c}{2} \left( \int_{S^1} |D_s h|^2 \ud s \right)\,,
\end{align*}
thus proving the first statement. To prove the second statement we note that $\int_{S^1} D_s h \ud s = 0$ and thus by Lem.\ \ref{lem:poincare_basic}
\[
\| D_s h \|_{L^\infty} \leq \frac 12 \int_{S^1} |D_s^2 h| \ud s\,.
\]
Hence
\[
\| D_s h \|^2_{L^\infty} \leq \frac 14  \left(\int_{S^1} |D_s^2 h| \ud s\right)^2
\leq \frac{\ell_c}{4} \| D_s^2 h\|^2_{L^2(ds)}\,.
\]
To prove the third statement we estimate
\[
\| D_s h\|^2_{L^2(ds)} \leq \| D_s h \|^2_{L^\infty} \int_{S^1} 1 \ud s
\leq \frac{\ell_c^2}4 \| D_s^2 h\|^2_{L^2(ds)}\,.
\]
This completes the proof.
\end{proof}

The next lemma allows us to estimate the $H^k(ds)$-norm using a combination of the $L^2(ds)$- and the $H^n(ds)$-norms, without introducing constants that depend on the curve.

\begin{lemma}
\label{lem:poincare2}
Let $n\geq 2$, $c \in \on{Imm}^n(S^1,\R^2)$ and $h \in H^n(S^1,\R^d)$. Then for $0 \leq k \leq n$,
\[
\| D_s^k h \|^2_{L^2(ds)} \leq \| h\|^2_{L^2(ds)} + \| D_s^n h \|^2_{L^2(ds)}\,.
\]
\end{lemma}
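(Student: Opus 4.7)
\medskip
\noindent\textbf{Proof plan.}
The plan is to show that the sequence $f_k := \|D_s^k h\|_{L^2(\ud s)}^2$ for $0\leq k\leq n$ is log-convex, and then interpolate between the endpoints $f_0$ and $f_n$. The key observation is that $D_s$ is skew-adjoint on $L^2(\ud s)$: since $\ud s = |c'|\,\ud\th$ and $D_s = |c'|\i\p_\th$, one has
$$\int_{S^1}\langle D_s f,g\rangle\,\ud s = \int_{S^1}\langle \p_\th f,g\rangle\,\ud\th = -\int_{S^1}\langle f,\p_\th g\rangle\,\ud\th = -\int_{S^1}\langle f,D_s g\rangle\,\ud s\,,$$
with no boundary contribution because $S^1$ is closed.

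For $1 \leq k \leq n-1$ I would integrate by parts once and apply Cauchy--Schwarz to get
$$f_k = \int_{S^1}\langle D_s^k h, D_s^k h\rangle\,\ud s = -\int_{S^1}\langle D_s^{k-1}h, D_s^{k+1}h\rangle\,\ud s \leq \sqrt{f_{k-1}\,f_{k+1}}\,,$$
so that $f_k^2 \leq f_{k-1}\,f_{k+1}$ and the sequence $(\log f_k)_{k=0}^n$ is convex. Convexity then forces $\log f_k \leq (1-k/n)\log f_0 + (k/n)\log f_n$, equivalently $f_k \leq f_0^{1-k/n}\, f_n^{k/n}$, and a final application of Young's inequality $u^\al v^{1-\al} \leq \al u + (1-\al)v$ (valid for $u,v\geq 0$ and $0\leq\al\leq 1$) gives
$$f_k \leq \tfrac{n-k}{n}\, f_0 + \tfrac{k}{n}\, f_n \leq f_0 + f_n\,,$$
as required.

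I do not expect any substantive obstacle. The only points that need care are the degenerate cases $f_0 = 0$ or $f_n = 0$, which are absorbed by the usual $\log 0 = -\infty$ convention and are in fact forced to be consistent by the Cauchy--Schwarz step itself, and the verification that $D_s^k h \in L^2(\ud s)$ for every $0\leq k\leq n$ so that the integration by parts is legitimate. The latter follows from $h \in H^n(S^1,\R^d)$ together with the Sobolev calculus for $D_s$ along Sobolev immersions developed in Sect.~\ref{sec:Sobolev_immersions}.
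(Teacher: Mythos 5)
Your proof is correct, but it takes a genuinely different route from the paper's. The paper exploits reparametrization invariance of the $L^2(ds)$-quantities to reduce to a constant-speed curve, expands $h$ in a Fourier basis, and verifies the inequality coefficientwise using the scalar bound $a^k \leq 1 + a^n$ for $a \geq 0$. You instead prove log-convexity of $k \mapsto \|D_s^k h\|_{L^2(ds)}^2$ directly via one integration by parts (using that $D_s$ is skew-adjoint for $ds$ on the closed circle) and Cauchy--Schwarz, obtain the multiplicative interpolation bound $\|D_s^k h\|_{L^2(ds)} \leq \|h\|_{L^2(ds)}^{1-k/n}\|D_s^n h\|_{L^2(ds)}^{k/n}$, and finish with Young's inequality. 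Note that the intermediate inequality you establish is exactly Lem.~\ref{lem:poincare3}, which the paper states without proof and attributes to the Gagliardo--Nirenberg circle of ideas; so your argument in fact proves both lemmas at once, is coordinate-free, and avoids the Fourier machinery. What the paper's route buys in exchange is a very explicit picture of where the constant $1$ comes from (it is the termwise bound $a^k \leq 1 + a^n$ on each frequency), and the constant-speed reduction it uses is a technique the paper reuses elsewhere (e.g.\ in Lem.~\ref{lem:Lc_inv_smooth}). Your handling of the degenerate cases and of the regularity needed for the integration by parts ($D_s^{k\pm 1}h \in L^2(ds)$ for $1 \leq k \leq n-1$, guaranteed by $h \in H^n$ and Lem.~\ref{lem:Ds_smooth_sobolev}) is adequate.
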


\begin{proof}
Let us write $D_c$ and $L^2(c)$ for $D_s$ and $L^2(ds)$ respectively to emphasize the dependence on the curve $c$. Since $\left\|D_c^k h\right\|_{L^2(c)} = \left\|D_{c\o\ph}^k (h\o\ph)\right\|_{L^2(c\o\ph)}$, we can assume that $c$ has a constant speed parametrization, i.e. $|c'| = \ell_c/2\pi$. The inequality we have to show is
\[
\int_{0}^{2\pi} \left( \frac{2\pi}{\ell_c}\right)^{2k-1} \big|h^{(k)}(\th)\big|^{2} \ud \th \leq
\int_{0}^{2\pi} \frac{\ell_c}{2\pi} \left|h(\th)\right|^2  + \left( \frac{2\pi}{\ell_c}\right)^{2n-1} \big|h^{(n)}(\th)\big|^{2} \ud \th\,.
\]
Let $\ph(x) = \frac{2\pi}{\ell_c} x$. After a change of variables this becomes
\begin{equation}
\label{eq:ineq1}
\int_{0}^{\ell_c} \big|(h\o\ph)^{(k)}(x)\big|^{2} \ud x \leq
\int_{0}^{\ell_c} \left|h\o\ph(x)\right|^2  + \big|(h\o\ph)^{(n)}(x)\big|^{2} \ud x\,.
\end{equation}
Let $f = h\o\ph$ and assume w.l.o.g. that $f$ is $\R$-valued. 
Define $f_k(x) = \ell_c^{-1/2} \exp\left(i\frac{2\pi k}{\ell_c}x\right)$, 
which is an orthonormal basis of $L^2([0,\ell_c], \R)$. 
Then $f = \sum_{k \in \mathbb Z} \wh f(k) f_k$ and \eqref{eq:ineq1} becomes
\[
\sum_{k \in \mathbb Z} \left(\tfrac {2\pi k}{\ell_c}\right)^{2k} \big| \wh f(k) \big|^2
\leq \sum_{k \in \mathbb Z} \left[ 1 + \left(\tfrac {2\pi k}{\ell_c}\right)^{2n}\right] \big| \wh f(k) \big|^2\,.
\]
Since for $a \geq 0$ we have the inequality $a^k \leq 1+a^n$, the last inequality is satisfied, thus concluding the proof.
\end{proof}

An alternative way to estimate the $H^k(ds)$-norm is given by the following lemma, which is the 
periodic version of the Gagliardo-Nirenberg inequalities (see \cite{Nirenberg1959}). 

\begin{lemma}
\label{lem:poincare3}
Let $n\geq 2$, $c \in \on{Imm}^n(S^1,\R^2)$ and $h \in H^n(S^1,\R^d)$. Then for $0 \leq k \leq n$,
\[
\| D_s^k h \|_{L^2(ds)} \leq \| h\|^{1-k/n}_{L^2(ds)} \, \| D_s^n h \|^{k/n}_{L^2(ds)}\,.
\]

If $c \in \on{Imm}^2(S^1,\R^2)$, the inequality also holds for $n=0,1$.
\end{lemma}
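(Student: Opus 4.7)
The plan is to reduce to the constant-speed parametrization exactly as in the proof of Lem.\ \ref{lem:poincare2}, pass to Fourier series, and finish by H\"older's inequality. The edge cases $k=0$ and $k=n$ are trivial (both sides agree), and the case $n\in\{0,1\}$ is trivial for the same reason since $0\le k\le n$ leaves only the endpoint values. So we may assume $n\ge 2$ and $0<k<n$.

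First I would exploit reparametrization invariance: for any $C^1$-diffeomorphism $\ph$ of $S^1$ we have $\|D_{c\o\ph}^j(h\o\ph)\|_{L^2(ds_{c\o\ph})}=\|D_c^j h\|_{L^2(ds_c)}$, so I may replace $c$ by its constant-speed reparametrization with $|c'|=\ell_c/2\pi$. Changing variable via $\ph(x)=(2\pi/\ell_c)x$ and setting $f=h\o\ph$, the claim becomes
\[
\int_0^{\ell_c}|f^{(k)}(x)|^2\ud x \;\le\; \Bigl(\int_0^{\ell_c}|f(x)|^2\ud x\Bigr)^{1-k/n}\Bigl(\int_0^{\ell_c}|f^{(n)}(x)|^2\ud x\Bigr)^{k/n}
\]
for $f\in H^n$ with periodic boundary values, which I may assume $\R$-valued componentwise.

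Next I would expand $f$ in the orthonormal basis $f_m(x)=\ell_c^{-1/2}\exp(i\,2\pi m x/\ell_c)$, so that by Parseval
\[
\|f^{(j)}\|_{L^2([0,\ell_c])}^2 \;=\; \sum_{m\in\mathbb Z}\lambda_m^{\,j}\,a_m, \qquad \lambda_m:=\Bigl(\tfrac{2\pi m}{\ell_c}\Bigr)^2,\ a_m:=|\wh f(m)|^2\ge 0.
\]
The inequality to prove reduces to the purely numerical statement
\[
\sum_{m\in\mathbb Z}\lambda_m^{k}a_m \;\le\; \Bigl(\sum_m a_m\Bigr)^{\!1-k/n}\Bigl(\sum_m \lambda_m^{n} a_m\Bigr)^{\!k/n}.
\]

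For $0<k<n$ I would write $\lambda_m^{k}a_m=(\lambda_m^{n}a_m)^{k/n}\cdot a_m^{(n-k)/n}$ and apply H\"older's inequality with conjugate exponents $p=n/k$ and $q=n/(n-k)$; this yields exactly the desired bound. The boundary cases $k\in\{0,n\}$ are immediate identities.

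I do not expect a genuine obstacle: the only thing to verify is that the reduction to constant speed is legitimate for $c\in\on{Imm}^n(S^1,\R^2)$, which follows because $c'$ is continuous and nowhere zero (so the reparametrization is a $C^1$-diffeomorphism) and because composition with such a diffeomorphism preserves $H^n$ by Lem.\ \ref{Sobolev-composition}. The rest is a standard Fourier plus H\"older argument.
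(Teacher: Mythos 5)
Your proof is correct. Note that the paper itself gives no proof of this lemma at all: it simply labels it the periodic version of the Gagliardo--Nirenberg inequalities and cites Nirenberg's 1959 paper. Your argument is the natural self-contained substitute, and it deliberately tracks the paper's own proof of Lem.~\ref{lem:poincare2}: the same reduction to a constant-speed parametrization (legitimate since $c'\in H^{n-1}\hookrightarrow C^0$ is nowhere zero, so the reparametrization lies in $\mc D^n(S^1)$ and Lem.~\ref{Sobolev-composition} applies), the same rescaling and expansion in the basis $f_m(x)=\ell_c^{-1/2}\exp(i\,2\pi mx/\ell_c)$, after which the only change is to replace the pointwise bound $a^k\le 1+a^n$ used there by H\"older's inequality with exponents $n/k$ and $n/(n-k)$ applied to the factorization $\lambda_m^k a_m=(\lambda_m^n a_m)^{k/n}a_m^{(n-k)/n}$; the exponents are conjugate since $k/n+(n-k)/n=1$, and this yields exactly the multiplicative bound, which is what buys the sharper (constant-free, product-form) estimate over Lem.~\ref{lem:poincare2}. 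The one detail worth writing out is the vector-valued case: if you reduce to real-valued components, the componentwise inequalities
\[
\|D_s^k h_i\|^2_{L^2(ds)}\le A_i^{1-k/n}B_i^{k/n},\qquad A_i=\|h_i\|^2_{L^2(ds)},\ B_i=\|D_s^n h_i\|^2_{L^2(ds)},
\]
must be recombined with one further application of H\"older, $\sum_i A_i^{1-k/n}B_i^{k/n}\le\bigl(\sum_i A_i\bigr)^{1-k/n}\bigl(\sum_i B_i\bigr)^{k/n}$; alternatively, run the Fourier argument directly with $a_m=|\wh f(m)|^2$ for $\wh f(m)\in\mathbb C^d$, in which case nothing changes. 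Your dismissal of the endpoint and low-order cases is also fine, since for $k\in\{0,n\}$ and for $n\in\{0,1\}$ the asserted inequality degenerates to an identity. No gap.
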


\subsection{The Geodesic Equation on Weak Riemannian Manifolds}

Let $V$ be a convenient vector space, $M \subseteq V$ an open subset and $G$ a possibly weak Riemannian metric on $M$. Denote by $\bar{L} : TM \to (TM)'$ the canonical map defined by
\[
G_c(h,k) = \langle \bar{L}_c h, k \rangle_{TM}\,,
\]
with $c \in M$, $h,k \in T_c M$ and with $\langle \cdot,\cdot \rangle_{TM}$ denoting the canonical pairing between $(TM)'$ and $TM$. We also define $H_c(h,h) \in (T_cM)'$ via
\[
D_{c,m} G_c(h,h) = \langle H_c(h,h), m \rangle_{TM}\,,
\]
with $D_{c,m}$ denoting the directional derivative at $c$ in direction $m$. 
In fact $H$ is a smooth map
\[
H : TM \to (TM)'\,,\quad (c,h) \mapsto (c, H_c(h,h))\,.
\]
With these definitions we can state how to calculate the geodesic equation.

\begin{lemma}
\label{lem:convenient_geod_eq}
The geodesic equation -- or equivalently the Levi-Civita covariant derivative -- on $(M,G)$ exists if and only if $\tfrac 12 H_c(h,h) - \left(D_{c,h} \bar{L}_c\right)\!(h)$ is in the image of $\bar{L}_c$ for all $(c,h) \in TM$ and the map
\[
TM \to TM\,,\quad (c,h) \mapsto \bar{L}_c\i \left(\tfrac 12 H_c(h,h) - \left(D_{c,h} \bar{L}_c\right)\!(h)\right)
\]
is smooth. In this case the geodesic equation can be written as
\[
\begin{aligned}
c_t &= \bar{L}_c\i p \\
p_t &= \frac12 H_c(c_t,c_t)
\end{aligned}
\qquad\text{ or }\qquad
c_{tt} = \frac12 \bar L_c\i\left(H_c(c_t,c_t) - \left(\p_t\bar L_c\right)\!(c_t)\right)\,.
\]
\end{lemma}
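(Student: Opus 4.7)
The plan is to derive the geodesic equation from the variational principle applied to the energy functional, which is the standard path and matches the structure of the statement (the expression inside $\bar L_c^{-1}$ is exactly what arises as the Euler--Lagrange term). The only delicate point is that since $G$ is only a weak metric, the inverse $\bar L_c^{-1}$ is not automatically defined on the cotangent space; it is defined only where it can be, and the existence of a geodesic spray is equivalent to being able to solve for the acceleration. The statement in the lemma packages precisely this.

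First I would fix a smooth one-parameter variation $(t,s) \mapsto c(t,s)$ of a curve $c(t)=c(t,0)$ in $M$ with variation field $m(t) = \p_s c(t,s)|_{s=0}$ vanishing at the endpoints, and differentiate the energy
\[
E(c(\cdot,s)) = \tfrac12 \int_a^b G_{c(t,s)}(\p_t c(t,s),\p_t c(t,s))\ud t
\]
at $s=0$. Because $M$ is open in a convenient vector space $V$, the partial derivatives $\p_s$ and $\p_t$ commute on $c(t,s)$, so $\p_s c_t|_{s=0} = m_t$. Expanding,
\[
\tfrac{\ud}{\ud s}\big|_{0} E = \tfrac12\int_a^b \bigl(D_{c,m}G_c(c_t,c_t) + 2G_c(m_t,c_t)\bigr)\ud t\,.
\]
Using the definitions of $\bar L_c$ and $H_c$ the integrand becomes $\tfrac12\langle H_c(c_t,c_t),m\rangle_{TM} + \langle \bar L_c c_t,m_t\rangle_{TM}$.

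Second, I would integrate the second term by parts in $t$. Since $m(a)=m(b)=0$,
\[
\int_a^b \langle \bar L_c c_t, m_t\rangle_{TM}\ud t = -\int_a^b \bigl\langle \p_t(\bar L_c c_t), m\bigr\rangle_{TM}\ud t\,,
\]
so the critical-point condition, required to hold for all admissible $m$, is
\[
\p_t(\bar L_c c_t) = \tfrac12 H_c(c_t,c_t)\,.
\]
Setting $p:=\bar L_c c_t$ yields the first-order Hamiltonian form in the statement. Expanding $\p_t(\bar L_c c_t) = (\p_t \bar L_c)(c_t) + \bar L_c c_{tt}$ and solving for $c_{tt}$ yields the second-order form, provided the right-hand side lies in the image of $\bar L_c$ and depends smoothly on $(c,c_t)$; substituting $h=c_t$ (so $\p_t\bar L_c = D_{c,h}\bar L_c$) this is exactly the stated condition.

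Third, I would note the converse and the equivalence with the Levi-Civita connection. Since the geodesic equation defines a smooth second-order vector field (geodesic spray) on $TM$ exactly when $(c,h) \mapsto \bar L_c^{-1}\bigl(\tfrac12 H_c(h,h) - (D_{c,h}\bar L_c)(h)\bigr)$ is well-defined and smooth, this is the same as saying the Levi-Civita covariant derivative exists: the map $\nabla_h h := -\bar L_c^{-1}\bigl(\tfrac12 H_c(h,h) - (D_{c,h}\bar L_c)(h)\bigr)$ extends by polarization to a torsion-free, metric connection, and conversely, the existence of any such connection forces the image and smoothness condition. The only subtle step is this last polarization/uniqueness observation; the rest is a direct calculation. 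I do not expect real obstacles here — the work is bookkeeping and using commutativity of $\p_s, \p_t$ — the lemma is essentially the weak-Riemannian counterpart of the classical derivation, and the statement has been carefully crafted so that the conclusion reads off the Euler--Lagrange equation with no further analytic input.
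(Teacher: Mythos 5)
Your variational derivation is correct and is exactly the standard argument that the paper itself defers to by citation (\cite[2.4.1]{Bauer2013d_preprint} and \cite[Sect.~2.4]{Micheli2013}); the paper supplies no independent proof, so your route and the intended one coincide. One small point your computation actually exposes: solving $\p_t(\bar L_c c_t)=\tfrac12 H_c(c_t,c_t)$ for $c_{tt}$ gives $c_{tt}=\bar L_c\i\bigl(\tfrac12 H_c(c_t,c_t)-(\p_t\bar L_c)(c_t)\bigr)$, which matches the lemma's image/smoothness condition but shows that in the displayed second-order form the factor $\tfrac12$ should multiply only $H_c(c_t,c_t)$, not the whole bracket.
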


This lemma is an adaptation of the result given in \cite[2.4.1]{Bauer2013d_preprint} and the same proof can be repeated; see also \cite[Sect.\ 2.4]{Micheli2013}.

\section{Sobolev Metrics with Constant Coefficients}

In this paper we will consider \emph{Sobolev-type metrics} with constant coefficients. These are metrics of the form
\[
G_c(h,k) = \int_{S^1} \sum_{j=0}^n a_j \langle D_s^j h, D_s^j k \rangle \ud s\,,
\]
with $a_j\geq 0$ and $a_0, a_n \neq 0$. We call $n$ the \emph{order} of the metric. The metric can be defined either on the space $\on{Imm}(S^1,\R^2)$ of ($C^\infty$-)smooth immersions or for $p \geq n$ on the spaces $\on{Imm}^p(S^1,\R^2)$ of Sobolev $H^p$-immersions.

\subsection{The Space of Smooth Immersions}

Let us first consider $G$ on the space of smooth immersions. The metric can be represented via the associated family of operators, $L$, which are defined by
\[
G_c(h,k) = \int_{S^1} \langle L_c h, k \rangle \ud s = \int_{S^1} \langle h, L_c k \rangle \ud s\,,
\]
The operator $L_c : T_c \on{Imm}(S^1,\R^2) \to T_c \on{Imm}(S^1,\R^2)$ for a Sobolev metric with constant coefficients can be calculated via integration by parts and is given by
\[
L_c h = \sum_{j=0}^n (-1)^j a_j D_s^{2j} h\,.
\]
The operator $L_c$ is self-adjoint, positive and hence injective. Since $L_c$ is elliptic, it is 
Fredholm $H^k\to H^{k-2n}$ with vanishing index and thus surjective. Furthermore its inverse is smooth as well. 
We want to distinguish between the operator $L_c$ and the canonical embedding from $T_c\on{Imm}$ into $(T_c \on{Imm})'$, 
which we denote by $\bar{L}_c$. They are related via
\[
\bar{L}_c h = L_c h \otimes \ud s = L_c h \otimes |c'| \ud \th\,.
\]
Later we will simply write $\bar{L}_c h = L_c h\,|c'|$, especially when the order of multiplication and differentiation becomes important in Sobolev spaces.

\subsection{The Space of Sobolev Immersions}
\label{sec:Sobolev_immersions}

Assume $n \geq 2$ and let $G$ be a Sobolev metric of order $n$. We want to extend $G$ from the 
space $\on{Imm}(S^1,\R^2)$ to a smooth metric on the Sobolev-completion $\on{Imm}^n(S^1,\R^2)$. 
First we have to look at the action of the arc-length derivative and its transpose (with respect to 
$H^0(d\th)$) on Sobolev 
spaces. Remember that we always use the $H^n(d\th)$-norm on Sobolev completions.   
We can write $D_s$ as the composition $D_s = \tfrac{1}{|c'|} \o \p_\th$, where $\tfrac{1}{|c'|}$ is 
interpreted as the multiplication operator $f \mapsto \tfrac{1}{|c'|} f$. Its transpose is 
$D_s^\ast = \p_\th^\ast \o \left(\tfrac{1}{|c'|}\right)^\ast = -\p_\th \o \tfrac{1}{|c'|}$. These 
operators are smooth in the following sense.   

\begin{lemma}
\label{lem:Ds_smooth_sobolev}
Let $n\geq 2$ and $k \in \mathbb Z$ with $|k| \leq n-1$. Then the maps
\begin{align*}
D_s &: \on{Imm}^n(S^1,\R^2) \x H^{k+1}(S^1,\R^d) \to H^k(S^1,\R^d)\,,\quad
(c,h) \mapsto D_s h = \tfrac{1}{|c'|} h' \\
D_s^\ast &: \on{Imm}^n(S^1,\R^2) \x H^{k}(S^1,\R^d) \to H^{k-1}(S^1,\R^d)\,,\quad
(c, h)  \mapsto  D_s^\ast h = -\left(\tfrac 1{|c'|} h\right)' 
\end{align*}
are smooth.
\end{lemma}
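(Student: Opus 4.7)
The plan is to factor both $D_s$ and $D_s^\ast$ as compositions of three smooth operations: the map $c \mapsto |c'|\i$ valued in $H^{n-1}$, the continuous linear differentiation $\p_\th$, and pointwise multiplication. Granted the first map, the assembly is routine and the range $|k|\le n-1$ is exactly what is needed for the estimates to line up.

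The core of the proof is to show that $c \mapsto |c'|\i$ is smooth from $\on{Imm}^n(S^1,\R^2)$ into $H^{n-1}(S^1,\R)$. The map $c \mapsto c'$ is continuous linear from $H^n$ to $H^{n-1}$. Since $n-1\ge 1$, Lem.\ \ref{lem:sob_mult} (applied with the lemma's $n$ equal to $n-1$ and its $k$ equal to $n-1$) shows that $u \mapsto \langle u,u \rangle$ is bilinear bounded on $H^{n-1}$, so $c \mapsto |c'|^2$ is smooth into $H^{n-1}$. For $n \geq 2$ the Sobolev embedding $H^{n-1} \hookrightarrow C^0$ is available, and the immersion condition together with this embedding ensures $|c'|^2 \geq \ep > 0$ in $C^0$ for some $\ep$, with the lower bound stable under $H^n$-small perturbations of $c$. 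On the corresponding $H^{n-1}$-open subset, composition with the smooth real function $x \mapsto 1/\sqrt{x}$ defines a smooth superposition (Nemytskii) operator into $H^{n-1}$, yielding smoothness of $c \mapsto |c'|\i$.

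With this in hand, for $D_s$ one writes $D_s h = |c'|\i \cdot h'$: differentiation $h \mapsto h'$ is continuous linear $H^{k+1} \to H^k$, and by Lem.\ \ref{lem:sob_mult} (again with the lemma's $n$ taken to be $n-1$) pointwise multiplication $H^{n-1} \x H^k \to H^k$ is bounded bilinear exactly in the range $|k|\le n-1$ assumed in the statement. Composition of these smooth maps gives smoothness of $D_s$. For $D_s^\ast h = -\p_\th(|c'|\i h)$ the same ingredients appear, with the order of multiplication and differentiation reversed: multiply first to land in $H^k$, then differentiate into $H^{k-1}$.

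The main obstacle I anticipate is the Nemytskii step for $c \mapsto 1/\sqrt{|c'|^2}$. It is folklore but does require some care: one must invoke the theorem that composition with a smooth scalar function is a smooth map on the corresponding open subset of a Sobolev space, which itself rests on the Banach-algebra property of $H^{n-1}$ and on the Faà di Bruno-type expansion of iterated derivatives. The hypothesis $n \geq 2$ does double duty here, providing both the Sobolev embedding $H^{n-1}\hookrightarrow C^0$ that stabilises the lower bound on $|c'|$ and the algebra structure that makes the superposition map smooth; both facts fail at $n=1$, which is consistent with the geodesic incompleteness phenomena discussed in the introduction.
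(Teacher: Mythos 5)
Your proof is correct and follows essentially the same route as the paper: establish smoothness of $c \mapsto |c'|\i$ into $H^{n-1}(S^1,\R)$ and then assemble $D_s$ and $D_s^\ast$ from linear differentiation and the multiplication Lem.~\ref{lem:sob_mult} with $|k|\le n-1$. The only (harmless) difference is that you reach $|c'|\i$ via $\langle c',c'\rangle$ followed by $x\mapsto x^{-1/2}$, whereas the paper factors through $c\mapsto |c'|$ followed by inversion on the open set $\{f>0\}\subset H^{n-1}$; both rest on the same superposition-operator fact that you correctly identify as the crux.
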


\begin{proof}
For $n\geq 2$, the map $c \mapsto \tfrac{1}{|c'|}$ is the composition of the following smooth maps,
\[
\begin{array}{ccccc}
\on{Imm}^n(S^1,\R^2) & \to &
\{ f : f > 0 \} \subset H^{n-1}(S^1,\R) & \to &
H^{n-1}(S^1,\R) \\
c & \mapsto & |c'| & \mapsto & \tfrac{1}{|c'|}
\end{array}\,.
\]
Since $\tfrac{1}{|c'|} \in H^{n-1}(S^1,\R^2)$, Lem.\ \ref{lem:sob_mult}. concludes the proof.
\end{proof}

Using Lem.\ \ref{lem:Ds_smooth_sobolev} we see that
\[
G_c(h,h) = \int_{S^1} \sum_{k=0}^n a_k \langle D_s^k h, D_s^k h \rangle \ud s
\]
is well-defined for $(c,h) \in T\on{Imm}^n(S^1,\R^2)$. As the tangent bundle is isomorphic to $T\on{Imm}^n(S^1,\R^2) \cong \on{Imm}^n(S^1,\R^2) \x H^n(S^1,\R^2)$, we can also write the metric as
\[
G_c(h,h) = \left\langle \sum_{k=0}^n a_k\, (D_s^k)^\ast\, |c'|\, D_s^k h, h \right\rangle_{H^{-n}\x H^n}\,.
\]
Again we note that $|c'|$ has to be interpreted as the multiplication operator $f \mapsto |c'| \,f$ on the spaces $H^k$ with $|k| \leq n-1$. Thus the operator $\bar{L}_c:H^n \to H^{-n}$ is given by
\begin{equation*}
\bar{L}_c = \sum_{k=0}^n a_k\, (D_s^k)^\ast \o |c'| \o  D_s^k\,.
\end{equation*}
While it is tempting to ``simplify'' the expression for $\bar{L}_c$ using the identity
\[
D_s^\ast \o |c'| = -|c'| \o D_s\,,
\]
one has to be careful, since the identity is only valid, when interpreted as an operator $H^k \to H^{k-1}$ with $-n+2 \leq k \leq n-1$. The left hand side however makes sense also for $k=-n+1$. Thus we have the operator
\[
(D_s^n)^\ast \o |c'| : L^2 \to H^{-n}\,,
\]
but the domain has to be at least $H^1$ for the operator
\[
(-1)^n\, |c'| \o D_s^n : H^1 \to H^{-n+1}\,.
\]
So the expression
\[
\bar{L}_c h = \sum_{k=0}^n (-1)^k a_k\, |c'|\,D_s^{2k} h \,,
\]
is only valid, when we restrict $\bar{L}_c$ to $H^{n+1}$, i.e., $\bar{L}_c : H^{n+1} \to H^{-n+1}$.

\subsection{The Geodesic Equation}

By Lem.\ \ref{lem:convenient_geod_eq}, we need to calculate $H_c(h,h)$. This is achieved in the following lemma.

\begin{lemma}
Let $n \geq 2$ and let $G$ be a Sobolev metric of order $n$. On $\on{Imm}^n(S^1,\R^2)$ we have
\begin{equation}
\label{eq:H_gradient_Sobolev}
H_c(h,h) = -a_0\, |c'|\, D_s\left( \langle h, h \rangle v \right)
- \sum_{k=1}^n \sum_{j=1}^{2k-1} (-1)^{k+j} a_k\, D_s^\ast \o
\left( |c'| \langle D_s^{2k-j} h, D_s^j h \rangle v \right)\,.
\end{equation}
On $\on{Imm}^p(S^1,\R^2)$ with $p \geq n+1$ as well as $\on{Imm}(S^1,\R^2)$ we have the equivalent expression,
\begin{align*}
H_c(h,h) &=\Bigg( -2 \langle L_c h, D_s h \rangle v - a_0 \langle h, h \rangle \ka n+{}\\
&\qquad{}+ \sum_{k=1}^n \sum_{j=1}^{2k-1} (-1)^{k+j} a_k \langle D_s^{2k-j} h, D_s^j h \rangle \ka n \Bigg) \otimes \ud s\,.
\end{align*}
\end{lemma}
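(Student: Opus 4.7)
The plan is to compute $D_{c,m} G_c(h,h)$ directly and then recognise the result as a pairing with $m$ via $\ud\th$. Write $F_j = D_s^j h$, $\xi = \langle D_s m, v\rangle$, and note the three ingredients from Section~\ref{variational-formulae}: $D_{c,m}|c'| = |c'|\xi$, so $D_{c,m}\ud s = \xi\ud s$; $D_{c,m} F_0 = 0$; and Lemma~\ref{lem:Ds_rmap} gives the recurrence $D_{c,m} F_{j+1} = D_s(D_{c,m} F_j) - \xi F_{j+1}$. By induction this yields
\[
D_{c,m} F_j = -\sum_{i=1}^{j} D_s^{j-i}\left(\xi F_i\right),\qquad j\ge 1.
\]

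Next I differentiate under the integral:
\[
D_{c,m} G_c(h,h) = \sum_{j=0}^n a_j\!\int_{S^1}\!\!\Big(2\langle D_{c,m}F_j, F_j\rangle + \xi|F_j|^2\Big)\ud s.
\]
Integration by parts for $D_s$ against $\ud s$ moves the arc-length derivatives off $\xi F_i$ onto $F_j$, turning the inner product $\langle D_s^{j-i}(\xi F_i), F_j\rangle$ into $(-1)^{j-i}\xi\langle F_i, F_{2j-i}\rangle$. The sum over $i=1,\dots,j$ can then be symmetrised using $\langle F_i,F_{2j-i}\rangle = \langle F_{2j-i},F_i\rangle$: extending the index to $i=1,\dots,2j-1$ doubles all off-diagonal terms and produces exactly one copy of the diagonal term $\langle F_j,F_j\rangle$, which cancels the boundary term $\xi|F_j|^2$ coming from the variation of the measure. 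The result is the clean expression
\[
D_{c,m} G_c(h,h) = a_0\!\int_{S^1}\!\!\xi\langle h,h\rangle\ud s - \sum_{k=1}^n\sum_{j=1}^{2k-1}(-1)^{k+j} a_k\!\int_{S^1}\!\!\xi\langle D_s^{2k-j} h, D_s^j h\rangle\ud s.
\]

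To read off $H_c(h,h)$, rewrite each $\xi = \langle D_s m, v\rangle$: for any scalar $g$,
\[
\int_{S^1}\!\!\xi g\ud s = \int_{S^1}\!\!\langle g v\,|c'|, D_s m\rangle\ud\th = \int_{S^1}\!\!\langle D_s^\ast(g v\,|c'|), m\rangle\ud\th,
\]
and since $D_s^\ast = -\p_\th\circ|c'|^{-1}$ we have $D_s^\ast(gv|c'|) = -|c'|D_s(gv)$; applying this to the $k=0$ term yields $-a_0|c'|D_s(\langle h,h\rangle v)$ as in the statement, and the double sum appears exactly in the claimed form. This gives the first formula, valid on $\on{Imm}^n$ since each $D_s^{2k-j}h, D_s^j h$ lies in at least $L^2$ for $h\in H^n$ and the surviving $D_s^\ast$ costs one further derivative absorbed by the outer distributional pairing.

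For the equivalent expression on $\on{Imm}^p$, $p\ge n+1$ (where all derivatives are classical), expand $D_s^\ast(|c'|gv) = -|c'|\bigl((D_s g)v + g D_s v\bigr) = -|c'|\bigl((D_s g)v + g\ka n\bigr)$ using the Frenet equation $D_s v = \ka n$, and similarly for the $a_0$ term. The $\ka n$ contributions combine directly into $-a_0\langle h,h\rangle\ka n + \sum_{k,j}(-1)^{k+j}a_k\langle D_s^{2k-j}h,D_s^j h\rangle\ka n$. For the $v$ components one uses $D_s\langle D_s^j h,D_s^{2k-j}h\rangle = \langle D_s^{j+1}h,D_s^{2k-j}h\rangle+\langle D_s^j h,D_s^{2k-j+1}h\rangle$ and a telescoping index shift to collapse the alternating double sum; all interior terms cancel pairwise and only the extremes $j=1$ and $j=2k-1$ survive, producing $-2a_k\langle D_s^{2k}h, D_s h\rangle$ for each $k\ge 1$. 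Combined with the $a_0$ contribution $-2a_0\langle h, D_s h\rangle$, this is exactly $-2\langle L_c h, D_s h\rangle v$ by definition of $L_c$, matching the second formula. The main bookkeeping hurdles are the symmetrisation step that produces the $\sum_{j=1}^{2k-1}$ range and the telescoping identity in the $v$-component; beyond that the proof is a careful application of the product rule, integration by parts, and the Frenet equations.
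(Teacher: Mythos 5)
Your computation is essentially the paper's proof: the same variation formula $D_{c,m}D_s^k h=-\sum_{j=1}^k D_s^{k-j}(\langle D_sm,v\rangle D_s^jh)$, the same integration by parts, the same symmetrisation in which the diagonal term absorbs the variation of the measure and produces the range $j=1,\dots,2k-1$, and the same telescoping argument yielding $-2\langle L_ch,D_sh\rangle$ for the second expression. All of that checks out (modulo a harmless sign slip in your description of the surviving telescoped term, which should be $2(-1)^{k+1}a_k\langle D_s^{2k}h,D_sh\rangle$; your final identification with $-2\langle L_ch,D_sh\rangle$ is correct).

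The one point that does not hold up is your justification of validity on $\on{Imm}^n$: it is not true that ``each $D_s^{2k-j}h$ lies in at least $L^2$ for $h\in H^n$'' --- for $k=n$, $j=1$ the term $D_s^{2n-1}h$ lies only in $H^{-n+1}$. The formula is nevertheless well defined because in each product $\langle D_s^{2k-j}h,D_s^jh\rangle$ the lower-order factor lies in $H^{n-\min(j,2k-j)}$ with $n-\min(j,2k-j)\ge \max(j,2k-j)-n$, so Lem.~\ref{lem:sob_mult} applies; and, more importantly, for $h\in H^n$ the repeated integration by parts in your derivation must be read as the chain of duality pairings $H^{-n+j}\times H^{n-j}$, which is exactly how the paper organises the computation. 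With that replacement your argument is complete.
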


\begin{proof}
For $k\geq 1$ the variation of the $k$-th arc-length derivative is
\[
D_{c,m} D_s^k h = - \sum_{j=1}^k D_s^{k-j} \left( \langle D_s m, v \rangle D_s^j h \right)\,,
\]
and the formula is valid for $(c,m) \in T\on{Imm}^n(S^1,\R^2)$ and $h \in H^{-n+k}(S^1,\R^d)$. So
\begin{align*}
D_{c,m} G_c(h,h) 
&=\!\! \int_{S^1} \sum_{k=0}^n a_k\, \langle D_s^k h, D_s^k h \rangle
\langle D_s m, v \rangle \, |c'|
+ 2 \sum_{k=1}^n a_k \left\langle D_s^k h, D_{c,m} D_s^k h \right\rangle |c'| \ud \th \\
&= \left\langle \sum_{k=0}^n a_k \,|c'| \langle D_s^k h, D_s^k \rangle v, 
D_s m \right\rangle_{H^{-n+1}\x H^{n-1}} \\
&\qquad\qquad -2 \sum_{k=1}^n \sum_{j=1}^k a_k\, \left\langle
|c'| D_s^k h, D_s^{k-j} \langle D_s m, v \rangle D_s^j h \right\rangle_{H^{-n+k}\x H^{n-k}}\,.
\end{align*}
Each term in the second sum is equal to
\begin{align*}
\Big\langle |c'| D_s^k h, 
D_s^{k-j} \langle D_s m, v \rangle &D_s^j h \Big\rangle_{H^{-n+k}\x H^{n-k}} = \\
&= \left\langle \left(D_s^{k-j}\right)^\ast |c'|\, D_s^k h, 
\langle D_s m, v \rangle D_s^j h \right\rangle_{H^{-n+j}\x H^{n-j}} \\
&= (-1)^{k-j} \left\langle |c'|\, D_s^{2k-j} h, 
\langle D_s m, v \rangle D_s^j h \right\rangle_{H^{-n+j}\x H^{n-j}} \\
&= (-1)^{k-j} \left\langle |c'|\, \langle D_s^{2k-j} h, 
D_s^j h \rangle v, D_s m \right\rangle_{H^{-n+1}\x H^{n-1}}\,.
\end{align*}
So
\begin{align*}
H_c&(h,h) =\\
&=
\sum_{k=0}^n a_k D_s^\ast \o \left( |c'| \langle D_s^k h, D_s^k h \rangle v \right)
- 2\sum_{k=1}^n \sum_{j=1}^k (-1)^{k-j} a_k D_s^\ast \o \left(
|c'|\langle D_s^{2k-j} h, D_s^j h \rangle v \right) \\
&= -a_0\, |c'| D_s\left( \langle h, h \rangle v \right)
- \sum_{k=1}^n \sum_{j=1}^{2k-1} (-1)^{k+j} a_k\, D_s^\ast \o
\left( |c'| \langle D_s^{2k-j} h, D_s^j h \rangle v \right)\,.
\end{align*}
This proves the first formula.

If $(c,h) \in T\on{Imm}^p(S^1,\R^2)$ with $p \geq 1$, we can commute $D_s^\ast \o |c'| = -|c'| \o D_s$ to obtain
\[
H_c(h,h)
= -a_0\, |c'| D_s\left( \langle h, h \rangle v \right)
+ \sum_{k=1}^n \sum_{j=1}^{2k-1} (-1)^{k+j} a_k\, |c'|\, D_s \left( \langle D_s^{2k-j} h, D_s^j h \rangle v \right)\,.
\]
Parts of the expression simplify as follows
\begin{align*}
&\sum_{k=1}^n \sum_{j=1}^{2k-1} (-1)^{k+j} a_k 
D_s \left( \langle D_s^{2k-j} h, D_s^j h \rangle \right) 
- a_0 D_s \Big( \langle h, h \rangle \Big) \\
&= \sum_{k=1}^n \sum_{j=1}^{2k-1} (-1)^{k+j} a_k
\left( \langle D_s^{2k-j+1} h, D_s^j h \rangle 
+ \langle D_s^{2k-j} h, D_s^{j+1} h \rangle \right) 
- 2a_0 \langle h, D_s h \rangle \\
&= \sum_{k=1}^n a_k \!\left( \sum_{j=0}^{2k-2} (-1)^{k+j+1}
\langle D_s^{2k-j} h, D_s^j h \rangle 
+ \sum_{j=1}^{2k-1} (-1)^{k+j}
\langle D_s^{2k-j} h, D_s^{j+1} h \rangle \right) 
\!\!-\! 2a_0 \langle h, D_s h\rangle \\
&= \sum_{k=1}^n (-1)^{k+1} 2a_k \langle D_s^{2k} h, D_s h \rangle 
- 2a_0 \langle h, D_s h\rangle \\
&= -2 \langle L_c h, D_s h \rangle\,,
\end{align*}
And by collecting the remaining terms we arrive at the desired result.
\end{proof}

Now that we have computed $H_c(h,h)$, we can write the geodesic equation of the metric $G$. It is
\begin{equation}
\label{eq:geod_eq}
\begin{split}
\p_t \left(\bar{L}_c c_t\right) &=
-\frac{a_0}2 \,|c'|\, D_s\left( \langle c_t, c_t \rangle v \right) \\
&\qquad{} - \sum_{k=1}^n \sum_{j=1}^{2k-1} (-1)^{k+j} \frac{a_k}{2}\, D_s^\ast \o
\left( |c'| \langle D_s^{2k-j} c_t, D_s^j c_t \rangle v \right)\,.
\end{split}
\end{equation}

\subsection{Local Well-Posedness}

It has been shown in \cite[Thm.\ 4.3]{Michor2007} that the geodesic equation of a Sobolev metric is 
well-posed on $\on{Imm}^p(S^1,\R^2)$ for $p \geq 2n+1$. For a metric of order $n\geq 2$ we extend 
the result to $p \geq n$. This will later simplify the proof of geodesic completeness.  

\begin{theorem}
\label{thm:geod_ex}
Let $n\geq 2$, $p\geq n$ and let $G$ be a Sobolev metric of order $n$ with constant 
coefficients. Then the geodesic equation \eqref{eq:geod_eq} has unique local solutions in the space 
$\on{Imm}^{p}(S^1,\R^2)$ of Sobolev $H^p$-immersions. The solutions depend $C^\infty$ on $t$ and 
the initial conditions. The domain of existence (in $t$) is uniform in $p$ and thus the geodesic 
equation also has local solutions in $\on{Imm}(S^1,\R^2)$, the space of smooth immersions.    
\end{theorem}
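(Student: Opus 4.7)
My plan would be to recast the geodesic equation as a first-order ODE on a Banach manifold and then apply the Picard-Lindelöf theorem. Using Lem.\ \ref{lem:convenient_geod_eq} and writing $u = c_t$, the equation becomes
\[
\p_t c = u, \qquad \p_t u = F(c,u) := \tfrac{1}{2} \bar{L}_c^{-1}\!\left(H_c(u,u) - (D_{c,u}\bar{L}_c)(u)\right),
\]
interpreted as a vector field on $T\on{Imm}^p(S^1,\R^2) \cong \on{Imm}^p(S^1,\R^2) \times H^p(S^1,\R^2)$. Local existence, uniqueness, and smooth dependence on initial data in $H^p \times H^p$ then follow from Banach-space Picard-Lindelöf as soon as $F$ is smooth on this domain.

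To verify smoothness of $F$, I would split it into numerator and inverse. The numerator $H_c(u,u) - (D_{c,u}\bar{L}_c)(u)$ is a sum of terms of the form $(D_s^r)^\ast(|c'| \langle D_s^i u, D_s^j u \rangle v)$ with $r \leq 1$ and $r + i + j \leq 2n$, so combining Lem.\ \ref{lem:Ds_smooth_sobolev} with the pointwise multiplication result Lem.\ \ref{lem:sob_mult} (used in its negative-order version whenever $i$ or $j$ is close to $p$) shows smoothness as a map $\on{Imm}^p \times H^p \to H^{p-2n}$. For the inverse, $\bar{L}_c$ is an elliptic, self-adjoint, strictly positive differential operator of order $2n$ whose coefficients involve only $|c'| \in H^{p-1}$; Fredholm theory together with positivity then gives that $\bar{L}_c : H^p \to H^{p-2n}$ is an isomorphism, and smooth dependence $c \mapsto \bar{L}_c^{-1}$ into $L(H^{p-2n}, H^p)$ follows from a standard Neumann-series argument around smooth base points.

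Once $F$ is smooth, Picard-Lindelöf yields $C^\infty$ solutions on some interval $[0, T_p)$ with smooth dependence on initial data; smoothness in $t$ is obtained by differentiating the ODE inductively. To see that $T_p$ can be chosen independent of $p$---and hence that smooth initial data yield solutions in $\on{Imm}(S^1, \R^2)$ on the same interval---I would use a persistence-of-regularity argument: uniqueness forces the $H^p$ and $H^n$ solutions to coincide on their common interval, and an a priori energy estimate controlling $\|(c,u)\|_{H^p}$ in terms of $\|(c,u)\|_{H^n}$, derived by differentiating the equation and using Lem.\ \ref{lem:poincare2}, precludes $H^p$-blow-up before $T_n$. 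The main obstacle I expect is the borderline case $p=n$, where $\bar{L}_c^{-1}$ maps $H^{-n}$ to $H^n$ across the $L^2$-threshold while $|c'|$ has only the minimum regularity $H^{n-1}$ that makes $\bar{L}_c$ meaningful by Lem.\ \ref{lem:Ds_smooth_sobolev}; the constant-coefficient hypothesis on $G$ is essential here, since additional $c$-dependent multipliers would degrade the coefficient regularity and invalidate the Sobolev bookkeeping above.
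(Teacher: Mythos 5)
Your overall architecture coincides with the paper's: rewrite the equation via Lem.~\ref{lem:convenient_geod_eq} as a first-order ODE on $\on{Imm}^p(S^1,\R^2)\times H^p(S^1,\R^2)$ and apply Picard--Lindel\"of on a Banach space. The genuine gap is at the point you yourself flag but do not resolve: the invertibility of $\bar L_c : H^p \to H^{p-2n}$. ``Fredholm theory together with positivity'' is not available off the shelf here. For $c\in\on{Imm}^p$ one only has $|c'|\in H^{p-1}$, so for $n\le p<2n$ the non-divergence form $\bar L_c=\sum_k(-1)^k a_k\,|c'|\,D_s^{2k}$ has coefficients that are only distributions (the paper makes exactly this point in the paragraph preceding Lem.~\ref{lem:Lc_inv_smooth}), and the divergence form $\sum_k a_k (D_s^k)^\ast\circ|c'|\circ D_s^k$ is a bounded map $H^p\to H^{p-2n}$ to which the standard elliptic parametrix/regularity machinery does not apply. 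Positivity plus Lax--Milgram does give the isomorphism in the single case $p=n$ (using Lem.~\ref{lem:Hk_local_equivalence}), but for $p>n$ the paper needs a dedicated argument, Lem.~\ref{lem:Lc_inv_smooth}: factor $c=d\circ\psi$ with $d$ of constant speed, use reparametrization invariance to write $\bar L_c=R_{\psi^{-1}}^\ast\circ\bar L_d\circ R_{\psi^{-1}}$ where $\bar L_d$ is genuinely constant-coefficient and hence an isomorphism $H^p\to H^{p-2n}$, and control $R_{\psi^{-1}}$ and its transpose by Lem.~\ref{Sobolev-composition} and Lem.~\ref{Sobolev-transpose}. This is where the constant-coefficient hypothesis really enters --- not as Sobolev bookkeeping but because it makes the conjugated operator literally constant-coefficient. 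Your Neumann-series remark for smoothness of $c\mapsto\bar L_c^{-1}$ is fine once bijectivity onto $H^{p-2n}$ is known (the paper uses the implicit function theorem, which is equivalent); it is the bijectivity itself that your argument does not establish.

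A secondary weakness is the uniformity of the existence interval in $p$. The paper does not prove this by an a priori energy estimate; it cites the Ebin--Marsden ``no loss, no gain'' result (via \cite[App.~A]{Bauer2013d_preprint} and \cite[Thm.~12.1]{Ebin1970}), whose proof rests on the equivariance of the geodesic spray under the reparametrization action rather than on differentiating the equation. Your proposed estimate of $\|(c,u)\|_{H^p}$ in terms of $\|(c,u)\|_{H^n}$ would require a tame bound on $\bar L_c^{-1}$ and on the quadratic terms that you have not supplied, and Lem.~\ref{lem:poincare2} alone will not produce it; as written this step is a plausible programme, not a proof. Also, a small bookkeeping slip: the terms of $(D_{c,u}\bar L_c)(u)$ involve $(D_s^r)^\ast$ with $r$ up to $n$, not only $r\le 1$, though this does not affect the conclusion that the right-hand side lands smoothly in $H^{p-2n}$.
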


\begin{proof}
Fix $p \geq n$. For the geodesic equation to exist, we need to verify the assumptions in Lem.\ \ref{lem:convenient_geod_eq}. We first note that $\bar{L}_c$ is a map $\bar L_c : H^p \to H^{p-2n}$. By inspecting \eqref{eq:H_gradient_Sobolev} we see that $H_c(h,h) \in H^{p-2n}$ as well. Thus it remains to show that $\bar L_c$ maps $H^p$ onto $H^{p-2n}$ and that the inverse is smooth. This is shown in \ref{lem:Lc_inv_smooth}.

Regarding local existence, we rewrite the geodesic equation as a differential equation on $T\on{Imm}^n(S^1,\R^2)$,
\begin{align*}
c_t &= u \\
u_t &= \frac12 \bar L_c\i\left(H_c(u,u) - \left(D_{c,u} \bar L_c\right)\!(u)\right)\,.
\end{align*}
This is a smooth ODE on a Hilbert space and therefore by Picard-Lindel\"of it has local solutions, 
that depend smoothly on $t$ and the initial conditions. That the intervals of existence are uniform 
in the Sobolev order $p$, can be found in \cite[App. A]{Bauer2013d_preprint}. The result goes back 
to \cite[Thm.\ 12.1]{Ebin1970} and a different proof can be found in \cite{Michor2007}. 
\end{proof}

The following lemma shows that the operator $\bar L_c$ has a smooth inverse on appropriate Sobolev spaces. For $p=n$, we can use Lem. \ref{lem:Hk_local_equivalence} and the lemma of Lax-Milgram to show that $\bar L_c : H^n \to H^{-n}$ is invertible. For $p>n$ more work is necessary. Although $\bar L_c$ is an elliptic, positive differential operator, it has non-smooth coefficients. In fact, since $|c'| \in H^{n-1}$, some of the coefficients are only distributions. To overcome this, we will exploit the reparametrization invariance of the metric to transform $\bar L_c$ into a differential operator with constant coefficients.

\begin{lemma}
\label{lem:Lc_inv_smooth}
Let $n\geq 2$ and $G$ be a Sobolev metric of order $n$. For $p \geq n$ and $c \in \on{Imm}^p(S^1,\R^2)$, the associated operators
\[
\bar L_c : H^p(S^1,\R^d) \to H^{p-2n}(S^1,\R^d)\,,
\]
are isomorphisms and the map
\[
\bar L\i : \on{Imm}^p(S^1,\R^2) \x H^{p-2n}(S^1,\R^d) \to H^{p}(S^1,\R^d)\,,\quad
(c,h) \mapsto \bar L_c\i h
\]
is smooth.
\end{lemma}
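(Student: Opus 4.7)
The plan is to establish the bijectivity of $\bar L_c$ and the smooth dependence of its inverse separately: first I would prove invertibility by combining a Lax-Milgram argument at the lowest order with a reparametrization trick at higher orders, and then I would deduce smoothness from the implicit function theorem.

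For $p = n$, the bilinear form $G_c$ is bounded and coercive on $H^n(S^1,\R^d)$: coercivity follows from $a_0, a_n > 0$ together with Lem.~\ref{lem:poincare2}, while boundedness and the fact that the coercivity constant can be taken uniform on metric balls follow from the equivalence of the $H^n(ds)$- and $H^n(d\th)$-norms (Lem.~\ref{lem:Hk_local_equivalence}). Lax-Milgram then gives that $\bar L_c : H^n \to H^{-n}$ is a topological isomorphism. For $p > n$, I would set $\psi_c(\th) = \frac{2\pi}{\ell_c}\int_0^\th |c'(\si)|\ud\si$; since $|c'| \in H^{p-1}$ is strictly positive, $\psi_c \in \mc D^p(S^1)$, and its inverse $\ph_c \in \mc D^p(S^1)$ produces a constant-speed reparametrization $\tilde c := c \o \ph_c \in \on{Imm}^p(S^1,\R^2)$ with $|\tilde c'| \equiv \ell_c/(2\pi)$. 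Reparametrization invariance of $G$ translates into the operator identity
\[
\bar L_c = R_{\ph_c}^\ast \, \bar L_{\tilde c}\, R_{\ph_c}\,.
\]
Now $\bar L_{\tilde c}$ has constant coefficients, hence is a Fourier multiplier with symbol bounded below and defines a Banach isomorphism $H^p \to H^{p-2n}$ for every integer $p$; $R_{\ph_c} : H^p \to H^p$ is an isomorphism by Lem.~\ref{Sobolev-composition}; and $R_{\ph_c}^\ast : H^{p-2n} \to H^{p-2n}$ is an isomorphism by Lem.~\ref{Sobolev-transpose}, where the assumption $p \geq n$ is exactly what guarantees that the index $p - 2n$ lies in the admissible range $[-p, p-1]$.

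For smoothness, I would verify that the map
\[
F : \on{Imm}^p(S^1,\R^2) \x H^p(S^1,\R^d) \to H^{p-2n}(S^1,\R^d)\,,\quad (c,h) \mapsto \bar L_c h
\]
is smooth. Expanding $\bar L_c h = \sum_{k=0}^n a_k (D_s^k)^\ast (|c'|\, D_s^k h)$ and applying Lem.~\ref{lem:Ds_smooth_sobolev} and Lem.~\ref{lem:sob_mult} iteratively does the job, once again using $p \geq n$ to keep every intermediate Sobolev index in the admissible range. Since $\p_h F(c,h) = \bar L_c$ is a Banach isomorphism by the previous step, the implicit function theorem then produces a smooth map $(c,f) \mapsto \bar L_c\i f$ from $\on{Imm}^p(S^1,\R^2) \x H^{p-2n}(S^1,\R^d)$ to $H^p(S^1,\R^d)$, as required.

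The main obstacle I anticipate is the careful bookkeeping of Sobolev orders throughout. In the reparametrization identity one has to verify that each of the three factors acts on the Sobolev space dictated by the others — the tightest constraint being $R_{\ph_c}^\ast$ at index $p - 2n$, which is precisely the borderline case covered by the hypothesis $p \geq n$. A similar caveat applies to the smoothness check for $F$, where the coefficient $|c'|$ only lives in $H^{p-1}$, so the Sobolev multiplication lemma has to be invoked at each intermediate step and must never leave the allowed window.
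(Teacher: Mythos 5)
Your proposal is correct and follows essentially the same route as the paper's proof: the constant-speed reparametrization $\tilde c = c\o\ph_c$, the conjugation identity $\bar L_c = R_{\ph_c}^\ast\, \bar L_{\tilde c}\, R_{\ph_c}$ with the index check that $p-2n$ lies in the range where $R_{\ph_c}^\ast$ is bounded, invertibility of the constant-coefficient operator $\bar L_{\tilde c}$, and the implicit function theorem for smoothness of the inverse. The only (harmless) difference is that you treat $p=n$ separately via Lax--Milgram, whereas the paper's reparametrization argument already covers that borderline case uniformly.
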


\begin{proof}
Given a curve $c \in \on{Imm}^p(S^1,\R^2)$, we can write it as $c = d \o \ps$, where $d$ has constant speed, $|d'| = \ell_c / 2\pi$, and $\ps$ is a diffeomorphism of $S^1$. The pair $(d, \ps)$ is determined only up to rotations; we can remove the ambiguity by requiring that $c(0) = d(0)$. Then $\ps$ is given by
\[
\ps(\th) = \frac{2\pi}{\ell_c} \int_0^\th |c'(\si)| \ud \si\,.
\]
Concerning regularity, we have $\ps$ and $\ps^{-1} \in H^p(S^1,S^1)$ thus $\ps\in\mc D^p(S^1)$, and $d \in \on{Imm}^p(S^1,\R^2)$.

The reparametrization invariance of the metric $G$ implies
\[
\langle \bar{L}_c h, m \rangle_{H^{-p}\x H^p} = 
\left\langle \bar{L}_{c \o \ps\i}(h \o \ps\i), m \o\ps\i \right\rangle_{H^{-p}\x H^p}\,.
\]
Introduce the notation $R_\ph(h) = h \o \ph$. If $\ph \in \mc D^p(S^1)$ is a diffeomorphism, 
the map $R_\ph$ is an invertible linear map $R_\ph : H^p \to H^p$, by Lem.\ \ref{Sobolev-composition}. Furthermore by Lem.\ \ref{Sobolev-transpose} the transpose $R_\ph^\ast$ is an invertible map $R_\ph^\ast : H^{p-2n} \to H^{p-2n}$.
Thus we get
\[
\bar{L}_c h = R_{\ps\i}^\ast \o \bar{L}_d \o R_{\ps\i}(h)\,.
\]
Because $|d'|=\ell_c/2\pi$, the operator $\bar{L}_d$ is equal to
\[
\bar L_d = \sum_{k=0}^n (-1)^k a_k \left( \frac{2\pi}{\ell_c}\right)^{2k-1} \p_\th^{2k}\,.
\]
This is a positive, elliptic differential operator with constant coefficients and thus $\bar{L}_d : H^p \to H^{p-2n}$ is invertible. 
Thus the composition $\bar{L}_c : H^p \to H^{p-2n}$ is invertible. 
Smoothness of $(c,h) \mapsto \bar{L}_c\i h$ follows from the smoothness of $(c,h) \mapsto \bar{L}_c h$ 
and the implicit function theorem on Banach spaces.
\end{proof}

The remainder of the paper will be concerned with the analysis of the geodesic distance function induced by Sobolev metrics. These results will be used to show that geodesics for metrics of order $2$ and higher exist for all times.

\section{Lower Bounds on the Geodesic Distance}

To prepare the proof of geodesic completeness we first need to use geodesic distance to estimate quantities, that are derived from the curve and that appear in the geodesic equation. These include the length $\ell_c$, curvature $\ka$, its derivatives $D_s^k \ka$ as well as the length element $|c'|$ and its derivatives $D_s^k \log |c'|$. We want to show that they are bounded on metric balls of a Sobolev metric of sufficiently high order.

We start with the length $\ell_c$. The argument given in \cite[Sect.\ 4.7]{Michor2007} can be used to show the following slightly stronger statement.

\begin{lemma}
\label{lem:ell_lip}
Let the metric $G$ on $\on{Imm}(S^1,\R^2)$ satisfy
\[
\int_{S^1} \langle D_s h, v \rangle^2 \ud s \leq A\, G_c(h,h)\,,
\]
for some $A>0$. Then we have the estimate
\[
\left\| \sqrt{|c_1'|} - \sqrt{|c_2'|} \right\|_{L^2(d\th)} \leq \frac {\sqrt{A}}2 \on{dist}^G(c_1, c_2)\,,
\]
in particular the function 
$\sqrt{|c'|} : (\on{Imm}(S^1,\R^2), \on{dist}^G) \to L^2(S^1,\R)$
is Lipschitz.
\end{lemma}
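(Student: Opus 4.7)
The plan is to derive the estimate along any smooth path joining $c_1$ and $c_2$ and then take the infimum over such paths, i.e., a standard arc-length-of-the-image computation in Riemannian geometry.

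First, I would use the variational formula $D_{c,h}(|c'|^k) = k\langle D_s h, v\rangle |c'|^k$ already recorded in the paper. Specializing to $k = 1/2$ gives
\[
D_{c,h}\sqrt{|c'|} = \tfrac{1}{2}\langle D_s h, v\rangle\sqrt{|c'|}\,.
\]
Now let $c: [0,1] \to \on{Imm}(S^1,\R^2)$ be any smooth path with $c(0)=c_1$, $c(1)=c_2$. Differentiating $\sqrt{|c(t,\cdot)'|}$ in $t$ and integrating yields, pointwise in $\th$,
\[
\sqrt{|c_2'(\th)|} - \sqrt{|c_1'(\th)|} = \tfrac{1}{2}\int_0^1 \langle D_s c_t, v\rangle(t,\th)\sqrt{|c(t,\th)'|}\,\ud t\,.
\]

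Next I take the $L^2(\ud\th)$-norm and apply Minkowski's integral inequality to pull the $L^2(\ud\th)$-norm inside the $t$-integral:
\[
\left\| \sqrt{|c_2'|} - \sqrt{|c_1'|} \right\|_{L^2(\ud\th)} \leq \tfrac{1}{2}\int_0^1 \bigl\| \langle D_s c_t, v\rangle\sqrt{|c'|} \bigr\|_{L^2(\ud\th)}\,\ud t\,.
\]
The integrand equals $\bigl(\int_{S^1}\langle D_s c_t, v\rangle^2 |c'|\,\ud\th\bigr)^{1/2} = \bigl(\int_{S^1}\langle D_s c_t, v\rangle^2\,\ud s\bigr)^{1/2}$, which by the standing hypothesis is bounded by $\sqrt{A}\,\sqrt{G_{c(t)}(c_t,c_t)}$. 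Substituting gives
\[
\left\| \sqrt{|c_2'|} - \sqrt{|c_1'|} \right\|_{L^2(\ud\th)} \leq \tfrac{\sqrt A}{2} \int_0^1 \sqrt{G_{c(t)}(c_t,c_t)}\,\ud t = \tfrac{\sqrt A}{2}\,L_G(c)\,,
\]
where $L_G(c)$ denotes the $G$-length of the path. Taking the infimum over all smooth paths from $c_1$ to $c_2$ yields the claimed estimate
\[
\left\| \sqrt{|c_1'|} - \sqrt{|c_2'|} \right\|_{L^2(\ud\th)} \leq \tfrac{\sqrt A}{2}\,\on{dist}^G(c_1,c_2)\,,
\]
and this precisely says that $c\mapsto \sqrt{|c'|}$ is Lipschitz from $(\on{Imm}(S^1,\R^2), \on{dist}^G)$ into $L^2(S^1,\R)$ with constant $\sqrt A/2$.

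There is essentially no real obstacle: the only mild point to be careful about is the justification of the pointwise identity $\p_t \sqrt{|c'|} = \tfrac{1}{2}\langle D_s c_t, v\rangle\sqrt{|c'|}$ along a smooth path, which is immediate from the variational formula since along a path $c_t$ plays the role of $h$. Minkowski's integral inequality and the hypothesis then do all the work, and the switch to arc-length measure turns the $\sqrt{|c'|}$ weight into the needed $\ud s$ factor so that $G$-norm control enters cleanly.
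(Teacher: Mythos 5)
Your proposal is correct and follows essentially the same route as the paper: differentiate $\sqrt{|c'|}$ along a path using the variational formula, integrate in $t$, apply Minkowski's integral inequality, convert the weighted $L^2(d\th)$-norm into the $L^2(ds)$-norm, invoke the hypothesis, and take the infimum over paths. No gaps.
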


\begin{proof}
Take two curves $c_1, c_2 \in \on{Imm}(S^1,\R^2)$ and let $c(t,\th)$ be a smooth path between them. Then the following relation holds pointwise for each $\th \in S^1$,
\[
\sqrt{|c_2'|}(\th) - \sqrt{|c_1'|}(\th) = \int_0^1 \p_t \left(\sqrt{|c'|} \right)(t,\th) \ud t\,.
\]
The derivative $\p_t \sqrt{|c'|}$ is given by
\[
\p_t \sqrt{|c'|} = \frac 12 \langle D_s c_t,  v\rangle \sqrt{|c'|}\,,
\]
and so
\begin{align*}
\left\| \sqrt{|c_1'|} - \sqrt{|c_2'|} \right\|_{L^2(d\th)} 
&\leq \frac 12 \int_0^1 \left\| \langle D_s c_t, v \rangle 
\sqrt{|c'|} \right\|_{L^2(d\th)} \ud t \\
&\leq \frac 12 \int_0^1 \big\| \langle D_sc_t,v \rangle \big\|_{L^2(ds)} \ud t \\
&\leq \frac {\sqrt{A}}2 \int_0^1 \sqrt{ G_c(c_t,c_t)} \ud t \\
&\leq \frac {\sqrt{A}}2 \on{Len}^G(c)\,.
\end{align*}
Since this estimate holds for every smooth path $c$, by taking the infimum we obtain 
\[
\left\| \sqrt{|c_1'|} - \sqrt{|c_2'|} \right\|_{L^2} \leq \frac {\sqrt{A}}2 \inf_{c} \on{Len}^G(c) 
= \frac {\sqrt{A}}2 \on{dist}^G(c_1,c_2)\,.	 \qedhere
\]
\end{proof}

We recover the statement of \cite[Sect.\ 4.7]{Michor2007} by applying the reverse triangle 
inequality. The following corollary is a disguised version of the fact, that on a normed space the 
norm function is Lipschitz.  

\begin{corollary}
\label{cor:ell_lip}
If the metric $G$ on $\on{Imm}(S^1,\R^2)$ satisfies
\[
\int_{S^1} \langle D_s h, v \rangle^2 \ud s \leq A\, G_c(h,h)\,,
\]
for some $A>0$, then the function $\sqrt{\ell_c} : (\on{Imm}(S^1,\R^2), \on{dist}^G) \to \R_{>0}$ is Lipschitz.
\end{corollary}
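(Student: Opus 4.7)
The plan is to reduce the corollary directly to Lemma \ref{lem:ell_lip} via the reverse triangle inequality in $L^2(d\th)$. The key observation is the identity
\[
\sqrt{\ell_c} = \left( \int_{S^1} |c'| \ud\th \right)^{1/2} = \left\| \sqrt{|c'|} \,\right\|_{L^2(d\th)} \,,
\]
which realizes $\sqrt{\ell_c}$ as the $L^2(d\th)$-norm of $\sqrt{|c'|}$. This is exactly the sense in which the corollary is a disguised Lipschitz-continuity-of-the-norm statement.

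First I would apply Lemma \ref{lem:ell_lip} to two curves $c_1, c_2 \in \on{Imm}(S^1,\R^2)$ to obtain
\[
\left\| \sqrt{|c_1'|} - \sqrt{|c_2'|} \,\right\|_{L^2(d\th)} \leq \tfrac{\sqrt{A}}{2} \on{dist}^G(c_1,c_2)\,.
\]
Next, the reverse triangle inequality in $L^2(d\th)$ gives
\[
\left| \,\left\| \sqrt{|c_1'|}\, \right\|_{L^2(d\th)} - \left\| \sqrt{|c_2'|}\, \right\|_{L^2(d\th)} \right|
\leq \left\| \sqrt{|c_1'|} - \sqrt{|c_2'|}\, \right\|_{L^2(d\th)}\,.
\]
Combining these two with the identity above yields
\[
\left| \sqrt{\ell_{c_1}} - \sqrt{\ell_{c_2}}\, \right| \leq \tfrac{\sqrt{A}}{2}\, \on{dist}^G(c_1,c_2)\,,
\]
which is precisely the Lipschitz property of $\sqrt{\ell_c}$ with constant $\sqrt{A}/2$.

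There is no real obstacle here; the entire content is packaged in Lemma \ref{lem:ell_lip}. The only thing to check is the identification $\sqrt{\ell_c} = \|\sqrt{|c'|}\|_{L^2(d\th)}$, which is immediate from the definition $\ell_c = \int_{S^1} 1 \ud s = \int_{S^1} |c'| \ud\th$. Strict positivity of $\sqrt{\ell_c}$ (so that the codomain $\R_{>0}$ is sensible) follows because $c$ is an immersion, hence $|c'|>0$ pointwise.
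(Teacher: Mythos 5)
Your proof is correct and is essentially identical to the paper's: both write $\sqrt{\ell_c}=\bigl\|\sqrt{|c'|}\bigr\|_{L^2(d\th)}$ and combine the reverse triangle inequality with the estimate of Lemma \ref{lem:ell_lip} to get the Lipschitz constant $\sqrt{A}/2$. Nothing further is needed.
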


\begin{proof}
The statement follows from
\[
\ell_c = \int_{S^1} |c'(\th)| \ud \th = \left\| \sqrt{|c'|} \right\|_{L^2(d\th)}^2\,,
\]
and the inequality
\begin{align}
\label{eq:ell_lip_const}
\left|\sqrt{\ell_{c_1}} - \sqrt{\ell_{c_2}} \right| 
&= \left| \left\| \sqrt{|c'_1|} \right\|_{L^2(d\th)} - \left\| \sqrt{|c'_2|} \right\|_{L^2(d\th)} \right| 
\\\notag
&\leq \left\| \sqrt{|c_1'|} - \sqrt{|c_2'|} \right\|_{L^2(d\th)} \leq \frac {\sqrt{A}}2 
\on{dist}^G(c_1, c_2)\,. \qedhere
\end{align}
\end{proof}

\begin{remark}
Lemma \ref{lem:ell_lip} and Cor.\ \ref{cor:ell_lip} apply in particular to Sobolev metrics of order $n\geq 1$. 
For $n=1$ this is clear from $\langle D_s h, v \rangle^2 \leq |D_s h|^2$. 
For $n\geq 2$ we use Lem.\ \ref{lem:poincare2} to estimate
\[
\int_{S^1} \langle D_s h, v \rangle^2 \ud s \leq \| D_s h \|^2_{L^2(ds)}
\leq  \| h\|^2_{L^2(ds)} + \| D_s^n h \|^2_{L^2(ds)}
\leq \max\left(a_0^{-1}, a_n^{-1}\right) G_c(h,h)\,.
\]
We could have also used Lem.\ \ref{lem:poincare3},
\[
\int_{S^1} \langle D_s h, v \rangle^2 \ud s \leq \| D_s h \|^2_{L^2(ds)}
\leq \| h \|^{2-2/n}_{L^2(ds)}\, \| D_s^n h \|^{2/n}_{L^2(ds)}
\leq a_0^{(1-n)/n} a_n^{-1/n} G_c(h,h)\,,
\]
to reach the same conclusion.
\end{remark}

The following lemma shows a similar statement for $\ell_c^{-1/2}$. We do not get global Lipschitz continuity, instead the function $\ell_c^{-1/2}$ is Lipschitz on every metric ball. This implies that $\ell_c\i$ is bounded on every metric ball. We will show later in Cor.\ \ref{cor:lenpt_inv_bound} that the pointwise quantities $|c'(\th)|$ and $|c'(\th)|\i$ are also bounded on metric balls.

\begin{lemma}
\label{lem:ellc_inv_lipschitz}
Let the metric $G$ on $\on{Imm}(S^1,\R^2)$ satisfy
\[
\int_{S^1} \left| h \right|^2 + \left|D_s^n h \right|^2 \ud s \leq A\, G_c(h,h)
\]
for some $n\geq 2$ and some $A>0$. Given $c_0 \in \on{Imm}(S^1,\R^2)$ and $N>0$ there exists a 
constant $C = C(c_0, N)$ such that for all $c_1, c_2 \in \on{Imm}(S^1,\R^2)$ with $\on{dist}^G(c_0, c_i) < N$, 
$i=1,2$, we have
\[
\left| \ell_{c_1}^{-1/2} - \ell_{c_2}^{-1/2} \right| < C(c_0,N)\, \on{dist}^G(c_1,c_2)\,.
\]
In particular the function $\ell_c^{-1/2} : (\on{Imm}(S^1,\R^2), \on{dist}^G) \to \R_{>0}$ 
is Lipschitz on every metric ball.
\end{lemma}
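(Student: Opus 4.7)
The plan is to prove the stronger statement that $\ell_c^{-1/2}$ is globally $\tfrac{\sqrt{A}}{4}$-Lipschitz on $(\on{Imm}(S^1,\R^2), \on{dist}^G)$; the claim then follows at once with $C(c_0, N) = \sqrt{A}/4$, independent of $c_0$ and $N$. The strategy rests on a clean cancellation: the $\sqrt{\ell_c}$ appearing in the $L^\infty$ Poincar\'e inequality of Lem.\ \ref{lem:poincare} will exactly absorb the $\ell_c^{-1/2}$ produced by the chain rule when differentiating $\ell_c^{-1/2}$.

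First I will fix a smooth path $c(t,\th)$ between $c_1$ and $c_2$ and use the variational formula $\p_t |c'| = \langle D_s c_t, v\rangle |c'|$ to write
\[
\p_t \ell_c^{-1/2} = -\tfrac{1}{2} \ell_c^{-3/2} \int_{S^1} \langle D_s c_t, v\rangle \ud s.
\]
The crude bound $\left| \int_{S^1} \langle D_s c_t, v\rangle \ud s \right| \leq \ell_c \|D_s c_t\|_{L^\infty}$ then yields $|\p_t \ell_c^{-1/2}| \leq \tfrac{1}{2} \ell_c^{-1/2} \|D_s c_t\|_{L^\infty}$.

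Next I will invoke the second inequality of Lem.\ \ref{lem:poincare} (which uses that $D_s c_t$ has zero integral on $S^1$) to estimate $\|D_s c_t\|_{L^\infty} \leq \tfrac{\sqrt{\ell_c}}{2} \|D_s^2 c_t\|_{L^2(ds)}$. Because $n \geq 2$, Lem.\ \ref{lem:poincare2} combined with the lemma's hypothesis gives $\|D_s^2 c_t\|_{L^2(ds)}^2 \leq \|c_t\|_{L^2(ds)}^2 + \|D_s^n c_t\|_{L^2(ds)}^2 \leq A\, G_c(c_t,c_t)$. Multiplying the three estimates, the $\sqrt{\ell_c}$ and $\ell_c^{-1/2}$ factors cancel and I arrive at the curve-free estimate
\[
|\p_t \ell_c^{-1/2}| \leq \tfrac{\sqrt{A}}{4} \sqrt{G_c(c_t,c_t)}.
\]

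Integrating over $t \in [0,1]$ and taking the infimum over smooth paths then yields $|\ell_{c_1}^{-1/2} - \ell_{c_2}^{-1/2}| \leq \tfrac{\sqrt{A}}{4} \on{dist}^G(c_1, c_2)$, as desired. There is no serious analytic obstacle; the only point to be careful about is the choice of route for bounding $\p_t \ell_c^{-1/2}$. The more obvious Cauchy--Schwarz estimate $|\p_t \ell_c^{-1/2}| \leq \tfrac{1}{2} \ell_c^{-1} \|D_s c_t\|_{L^2(ds)}$ produces an uncontrolled $\ell_c^{-1}$ factor and would force an awkward Gronwall-type bootstrap on metric balls; passing instead through $L^\infty$ and using Poincar\'e is what unlocks the cancellation and delivers the global Lipschitz bound directly.
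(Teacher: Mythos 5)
Your proof is correct, and it takes a genuinely different (and sharper) route through the middle estimate than the paper does. The paper bounds $\bigl|\p_t \ell_c^{-1/2}\bigr|$ by Cauchy--Schwarz, getting $\tfrac12 \ell_c^{-1}\|D_s c_t\|_{L^2(ds)}$, and then iterates the $L^2$-Poincar\'e inequality (third item of Lem.~\ref{lem:poincare}) all the way up to $D_s^n$, which produces the factor $(\ell_c/2)^{n-1}$ and leaves a residual $\ell_c^{n-2}$; this residual must then be controlled on the metric ball via Cor.~\ref{cor:ell_lip}, which is exactly why the paper's constant depends on $c_0$ and $N$ (except in the special case $n=2$, where $\ell_c^{n-2}=1$ and the paper's constant coincides with yours). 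You instead pass through $L^\infty$: the crude bound $\bigl|\int_{S^1}\langle D_s c_t, v\rangle\, \ud s\bigr|\le \ell_c\|D_s c_t\|_{L^\infty}$, the second item of Lem.~\ref{lem:poincare}, and Lem.~\ref{lem:poincare2} combine so that the powers of $\ell_c$ cancel exactly, yielding the uniform constant $\sqrt{A}/4$ with no dependence on $n$, $c_0$, or $N$. This is precisely the estimate chain the paper deploys later in \eqref{eq:Ds_ct_explicit_bound} for Lem.~\ref{lem:lenpt_inv_bound}; applying it here strengthens the lemma from ``Lipschitz on every metric ball'' to globally Lipschitz (on each connected component of $\on{Imm}(S^1,\R^2)$, the statement being vacuous between components where $\on{dist}^G=\infty$). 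All the individual steps check out: the zero-mean hypothesis needed for the $L^\infty$-Poincar\'e bound holds since $\int_{S^1} D_s c_t\, \ud s = \int_{S^1}\p_\th c_t\, \ud\th = 0$, and $k=2\le n$ is exactly where the hypothesis $n\ge 2$ enters.
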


\begin{proof}
Fix $c_1, c_2$ with $\on{dist}^G(c_0, c_i) < N$ and let $c(t,\th)$ be a path between them, 
such that $\on{dist}^G(c_0, c(t)) < 2N$. Then
\begin{align*}
\p_t \left( \ell_c^{-1/2} \right) &= - \tfrac 12 \ell_c^{-3/2} \int_{S^1} \langle D_s c_t, v \rangle\, |c'| \ud \th\,,
\end{align*}
and by taking absolute values
\begin{align*}
\left| \p_t \left( \ell_c^{-1/2} \right) \right|&\leq \tfrac 12 \ell_c^{-3/2} \int_{S^1} \left| \langle D_s c_t, v \rangle\right| \, |c'| \ud \th \\
&\leq \tfrac 12 \ell_c^{-3/2} \sqrt{\int_{S^1} |c'| \ud \th} \sqrt{\int_{S^1} \langle D_s c_t, v \rangle^2\, |c'| \ud \th} \\
&\leq \tfrac 12 \ell_c^{-1} \| D_s c_t \|_{L^2(ds)} 
\leq \tfrac 12 \ell_c^{-1} \left( 
\frac{\ell_c}{2} \right)^{n-1} \| D_s^n c_t \|_{L^2(ds)}
\quad\text{  by \ref{lem:poincare},} \\
&\leq 2^{-n}\, \ell_c^{n-2}\, \sqrt{A}\, \sqrt{G_c(c_t, c_t)} \,.
\end{align*}
By Cor.\ \ref{cor:ell_lip} the length $\ell_c$ is bounded along the path $c(t,\th)$ and and since $n\geq 2$ so is $\ell_c^{n-2}$. Thus
\begin{align*}
\left| \ell_{c_1}^{-1/2} - \ell_{c_2}^{-1/2} \right|
&\leq \int_{0}^1 \left| \p_t \left( \ell_c^{-1/2} \right) \right| \ud t \\
&\leq 2^{-n}\sqrt{A} \int_0^1 \ell_c^{n-2} \sqrt{G_c(c_t,c_t)} \ud t \\
&\lesssim_{c_0,N} \on{Len}^G(c)\,;\quad\text{  see \ref{notation:lesssim} for notation.}
\end{align*}
After taking the infimum over all paths connecting $c_1$ and $c_2$ we obtain
\[
\left| \ell_{c_1}^{-1/2} - \ell_{c_2}^{-1/2} \right| \lesssim_{c_0,N} \on{dist}^G(c_1, c_2)\,. 
	\qedhere
\]
\end{proof}

\begin{remark*}
We can compute the constant $C=C(c_0,N)$ in Lem.\ \ref{lem:ellc_inv_lipschitz} explicitly. Indeed from
\[
\left| \ell_{c_1}^{-1/2} - \ell_{c_2}^{-1/2} \right|
\leq 2^{-n}\sqrt{A} \int_0^1 \ell_c^{n-2} \sqrt{G_c(c_t,c_t)} \ud t\,,
\]
we obtain, following the proof,
\[
\left| \ell_{c_1}^{-1/2} - \ell_{c_2}^{-1/2} \right|
\leq 2^{-n}\sqrt{A} \left( \sup_{\on{dist}^G(c,c_0) < N} \ell_c^{n-2} \right)
\on{dist}^G(c_1,c_2)\,.
\]
Now, using \eqref{eq:ell_lip_const}, we can estimate $\ell_c$ via
\[
\sqrt{\ell_c} \leq 
\sqrt{\ell_{c_0}} + \left| \sqrt{\ell_c} - \sqrt{\ell_{c_0}} \right| \leq
\sqrt{\ell_{c_0}} + \frac 12 \sqrt{A} \on{dist}^G(c,c_0) \leq
\sqrt{\ell_{c_0}} + \frac 12 \sqrt{A}N\,.
\]
Thus we can use
\[
C(c_0, N) = 2^{-n} \sqrt{A} \left(\sqrt{\ell_{c_0}} + \tfrac 12 \sqrt{A}N\right)^{2n-4}
\]
for the constant.
\end{remark*}

\begin{corollary}
\label{cor:ellc_inv_bounded}
Let $G$ satisfy the assumptions of Lem.\ \ref{lem:ellc_inv_lipschitz}. Then $\ell_c^{-1}$ is bounded on every metric ball of $(\on{Imm}(S^1,\R^2), \on{dist}^G)$.
\end{corollary}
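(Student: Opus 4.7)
The plan is to derive this directly from Lem.~\ref{lem:ellc_inv_lipschitz} together with the elementary fact that a Lipschitz function on a bounded metric set is itself bounded.

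Fix $c_0 \in \on{Imm}(S^1,\R^2)$ and $N > 0$, and consider the metric ball $B = \{c : \on{dist}^G(c, c_0) < N\}$. For any $c \in B$, I would apply the triangle inequality to write
\[
\ell_c^{-1/2} \leq \ell_{c_0}^{-1/2} + \left| \ell_c^{-1/2} - \ell_{c_0}^{-1/2} \right|\,,
\]
and then use Lem.~\ref{lem:ellc_inv_lipschitz} to bound the second term by $C(c_0,N)\, \on{dist}^G(c, c_0) < C(c_0,N)\, N$. This gives the uniform estimate
\[
\ell_c^{-1/2} \leq \ell_{c_0}^{-1/2} + C(c_0,N)\, N \qquad \text{for all } c \in B\,.
\]

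Squaring both sides then yields a uniform upper bound on $\ell_c^{-1}$ over $B$. Since $c_0$ and $N$ were arbitrary, $\ell_c^{-1}$ is bounded on every metric ball, as claimed. There is no real obstacle here; the entire content of the corollary is already packaged into the Lipschitz estimate of the preceding lemma, and this argument is simply the observation that a locally Lipschitz real-valued function is locally bounded.
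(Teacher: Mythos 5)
Your proposal is correct and follows essentially the same route as the paper: the triangle inequality $\ell_c^{-1/2} \leq \ell_{c_0}^{-1/2} + \bigl| \ell_c^{-1/2} - \ell_{c_0}^{-1/2} \bigr|$ combined with the Lipschitz estimate of Lem.~\ref{lem:ellc_inv_lipschitz} to bound $\ell_c^{-1/2}$, and hence $\ell_c^{-1}$, uniformly on the ball. No issues.
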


\begin{proof}
Fix $c_0 \in \on{Imm}(S^1,\R^2)$ and $N>0$ and let $c \in \on{Imm}(S^1,\R^2)$ with $\on{dist}^G(c_0, c) < N$. Then
\[
\ell_c^{-1/2} \leq \ell_{c_0}^{-1/2} + \left| \ell_{c_0}^{-1/2} - \ell_{c}^{-1/2} \right| \lesssim_{c_0,N} \ell_{c_0} + \on{dist}^G(c_0, c) \lesssim_{c_0, N} 1\,,
\]
and thus $\ell_c^{-1/2}$ is bounded on metric balls, which implies that $\ell_c\i$ is bounded as well.
\end{proof}

The variations of the turning angle $\al$ and of $\log |c'|$ are given by
\begin{align*}
D_{c,h} \left(\log |c'| \right) &= \langle D_s h, v \rangle \\
D_{c,h} \al &= \langle D_s h, n \rangle\,.
\end{align*}
As a preparation for the proof of Thm.\ \ref{thm:ka_bound} we compute explicit expressions for the variations of their derivatives.

\begin{lemma}
\label{lem:ka_log_var}
Let $c \in \on{Imm}(S^1, \R^2)$, $h \in T_c \on{Imm}(S^1, \R^2)$ and $k \geq 0$. Then
\begin{align}
\label{eq:Dk_logcp_var}
D_{c,h} \left(D_s^k \log |c'|\right) &= D_s^k \langle D_s h, v \rangle 
- \sum_{j=0}^{k-1} \binom{k}{j+1} \left( D_s^{k-j} \log |c'| \right) D_s^j \langle D_s h, v\rangle \\
\label{eq:Dk_al_var}
D_{c,h} \left(D_s^k \al\right) &= D_s^k \langle D_s h, n \rangle 
- \sum_{j=0}^{k-1} \binom{k}{j+1} \left( D_s^{k-j} \al \right) D_s^j \langle D_s h, v\rangle\,.
\end{align}
\end{lemma}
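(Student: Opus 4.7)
My plan is a straightforward induction on $k$, where the inductive step uses Lemma \ref{lem:Ds_rmap} to commute $D_{c,h}$ past one copy of $D_s$, and Pascal's rule $\binom{k}{i+1}+\binom{k}{i}=\binom{k+1}{i+1}$ to combine the two resulting sums.

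The base case $k=0$ is just the two variational formulas $D_{c,h}\log|c'| = \langle D_s h, v\rangle$ and $D_{c,h}\alpha = \langle D_s h, n\rangle$ collected in Section \ref{variational-formulae} (the first follows from $D_{c,h}(|c'|^k) = k\langle D_s h, v\rangle|c'|^k$ specialized at the derivative of $\log$ at $1$, or equivalently by direct calculation). For the inductive step on \eqref{eq:Dk_logcp_var}, I apply Lemma \ref{lem:Ds_rmap} to $F(c)=D_s^k\log|c'|$, obtaining
\[
D_{c,h}(D_s^{k+1}\log|c'|) = D_s\bigl(D_{c,h}(D_s^k\log|c'|)\bigr) - \langle D_s h, v\rangle\, D_s^{k+1}\log|c'|\,,
\]
and substitute the inductive hypothesis. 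Distributing $D_s$ over the products via the Leibniz rule yields two sums that I reindex so that every term is written in the canonical form $(D_s^{k+1-j}\log|c'|)D_s^j\langle D_s h, v\rangle$. The term coming from differentiating the $D_s^{k-j}\log|c'|$ factor contributes the binomial coefficient $\binom{k}{j+1}$, and the term coming from differentiating $D_s^j\langle D_s h, v\rangle$ contributes $\binom{k}{j}$ after shifting the index; Pascal's rule then produces the desired $\binom{k+1}{j+1}$.

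The only bookkeeping subtlety is at the endpoints: the extra correction $-\langle D_s h, v\rangle\, D_s^{k+1}\log|c'|$ must be combined with the $j=0$ contribution to give the coefficient $\binom{k+1}{1}=k+1$, and the new top term at $j=k$ must be supplied entirely by the reindexed second sum with coefficient $\binom{k}{k}=\binom{k+1}{k+1}=1$. A quick sanity check at $k=1$ recovers the formula $D_{c,h}(D_s\log|c'|) = D_s\langle D_s h, v\rangle - (D_s\log|c'|)\langle D_s h, v\rangle$, which matches a direct differentiation.

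The proof of \eqref{eq:Dk_al_var} is line-for-line identical: the only variational input at the base case is $D_{c,h}\alpha = \langle D_s h, n\rangle$ in place of $\langle D_s h, v\rangle$, while the inductive step relies solely on Lemma \ref{lem:Ds_rmap} applied to $D_s\circ F$, which always produces the factor $\langle D_s h, v\rangle\, D_s F(c)$ independently of which scalar function $F$ we started with. There is no genuine obstacle in this lemma --- the work is entirely combinatorial --- so the main thing to get right is the reindexing of the two sums and the endpoint contributions described above.
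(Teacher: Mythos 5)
Your proof is correct, and it rests on the same key ingredient as the paper's: Lemma \ref{lem:Ds_rmap} together with the base-case variational formulas from Section \ref{variational-formulae}. The only real difference is how the combinatorics are organized. The paper first records the closed-form variation of the operator, $D_{c,h}(D_s^k) = -\sum_{j=0}^{k-1} D_s^j \circ \langle D_s h, v\rangle \circ D_s^{k-j}$, applies it in one shot, expands each $D_s^j(\cdot)$ by the Leibniz rule, swaps the order of summation, and collapses the resulting double sum with the hockey-stick identity $\sum_{j=i}^{k-1}\binom{j}{i}=\binom{k}{i+1}$; you instead induct directly on the final formula and absorb one Leibniz step per stage via Pascal's rule. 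Since the hockey-stick identity is precisely the telescoped form of Pascal's rule, your induction is the unrolled version of the paper's computation --- the endpoint bookkeeping you describe (the correction term $-\langle D_s h, v\rangle D_s^{k+1}\log|c'|$ supplying the missing $1$ in $\binom{k+1}{1}=k+1$ at $j=0$, and the reindexed sum supplying $\binom{k}{k}=1$ at $j=k$) checks out, and the argument for \eqref{eq:Dk_al_var} carries over verbatim exactly as you say. Your route is arguably a bit more self-contained in that it never needs the operator identity for $D_{c,h}(D_s^k)$ as an intermediate statement, at the cost of not producing that identity, which the paper reuses implicitly in the proof of \eqref{eq:H_gradient_Sobolev}.
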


\begin{proof} Recall Lem.\ \ref{lem:Ds_rmap}:
if $F: \on{Imm}(S^1, \R^2) \to C^\infty(S^1, \R^d)$ is smooth then
\[
D_{c,h} \left(D_s\o F\right) = D_s\left(D_{c,h} F \right) - \langle D_s h, v \rangle D_s F(c)\,.
\]
For $k=0$, by Sect. \ref{variational-formulae} we have 
\begin{align*}
D_{c,h} \left(\log |c'| \right) &= \langle D_s h, v \rangle\,, 
\quad
D_{c,h} \al = \langle D_s h, n \rangle\,,
\quad
D_{c,h}D_s = -\langle D_sh,v \rangle D_s\,,
\\
D_{c,h}(D_s^k) &= - \sum_{j=0}^{k-1} D_s^j \o \langle D_sh,v \rangle \o D_s^{k-j}\,.
\end{align*}
Thus we get
\[
D_{c,h} \left(D_s^k \log |c'|\right) = D_s^k \langle D_s h, v \rangle - \sum_{j=0}^{k-1} D_s^j \Big( \langle D_s h, v\rangle \big( D_s^{k-j} \log |c'| \big) \Big)\,.
\]
Next we use the identity \cite[(26.3.7)]{NIST2010},
\[
\sum_{j=i}^{k-1} \binom{j}{i} = \binom{k}{i+1} \,,
\]
and the product rule for differentiation to obtain
\begin{align*}
D_{c,h} \left(D_s^k \log |c'|\right) &= D_s^k \langle D_s h, v \rangle - \sum_{j=0}^{k-1} \sum_{i=0}^j \binom{j}{i} \left( D_s^{k-j+j-i} \log |c'| \right) D_s^i \langle D_s h, v\rangle \\
&= D_s^k \langle D_s h, v \rangle - \sum_{i=0}^{k-1} \sum_{j=i}^{k-1} \binom{j}{i} \left( D_s^{k-i} \log |c'| \right) D_s^i \langle D_s h, v\rangle \\
&= D_s^k \langle D_s h, v \rangle - \sum_{i=0}^{k-1} \binom{k}{i+1} \left( D_s^{k-i} \log |c'| \right) D_s^i \langle D_s h, v\rangle\,,
\end{align*}
which completes the first part of the proof. 
Along the same lines we also get the variation of $D_s^k \al$. 
\end{proof}

\begin{theorem}
\label{thm:ka_bound}
Assume that the metric $G$ on $\on{Imm}(S^1,\R^2)$ satisfies
\begin{equation}
\label{eq:G_stronger_Dk2}
\int_{S^1} |h|^2 + |D_s^{n} h|^2 \ud s \leq A\, G_c(h,h)\,.
\end{equation}
for some $n\geq 2$ and some $A>0$.
For each $c_0 \in \on{Imm}(S^1,\R^2)$ and $N>0$ there exists a constant $C = C(c_0,N)$ such that for all $c_1, c_2 \in \on{Imm}(S^1,\R^2)$ with $\on{dist}^G(c_0,c_i) < N$ and all $0\leq k \leq n-2$ we have
\begin{gather*}
\left\| (D_{c_1}^k\ka_1) \sqrt{|c_1'|} - (D_{c_2}^k\ka_2) \sqrt{|c_2'|} \right\|_{L^2(d\th)} \leq C \on{dist}^G(c_1, c_2) \\
\left\| (D_{c_1}^{k+1}\log |c_1'|) \sqrt{|c_1'|} - (D_{c_2}^{k+1}\log |c_2'|) \sqrt{|c_2'|} \right\|_{L^2(d\th)} \leq C \on{dist}^G(c_1, c_2)\,.
\end{gather*}
In particular the functions 
\begin{align*}
(D_s^k \ka) \sqrt{|c'|} &: (\on{Imm}(S^1,\R^2), \on{dist}^G) \to L^2(S^1,\R) \\
(D_s^{k+1} \log|c'|) \sqrt{|c'|} &: (\on{Imm}(S^1,\R^2), \on{dist}^G) \to L^2(S^1,\R)
\end{align*}
are continuous and Lipschitz continuous on every metric ball.
\end{theorem}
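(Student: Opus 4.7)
I will argue by induction on $k$, proving simultaneously that $F_k(c):=(D_s^k\ka)\sqrt{|c'|}$ and $G_k(c):=(D_s^{k+1}\log|c'|)\sqrt{|c'|}$ are Lipschitz in $\on{dist}^G$ on every metric ball. Since Lipschitz implies locally bounded, once the statement is established for indices $<k$ we have uniform control of $\|D_s^{k'}\ka\|_{L^2(ds)}$ and $\|D_s^{k'+1}\log|c'|\|_{L^2(ds)}$ for $k'<k$ on the ball $B_{3N}(c_0)$ in which the relevant paths will live.

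Fix $c_1,c_2\in B_N(c_0)$, let $c(t,\th)$ be a smooth path joining them of length $<2N$, and set $h=c_t$. Combining $\p_t\sqrt{|c'|}=\tfrac12\langle D_sh,v\rangle\sqrt{|c'|}$ with Lem.\ \ref{lem:ka_log_var} (using $D_s^k\ka=D_s^{k+1}\al$) produces explicit formulas for $\p_tF_k$ and $\p_tG_k$: a top-order term $(D_s^{k+1}\langle D_sh,n\rangle)\sqrt{|c'|}$, respectively $(D_s^{k+1}\langle D_sh,v\rangle)\sqrt{|c'|}$, plus cross-terms of the schematic form $(D_s^{k-j}\ka)(D_s^j\langle D_sh,v\rangle)\sqrt{|c'|}$ or $(D_s^{k-j+1}\log|c'|)(D_s^j\langle D_sh,v\rangle)\sqrt{|c'|}$ for $0\leq j\leq k$. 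Passing to $L^2(d\th)$ turns $\sqrt{|c'|}\,\cdot$ into $L^2(ds)$. Expanding $D_s^{k+1}\langle D_sh,n\rangle$ via the Leibniz rule and $D_sn=-\ka v$ brings in at worst $D_s^{k+2}h$, bounded by $\sqrt{A\,G_c(h,h)}$ through Lem.\ \ref{lem:poincare2} and \eqref{eq:G_stronger_Dk2} precisely when $k\leq n-2$; the accompanying $\ka$-derivatives of order $\leq k$ are handled as below.

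For each cross-term $(D_s^{k-j}\ka)(D_s^j\langle D_sh,v\rangle)$ I apply H\"older. When $j\geq 1$ I put $L^2(ds)$ on the $\ka$-factor (order $k-j\leq k-1$, hence controlled by the inductive hypothesis) and $L^\infty$ on the $h$-factor (reduced via Lem.\ \ref{lem:poincare} to $L^2(ds)$-norms of $D_s^{j+1}h$ and $D_s^{j+2}h$, both $\leq\sqrt{A\,G_c(h,h)}$ since $j\leq k\leq n-2$). The borderline summand $j=0$ forces the factor $\|D_s^k\ka\|_{L^2(ds)}=\|F_k(t)\|_{L^2(d\th)}$, and the piece of $\p_tF_k$ coming from $\p_t\sqrt{|c'|}$ behaves the same way; the $G_k$-equation produces $\|G_k(t)\|_{L^2(d\th)}$ by symmetry. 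The powers of $\ell_c$ and $\ell_c\i$ that emerge from Lem.\ \ref{lem:poincare} are controlled on $B_{3N}(c_0)$ by Cor.\ \ref{cor:ell_lip} and Cor.\ \ref{cor:ellc_inv_bounded}. The resulting inequality is
\[
\|\p_tF_k\|_{L^2(d\th)}+\|\p_tG_k\|_{L^2(d\th)} \leq \bigl(\al_k+\be_k\bigl(\|F_k(t)\|_{L^2(d\th)}+\|G_k(t)\|_{L^2(d\th)}\bigr)\bigr)\sqrt{G_c(c_t,c_t)}\,,
\]
where $\al_k,\be_k$ depend only on $c_0,N$ and on the inductive constants. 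Cor.\ \ref{cor:gronwall_applied} with $G(t)=\sqrt{G_c(c_t,c_t)}$ (whose integral is $\leq\on{Len}^G(c)\leq 2N$) first yields a uniform bound on $\|F_k(t)\|_{L^2(d\th)}+\|G_k(t)\|_{L^2(d\th)}$ along the path; substituting back gives $\|F_k(1)-F_k(0)\|_{L^2(d\th)}+\|G_k(1)-G_k(0)\|_{L^2(d\th)}\lesssim_{c_0,N}\on{Len}^G(c)$. Taking the infimum over paths closes the induction.

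The main obstacle is the bookkeeping of the cross-terms: one must verify that for every index combination appearing in Lem.\ \ref{lem:ka_log_var} there is a H\"older split placing at most $n$ arc-length derivatives on $h$ and at most $k$ on $\ka$ or $\log|c'|$, with at most one factor of $\|F_k\|_{L^2(d\th)}+\|G_k\|_{L^2(d\th)}$ tolerated in the Gronwall loop. The constraint $k\leq n-2$ in the statement is precisely the range in which such a balanced split is always available; continuity follows from Lipschitz continuity at once.
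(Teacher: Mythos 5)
Your proposal is correct and follows essentially the same route as the paper: induction on $k$, the variational formulas of Lem.~\ref{lem:ka_log_var}, Poincar\'e-based $L^2(ds)$/$L^\infty$ splits of the cross-terms with constants controlled by Cor.~\ref{cor:ell_lip} and Cor.~\ref{cor:ellc_inv_bounded}, and Gronwall along a path followed by an infimum over paths. The only (harmless) imprecision is that the $L^\infty$-bound on $D_s^j\langle D_s h, v\rangle$ for $j=k$ also picks up $\|D_s^k\ka\|_{L^2(ds)}$ through the $D_s^{k+1}v$ term in the Leibniz expansion, not just $\|D_s^{j+1}h\|$ and $\|D_s^{j+2}h\|$; but, as you anticipate, every term stays linear in $\|F_k\|+\|G_k\|$, so the Gronwall loop closes exactly as in the paper's Step~2.
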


\begin{proof}
We have $\on{dist}^G(c_1,c_2) < 2N$ by the triangle inequality. 
Let $c(t,\th)$ be a path between $c_1$ and $c_2$ with $\on{Len}^G(c) \leq 3N$. Then
\begin{align*}
\on{dist}^G(c_0, c(t)) &\leq \on{dist}^G(c_0, c_1) + \on{dist}^G(c_1, c(t)) \\
&\leq N + \on{Len}^G(c|_{[0,t]}) \\
&\leq N + 3N \leq 4N\,;
\end{align*}
thus any path of this kind remains within a ball of radius $4N$ around $c_0$.

We will prove the theorem for each $n$ by induction over $k$. The proof of the continuity of 
$(D_s^k \ka)\sqrt{|c'|}$ does not depend on the continuity of $(D_s^{k+1}\log|c'|) \sqrt{|c'|}$. 
Thus, even if we prove both statements in parallel, 
we will assume that we have established the 
continuity and local Lipschitz continuity of $ (D_s^k \ka) \sqrt{|c'|}$
when estimating 
$\|\p_t \big((D_s^{k+1} \log |c'|) \sqrt{|c'|}\big)\|_{L^2(d\th)}$ below; in particular we will need 
that 
\begin{equation}\label{eq:kabounded}
\big\| D_s^k \ka\big\|_{L^2(ds)}\quad\text{ remains bounded along the path.}
\end{equation}    

The proof consists of two steps. First we show that the following estimates hold along $c(t,\th)$:
\begin{align}
\label{eq:pt_ka}
\left \|\p_t \big((D_s^k \ka) \sqrt{|c'|}\big)\right\|_{L^2(d\th)} 
&\lesssim_{c_0, N} \left(1 + \| D_s^k \ka \|_{L^2(ds)}\right) \sqrt{G_c(c_t, c_t)} \\
\label{eq:pt_logcp}
\left \|\p_t \big((D_s^{k+1} \log |c'|) \sqrt{|c'|}\big)\right\|_{L^2(d\th)} 
&\lesssim_{c_0, N} \left(1 + \| D_s^{k+1} \log |c'|\|_{L^2(ds)}\right) \sqrt{G_c(c_t, c_t)}\,.
\end{align}
Then we apply Gronwall's inequality to prove the theorem.

{\bf Step 1.} For $k=0$ we have
\begin{align*}
\p_t \big( \ka \sqrt{|c'|}\big) &=
\langle D_s^2 c_t, n \rangle \sqrt{|c'|} - \tfrac 32 \ka \langle D_s c_t, v \rangle \sqrt{|c'|} \\
\p_t \big( (D_s \log |c'|) \sqrt{|c'|}\big) &=
\langle D_s^2 c_t, v \rangle \sqrt{|c'|} + \ka \langle D_s c_t, n \rangle \sqrt{|c'|}\, -
\\&\qquad\qquad
- \tfrac 12 (D_s \log|c'|)\langle D_s c_t, v \rangle \sqrt{|c'|}\,,
\end{align*}
and therefore
\begin{align*}
\left \|\p_t \big(\ka \sqrt{|c'|}\big)\right\|_{L^2(d\th)} &\leq
\| D_s^2 c_t \|_{L^2(ds)} + \tfrac 32 \| \ka \|_{L^2(ds)} \| D_s c_t \|_{L^\infty} \\
\left \|\p_t \big((D_s \log |c'|) \sqrt{|c'|}\big)\right\|_{L^2(d\th)} &\leq
\| D_s^2 c_t \|_{L^2(ds)} + \| \ka \|_{L^2(ds)} \| D_s c_t \|_{L^\infty} + {}\\
&\qquad {}+\tfrac12 \| D_s \log |c'| \|_{L^2(ds)} \| D_s c_t \|_{L^\infty}\,.
\end{align*}
Note that the length $\ell_c$ is bounded along $c(t,\th)$ by Cor.\ \ref{cor:ell_lip}. Using the 
Poincar\'e inequalities from Lem.\ \ref{lem:poincare} and assumption \eqref{eq:G_stronger_Dk2} we obtain
\begin{align*}
\left\|\p_t \big(\ka \sqrt{|c'|}\big)\right\|_{L^2(d\th)} &\lesssim_{c_0, N}
\left( 1 +  \| \ka \|_{L^2(ds)} \right) \sqrt{G_c(c_t, c_t)} \\
\left\|\p_t \big((D_s\log |c'|) \sqrt{|c'|}\big)\right\|_{L^2(d\th)} &\lesssim_{c_0, N}
\left( 1 + \| D_s \log |c'| \|_{L^2(ds)}\right) \sqrt{G_c(c_t, c_t)}\,.
\end{align*}
For the second estimate we used the boundedness of $\|\ka \|_{L^2(ds)}$ from \eqref{eq:kabounded}. 
This concludes the proof of step 1 for $k=0$.

Now consider $k>0$ and assume that the theorem has been shown for $k-1$. Along $c(t,\th)$ the following objects are bounded
\begin{itemize}
\item
$\ell_c$ by Cor.\ \ref{cor:ell_lip}, allowing us to use Poincar\'e inequalities,
\item
$\| D_s^{k-1} \ka \|_{L^2(ds)}$ and $\| D_s^k \log |c'| \|_{L^2(ds)}$ by induction, and
\item
$\| D_s^j \ka \|_{L^\infty}$ and $\| D_s^{j+1} \log |c'| \|_{L^\infty}$ for $0 \leq j \leq k-2$ via Poincar\'e inequalities.
\end{itemize}

We also have the following bounds, which are valid for both $v$ and $n$:
\begin{itemize}
\item
$\| D_s^j \langle D_s c_t, v \rangle \|_{L^2(ds)} \lesssim_{c_0, N} \sqrt{G_c(c_t,c_t)}$ for $0\leq j \leq k\,.$

This is clear for $j\leq k-1$, since the highest derivative of $\ka$ that appears due to the Frenet 
equations is $D_s^{k-2} \ka$ and thus all terms involving $\ka$ can be bounded by the 
$L^\infty$-norm. For $j=k$ we have  
\begin{align*}
D_s^k \langle D_s c_t, v \rangle = \langle D_sc_t, D_s^k v \rangle + \sum_{j=1}^k \binom{k}{j} \langle D_s^{j+1} c_t, D_s^{k-j} v \rangle
\end{align*}
and 
\[ 
D_s^k v = (D_s^{k-1} \ka) n + \text{lower order derivatives in $\ka$}\,.
\]
Thus
\[
\| D_s^k v \|_{L^2(ds)} \leq \| D_s^{k-1}\ka\|_{L^2(ds)} + \dots \lesssim_{c_0, N} 1\,.
\]
Hence we get
\begin{align*}
\| D_s^k \langle D_s c_t, v \rangle \|_{L^2(ds)} &\leq
\| D_s c_t \|_{L^\infty} \| D_s^k v \|_{L^2(ds)}  + \sum_{j=1}^k \binom{k}{j}
\| D^{j+1}_s c_t \|_{L^2(ds)} \| D_s^{k-j} v \|_{L^\infty} \\
&\lesssim_{c_0, N} \sqrt{G_c(c_t, c_t)}\,.
\end{align*}

\item
$\| D_s^{k+1} \langle D_s c_t, v \rangle \|_{L^2(ds)} \lesssim_{c_0, N} (1 + \| D_s^k \ka \|_{L^2(ds)} )\sqrt{G_c(c_t,c_t)}\,.$

We obtain this bound from
\begin{align*}
D_s^{k+1} \langle D_s c_t, v \rangle &=
\langle D_s^{k+2} c_t, v\rangle + \langle D_sc_t, D_s^{k+1} v \rangle +
\sum_{j=1}^k \binom{k+1}j \langle D_s^{k+2-j} c_t, D_s^{j} v\rangle\,.
\end{align*}
Taking the $L^2(ds)$-norm we get
\begin{multline*}
\| D_s^{k+1} \langle D_s c_t, v \rangle \|_{L^2(ds)} \leq
\| D_s^{k+2} c_t \|_{L^2(ds)} + \| D_s c_t \|_{L^\infty} \| D_s^{k+1} v \|_{L^2(ds)} +
\\
+
\sum_{j=1}^k \binom{k+1}j \| D_s^{k+2-j} c_t \|_{L^2(ds)} \| D_s^{j} v \|_{L^\infty} 
\\
\lesssim_{c_0,N}\! \sqrt{G_c(c_t, c_t)} + 
\left(1 + \| D_s^k \ka \|_{L^2(ds)}\right) \sqrt{G_c(c_t, c_t)} +
\sqrt{G_c(c_t, c_t)}\,,
\end{multline*}
thus showing the claim.
\end{itemize}

Equation \eqref{eq:Dk_al_var} from Lem.\ \ref{lem:ka_log_var}, rewritten for $\ka$, is
\[
D_{c,h} \left(D_s^k \ka \right) = D_s^{k+1} \langle D_s h, n \rangle 
- \sum_{j=0}^{k} \binom{k+1}{j+1} \left( D_s^{k-j} \ka \right) D_s^j \langle D_s h, v\rangle\,.
\]
Thus we get
\begin{multline*}
\p_t \left((D_s^k \ka) \sqrt{|c'|}\right) =
(D_s^{k+1} \langle D_s c_t, n \rangle) \sqrt{|c'|} - (k+\tfrac 12) (D_s^k \ka) \,\langle D_s c_t, v \rangle \sqrt{|c'|} -{} \\
{}- \binom{k+1}{2} \left(D_s^{k-1} \ka\right)\left(D_s \langle D_s c_t, v \rangle\right) \sqrt{|c'|}
-\sum_{j=2}^k \binom{k+1}{j+1} \left(D_s^{k-j} \ka\right)\, D_s^j \langle D_s c_t, v \rangle \sqrt{|c'|}\,,
\end{multline*}
and hence, by taking norms,
\begin{align*}
\left \|\p_t ((D_s^k \ka) \sqrt{|c'|})\right\|_{L^2(d\th)} &\leq
\left\| D_s^{k+1} \langle D_s c_t, n \rangle \right\|_{L^2(ds)} 
+ (k+\tfrac 12) \left\| D_s^k \ka \right\|_{L^2(ds)} 
\Big\| \langle D_s c_t, v \rangle \Big\|_{L^\infty} \\
&\qquad{}+ \binom{k+1}{2} \left\| D_s^{k-1} \ka\right\|_{L^2(ds)} 
\Big\| D_s \langle D_s c_t, v \rangle \Big\|_{L^\infty} \\
&\qquad{}+\sum_{j=2}^k \binom{k+1}{j+1} \left\| D_s^{k-j} \ka\right\|_{L^\infty} 
\Big\| D_s^j \langle D_s c_t, v \rangle\Big\|_{L^2(ds)}\\
&\lesssim_{c_0, N} \left(1 + \| D_s^k \ka \|_{L^2(ds)}\right) \sqrt{G_c(c_t, c_t)}\,.
\end{align*}
For $(D_s^{k+1} \log |c'|) \sqrt{|c'|}$ we proceed similarly. The time derivative is
\begin{multline*}
\p_t ((D_s^{k+1} \log |c'|) \sqrt{|c'|}) =
D_s^{k+1} \langle D_s c_t, v \rangle \sqrt{|c'|} -
\\
- (k+\tfrac 12) (D_s^{k+1} \log |c'|)\, \langle D_s c_t, v \rangle \sqrt{|c'|} 
\\
- \binom{k+1}{2} \left(D_s^{k} \log |c'|\right)\, D_s \langle D_s c_t, v \rangle \sqrt{|c'|} 
\\
-\sum_{j=2}^k \binom{k+1}{j+1} \left(D_s^{k+1-j} \log |c'|\right) D_s^j \langle D_s c_t, v \rangle \sqrt{|c'|}\,,
\end{multline*}
which can be estimated by
\begin{align*}
\Big\| \p_t \big((D_s^{k+1} &\log |c'|) \sqrt{|c'|}\big)\Big\|_{L^2(d\th)} \\
&{}\leq
\left\| D_s^{k+1} \langle D_s c_t, v \rangle \right\|_{L^2(ds)} 
+ (k+\tfrac 12) \left\| D_s^{k+1} \log |c'|\right\|_{L^2(ds)} 
\Big\| D_s c_t \Big\|_{L^\infty} \\
&\qquad {}+ \binom{k+1}{2} \left\| D_s^{k} \log |c'|\right\|_{L^2(ds)} 
\Big\| D_s \langle D_s c_t, v \rangle \Big\|_{L^\infty}\\
&\qquad {}+ \sum_{j=2}^k \binom{k+1}{j+1} 
\left\| D_s^{k+1-j} \log |c'| \right\|_{L^\infty}
\Big \| D_s^j \langle D_s c_t, v \rangle \Big\|_{L^2(ds)} \\
&\lesssim_{c_0, N} \left(1 + \| D_s^{k+1} \log |c'| \|_{L^2(ds)} \right) 
\sqrt{G_c(c_t,c_t)}\,.
\end{align*}

{\bf Step 2.} The proof of this step depends only on the estimates \eqref{eq:pt_ka} and \eqref{eq:pt_logcp}. 
We have a path $c(t,\th)$ between $c_1$ and $c_2$. 
We write again $D_{c_1}$ and $D_{c(t)}$ for $D_{s_{c_1}}$ and $D_{s_{c(t)}}$, respectively. 
Define the functions
\begin{align}
\label{eq:def_A_ka}
A(t) &= \left\| (D_{c_1}^k\ka_1) \sqrt{|c_1'|} - (D_{c(t)}^k\ka(t)) \sqrt{|c(t)'|} \right\|_{L^2(d\th)}\\
\label{eq:def_B_logcp}
B(t) &= \left\| (D_{c_1}^{k+1}\log |c_1'|) \sqrt{|c_1'|} - (D_{c(t)}^{k+1}\log |c(t)'|) \sqrt{|c(t)'|} \right\|_{L^2(d\th)}\,.
\end{align}
From
\[
(D_{c}^k\ka) \sqrt{|c'|}(t,\th) - (D_{c_1}^k\ka_1) \sqrt{|c_1'|}(\th) =
\int_0^t \p_t (D_s^k \ka) \sqrt{|c'|}) (\ta, \th) \ud \ta
\]
we get, by taking norms,
\begin{align*}
A(t) &\leq \int_0^t \left \|\p_t (D_s^k \ka \sqrt{|c'|})\right\|_{L^2(d\th)} \ud \ta \\
& \lesssim_{c_0, N} \int_0^t \left(1 + \| D_s^k \ka \|_{L^2(ds)}\right) \sqrt{G_c(c_t, c_t)} \ud \ta \\
& \lesssim_{c_0, N} \int_0^t \left(1 + \| D_s^k \ka_1 \|_{L^2(ds)} + A(\ta) \right) \sqrt{G_c(c_t, c_t)} \ud \ta\,.
\end{align*}
Now we use Gronwall's inequality, Cor.\ \ref{cor:gronwall_applied}, to obtain
\[
A(t) \lesssim_{c_0, N} \left(1 + \| D_s^k \ka_1 \|_{L^2(ds)}\right) \int_0^t \sqrt{G_c(c_t, c_t)} \ud \ta\,.
\]
Taking the infimum over all paths and evaluating at $t=1$ then yields almost the desired inequality
\begin{equation}
\label{eq:almost_ineq}
\left\| (D_{c_1}^k\ka_1) \sqrt{|c_1'|} - (D_{c_2}^k\ka_2) \sqrt{|c_2'|} \right\|_{L^2(d\th)}
\lesssim_{c_0, N} \left(1 + \| D_s^k \ka_1 \|_{L^2(ds)}\right) \on{dist}^G(c_1, c_2)\,.
\end{equation}
To bound $\| D_s^k \ka_1 \|_{L^2(ds)}$, which appears on the right hand side, we apply \eqref{eq:almost_ineq} with $c_2 = c_0$.
\begin{align*}
\| D_s^k \ka_1 \|_{L^2(ds)} &\leq 
\left\| D_{c_1}^k(\ka_1) \sqrt{|c_1'|} - D_{c_0}^k(\ka_0) \sqrt{|c_0'|} \right\|_{L^2(d\th)}
+ \| D_s^k \ka_0 \|_{L^2(ds)} \\
&\lesssim_{c_0, N} \left(1 + \| D_s^k \ka_0 \|_{L^2(ds)}\right) \on{dist}^G(c_0, c_1) + 
\| D_s^k \ka_0 \|_{L^2(ds)} 
\lesssim_{c_0, N} 1\,.
\end{align*}
This concludes the proof for $(D_s^k \ka) \sqrt{|c'|}$. 
For $(D_s^{k+1} \log |c'|) \sqrt{|c'|}$ proceed in the same way with $B(t)$ in place of $A(t)$ 
using the estimate \eqref{eq:pt_logcp}.
\end{proof}

\begin{remark}
Theorem \ref{thm:ka_bound} makes no statement about the continuity or local Lipschitz continuity of 
the function $\log |c'|\, \sqrt{|c'|}$, when $G$ is a Sobolev metric of order 1. In fact it appears 
that one needs a metric of order $n\geq 2$. In that case one can use the variational formula  
\[
D_{c,h} \left( \log |c'|\, \sqrt{|c'|}\right) = 
\left( 1 + \tfrac 12 \log |c'| \right) \langle D_s h, v \rangle \sqrt{|c'|}\,,
\]
and the same method of proof -- with $n \geq 2$ one can estimate $\langle D_s h, v \rangle$ using the $L^\infty$-norm -- to show that,
\[
\left(\log |c'|\right) \sqrt{|c'|} : (\on{Imm}(S^1,\R^2), \on{dist}^G) \to L^2(S^1,\R^2)
\]
is continuous and Lipschitz continuous on every metric ball.
\end{remark}

\begin{remark}
\label{rem:Linfty_continuity}
In a similar way we can also obtain continuity in $L^\infty$ instead of $L^2$. Assume the metric 
satisfies \eqref{eq:G_stronger_Dk2} with $n\geq 3$. Then for all $1 \leq k \leq n-2$ the functions 
\begin{align*}
D_s^{k-1} \ka &: (\on{Imm}(S^1,\R^2), \on{dist}^G) \to L^\infty(S^1,\R) \\
D_s^{k} \log|c'| &: (\on{Imm}(S^1,\R^2), \on{dist}^G) \to L^\infty(S^1,\R)
\end{align*}
are continuous and Lipschitz continuous on every metric ball. To prove this we follow the proof of Thm.\ \ref{thm:ka_bound} and replace the estimates \eqref{eq:pt_ka}, \eqref{eq:pt_logcp} with
\begin{align*}
\left \|\p_t \big(D_s^{k-1} \ka \big)\right\|_{L^\infty} &\lesssim_{c_0, N} \left(1 + \| D_s^{k-1} \ka \|_{L^\infty}\right) \sqrt{G_c(c_t, c_t)} \\
\left \|\p_t \big(D_s^{k} \log |c'| \big)\right\|_{L^\infty} &\lesssim_{c_0, N} \left(1 + \| D_s^{k} \log |c'|\|_{L^\infty}\right) \sqrt{G_c(c_t, c_t)}\,,
\end{align*}
which can be established in the same way.

We also have $L^\infty$-continuity of $\log |c'|$, when $n=2$. 
Since we will use it in the proof of geodesic completeness, we shall provide an explicit proof in Lem.\ \ref{lem:lenpt_inv_bound}.
\end{remark}

\begin{lemma}
\label{lem:lenpt_inv_bound}
Let the metric $G$ on $\on{Imm}(S^1,\R^2)$ satisfy
\[
\int_{S^1} |h|^2 + |D_s^n h|^2 \ud s \leq A\, G_c(h, h)\,,
\]
for some $n\geq 2$ and some $A > 0$. Given $c_0 \in \on{Imm}(S^1,\R^2)$ and $N > 0$, there exists a constant $C = C(c_0, N)$ such that for all $c_1, c_2 \in \on{Imm}(S^1,\R^2)$ with $\on{dist}^G(c_0, c_i) < N$ we have
\[
\left\| \log |c_1'| - \log |c_2'| \right\|_{L^\infty} \leq C \on{dist}^G(c_1,c_2)\,.
\]
In particular the function
\[
\log |c'| : (\on{Imm}(S^1,\R^2), \on{dist}^G) \to L^\infty(S^1,\R)
\]
is continuous and Lipschitz continuous on every metric ball.
\end{lemma}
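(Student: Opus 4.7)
The plan is to follow the same template as Theorem \ref{thm:ka_bound}, but with the advantage that the quantity $\log|c'|$ has a particularly simple time derivative, so no induction and no Gronwall step are needed.

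First I would fix $c_0, N$ and take any two curves $c_1, c_2$ with $\on{dist}^G(c_0, c_i) < N$, together with a path $c(t,\th)$ joining them whose length is at most $3N$, so that $c(t)$ remains in the ball of radius $4N$ around $c_0$. From Sect.\ \ref{variational-formulae} we have the pointwise identity
\[
\p_t \log |c'|(t,\th) = \langle D_s c_t, v \rangle(t,\th),
\]
so $\left|\p_t \log |c'|(t,\th)\right| \le \|D_s c_t\|_{L^\infty}$.

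The key step is to bound $\|D_sc_t\|_{L^\infty}$ by $\sqrt{G_c(c_t,c_t)}$ with a constant uniform on the ball. By Lem.\ \ref{lem:poincare} we have $\|D_sc_t\|_{L^\infty}^2 \le \tfrac{\ell_c}{4}\|D_s^2c_t\|_{L^2(ds)}^2$. If $n=2$, the hypothesis gives directly $\|D_s^2 c_t\|_{L^2(ds)}^2 \le A\, G_c(c_t,c_t)$; if $n\ge 3$, we insert Lem.\ \ref{lem:poincare2} to get $\|D_s^2 c_t\|_{L^2(ds)}^2 \le \|c_t\|_{L^2(ds)}^2 + \|D_s^n c_t\|_{L^2(ds)}^2 \le A\, G_c(c_t,c_t)$. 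Since $\ell_c$ is bounded along $c(t)$ by Cor.\ \ref{cor:ell_lip}, we obtain
\[
\|\p_t\log|c'|\|_{L^\infty} \lesssim_{c_0,N} \sqrt{G_c(c_t,c_t)}.
\]

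For each $\th$, integration in $t$ gives $|\log|c_1'|(\th)-\log|c_2'|(\th)| \le \int_0^1 \|\p_t\log|c'|\|_{L^\infty}\ud t \lesssim_{c_0,N} \on{Len}^G(c)$. Taking the supremum over $\th$ and then the infimum over admissible paths $c$ yields
\[
\|\log|c_1'|-\log|c_2'|\|_{L^\infty} \le C(c_0,N)\,\on{dist}^G(c_1,c_2),
\]
which is the claim. I do not expect any real obstacle here: the argument is strictly simpler than Thm.\ \ref{thm:ka_bound} because the time derivative $\langle D_sc_t,v\rangle$ already has no curvature factors, so neither induction on derivatives of $\log|c'|$ nor a Gronwall loop to remove a self-referential $\|D_s^k\ka\|_{L^2(ds)}$-term is necessary; the only ingredients are the Poincaré inequalities and the already-established boundedness of $\ell_c$ on metric balls.
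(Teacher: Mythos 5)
Your proposal is correct and follows essentially the same route as the paper: the pointwise identity $\p_t \log|c'| = \langle D_s c_t, v\rangle$, the bound $\|D_s c_t\|_{L^\infty} \leq \tfrac{\sqrt{\ell_c}}{2}\|D_s^2 c_t\|_{L^2(ds)} \leq \tfrac12\sqrt{\ell_c A}\,\sqrt{G_c(c_t,c_t)}$ via Lem.~\ref{lem:poincare}, Lem.~\ref{lem:poincare2} and Cor.~\ref{cor:ell_lip}, followed by integration in $t$ and the infimum over paths. The only cosmetic difference is that you work directly with a path from $c_1$ to $c_2$ staying in the $4N$-ball, whereas the paper writes the estimate for a path from $c_0$ to $c_1$; the substance is identical.
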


\begin{proof}
Fix $\th \in S^1$ and $c_1 \in \on{Imm}(S^1,\R^2)$ satisfying $\on{dist}^G(c_0, c_1) < N$ and let $c(t,\th)$ be a path between $c_0$ and $c_1$ with $\on{Len}^G(c) \leq 2N$. Then
\[
\p_t \left(\log |c'(\th)|\right) =
\langle D_s c_t(\th), v(\th) \rangle\,.
\]
After integrating and taking norms we get
\begin{align*}
\left| \log |c_1'(\th)| - \log |c_0'(\th)| \right| 
&\leq \int_0^1 |D_s c_t(t, \th)| \ud t\,.
\end{align*}
Using Poincar\'e inequalities and Cor.\ \ref{cor:ell_lip} we can estimate
\begin{multline}
\label{eq:Ds_ct_explicit_bound}
|D_s c_t(\th)| \leq \frac{\sqrt{\ell_c}}2 \| D_s^2 c_t \|_{L^2(ds)} \leq
\\
\leq \frac{\sqrt{\ell_c}}2 \sqrt{ \| c_t \|_{L^2(ds)}^2 + \| D_s^n c_t \|_{L^2(ds)}^2} \leq
\tfrac 12 \sqrt{\ell_c A} \sqrt{G_c(c_t, c_t)}\,.
\end{multline}
Thus by taking the infimum over all paths between $c_0$ and $c_1$ we get
\[
\left\| \log |c_1'| - \log |c_0'| \right\|_{L^\infty} \lesssim_{c_0, N} 
\on{dist}^G(c_0, c_1) \,. \qedhere
\]
\end{proof}

\begin{remark*}
An explicit value for the constant is given by
\[
C(c_0, N) = \tfrac 12 \sqrt{A} \left(\sqrt{\ell_{c_0}} + \tfrac 12 \sqrt{A}N\right)\,.
\]
This can be found by combining the estimates \eqref{eq:Ds_ct_explicit_bound} and \eqref{eq:ell_lip_const}.
\end{remark*}

This corollary gives us upper and lower bounds on $|c'(\th)|$ in terms of the geodesic distance. 
Therefore, a geodesic $c(t,\th)$ for a Sobolev metric with order at 
least 2 cannot leave $\on{Imm}(S^1,\R^2)$ by having $c'(t,\th)=0$ for some $(t,\th)$.

\begin{corollary}
\label{cor:lenpt_inv_bound}
Under the assumptions of Lem.\ \ref{lem:lenpt_inv_bound}, given $c_0 \in \on{Imm}(S^1,\R^2)$ and $N>0$, there exists a constant $C=C(c_0,N)$, such that
\[
\| c' \|_{L^\infty} \leq C\,\qquad\text{  and }\qquad \left\| \frac1{|c'|} \right\|_{L^\infty} \leq C\,
\]
hold for all $c \in \on{Imm}(S^1,\R^2)$ with $\on{dist}^G(c_0,c)<N$.
\end{corollary}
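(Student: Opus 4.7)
The plan is to deduce this directly from Lem.\ \ref{lem:lenpt_inv_bound} by passing through the logarithm. By that lemma, for any $c \in \on{Imm}(S^1,\R^2)$ with $\on{dist}^G(c_0,c) < N$ we have
\[
\bigl\| \log |c'| - \log |c_0'| \bigr\|_{L^\infty} \leq C(c_0,N)\, \on{dist}^G(c_0, c) < C(c_0,N)\, N\,,
\]
so by the triangle inequality
\[
\bigl\| \log |c'| \bigr\|_{L^\infty} \leq \bigl\| \log |c_0'| \bigr\|_{L^\infty} + C(c_0,N)\, N =: M(c_0, N)\,,
\]
where the first term on the right is finite since $c_0$ is a fixed smooth immersion, so $|c_0'|$ is bounded away from $0$ and $\infty$.

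Next I would simply exponentiate the pointwise inequality $-M \leq \log|c'(\th)| \leq M$ to obtain
\[
e^{-M(c_0,N)} \leq |c'(\th)| \leq e^{M(c_0,N)}\qquad \text{for all }\th \in S^1\,.
\]
Setting $C = e^{M(c_0,N)}$ then yields both $\|c'\|_{L^\infty} \leq C$ and $\|1/|c'|\|_{L^\infty} \leq C$, as required. There is no real obstacle here; the work has already been done in Lem.\ \ref{lem:lenpt_inv_bound}, and this corollary is just a packaging statement that records what those bounds imply about $|c'|$ itself (as opposed to its logarithm), which is the form needed in the subsequent geodesic completeness argument to prevent a geodesic from leaving $\on{Imm}(S^1,\R^2)$.
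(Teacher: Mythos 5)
Your proposal is correct and is essentially the paper's own argument: bound $\log|c'|$ in $L^\infty$ via Lem.\ \ref{lem:lenpt_inv_bound} and the triangle inequality against $\log|c_0'|$, then exponentiate to control both $|c'|$ and $|c'|^{-1}$. The only cosmetic difference is that the paper writes the pointwise estimate for $\pm\log|c'(\th)|$ separately before taking the supremum, whereas you bound $\|\log|c'|\|_{L^\infty}$ once and read off both conclusions.
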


\begin{proof}
By Lem.\ \ref{lem:lenpt_inv_bound} we have
\[
\| \log |c'(\th)| \leq
\| \log |c_0'(\th)| + \left\| \log |c'| - \log |c_0'| \right\|_{L^\infty}
 \lesssim_{c_0, N}  1\,.
\]
Now apply $\on{exp}$ and take the supremum over $\th$ to obtain $\| c'\|_{L^\infty} \lesssim_{c_0,N} 1$. Similarly by starting from
\[
-\| \log |c'(\th)| \leq
-\| \log |c_0'(\th)| + \left\| \log |c'| - \log |c_0'| \right\|_{L^\infty}
 \lesssim_{c_0, N}  1\,.
\]
we obtain the bound $\left\| |c'|\i \right\| \lesssim_{c_0,N}$.
\end{proof}

\begin{remark*}
Using the explicit constant for Lem.\ \ref{lem:lenpt_inv_bound}, we can obtain the following more explicit inequalities,
\begin{align*}
|c'(\th)| &\leq |c_0'(\th)| \exp\left( \tfrac 12 \sqrt{A}N \left( \sqrt{\ell_{c_0}} + \tfrac 12 \sqrt{A}N\right)\right) \\
|c'(\th)|\i &\leq |c_0'(\th)|\i \exp\left( \tfrac 12 \sqrt{A}N \left( \sqrt{\ell_{c_0}} + \tfrac 12 \sqrt{A}N\right)\right)\,.
\end{align*}
for Cor.\ \ref{cor:lenpt_inv_bound}.
\end{remark*}

\begin{remark}
To simplify the exposition, the results in this section were formulated on the space 
$\on{Imm}(S^1,\R^2)$ of smooth immersions. 
If $G$ is a Sobolev metric of order $n$ with $n\geq 2$, 
we can replace $\on{Imm}(S^1,\R^2)$ by $\on{Imm}^n(S^1,\R^2)$ in all statements of this section  with the same proofs.
\end{remark}

\section{Geodesic Completeness for Sobolev Metrics}

On the space $H^n(S^1,\R^d)$ we have two norms: the $H^n(d\th)$-norm as well as the $H^n(ds)$-norm, which depends on the choice of a curve $c \in \on{Imm}(S^1,\R^2)$. Although the norms are equivalent, the constant in the inequality
\[
C\i \| h \|_{H^k(d\th)} \leq \| h \|_{H^k(ds)} \leq C \| h \|_{H^k(d\th)}\,,
\]
depends in general on the curve and its derivatives. The next lemma shows, that if $c$ remains in a metric ball with respect to the geodesic distance, then the constant depends only on the center and the radius of the ball.

\begin{lemma}
\label{lem:Hk_local_equivalence}
Let the metric $G$ on $\on{Imm}(S^1,\R^2)$ satisfy
\[
\int_{S^1} |h|^2 + |D_s^n h|^2 \ud s \leq A\, G_c(h,h)
\]
for some $n \geq 2$ and some $A>0$. Given $c_0 \in \on{Imm}(S^1,\R^2)$ and $N > 0$, there exists a constant $C = C(c_0, N)$ such that for $0 \leq k \leq n$,
\[
C\i \| h \|_{H^k(d\th)} \leq \| h \|_{H^k(ds)} \leq C \| h \|_{H^k(d\th)}\,,
\]
holds for all $c \in \on{Imm}(S^1,\R^2)$ with $\on{dist}^G(c_0,c) < N$ and all $h \in H^k(S^1,\R^d)$.
\end{lemma}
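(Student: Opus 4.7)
The approach is to iterate the identity $\p_\th = |c'|\,D_s$ to expand $\p_\th^k h$ as a polynomial combination of the arc-length derivatives $D_s^j h$, and then bound each resulting term in $L^2(d\th)$ on metric balls using the curve-wise bounds from the previous section. The reverse inequality is handled by the analogous expansion of $D_s^k h$ in terms of $\p_\th^j h$ coming from $D_s = |c'|\i\,\p_\th$.

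For $k=0$ the claim is immediate from Cor.~\ref{cor:lenpt_inv_bound}, since $\|h\|_{L^2(ds)}^2 = \int_{S^1} |h|^2\,|c'|\,\ud\th$ and both $|c'|$ and $|c'|\i$ are pointwise bounded on metric balls. For $1 \leq k \leq n$, a straightforward induction produces
\[
\p_\th^k h = |c'|^k D_s^k h + \sum_{j=1}^{k-1} P_{k,j}(c)\,D_s^j h\,,
\]
where each $P_{k,j}(c)$ is an integer-coefficient polynomial in $|c'|$ and in $D_s^i \log|c'|$ for $1 \leq i \leq k-j$, with the top arc-length weight in each monomial equal to $k-j$ (after using Fa\`a di Bruno to rewrite $D_s^i|c'|$ in terms of $|c'|$ and $D_s^l\log|c'|$, $l\le i$). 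To estimate $\|P_{k,j} D_s^j h\|_{L^2(d\th)}$, expand $P_{k,j}$ as a sum of monomials, and split into two cases. If every factor $D_s^i \log|c'|$ appearing in a monomial has order $i \leq n-2$, place all these factors in $L^\infty$ using Rem.~\ref{rem:Linfty_continuity} (and Lem.~\ref{lem:lenpt_inv_bound}) and use Cor.~\ref{cor:lenpt_inv_bound} for the $|c'|$-powers; the remaining $D_s^j h$ is bounded in $L^2(d\th)$, hence in $L^2(ds)$, which by Lem.~\ref{lem:poincare2} is controlled by $\|h\|_{H^k(ds)}$. If instead a monomial contains the top-order factor $D_s^{n-1}\log|c'|$ (which, by the weight count, forces $k=n$, $j=1$, and excludes any further factor of order $\ge n-1$), place $D_s^{n-1}\log|c'|$ in $L^2(ds)$, estimated via Thm.~\ref{thm:ka_bound}, and place $D_s h$ in $L^\infty$, estimated via Lem.~\ref{lem:poincare} by $\|D_s^2 h\|_{L^2(ds)}$, which in turn is $\leq \|h\|_{H^n(ds)}$ by Lem.~\ref{lem:poincare2}; the remaining lower-order factors are in $L^\infty$.

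The reverse inequality $\|h\|_{H^k(ds)} \leq C\|h\|_{H^k(d\th)}$ is proved identically, starting from the dual expansion
\[
D_s^k h = |c'|^{-k} \p_\th^k h + \sum_{j=1}^{k-1} Q_{k,j}(c)\,\p_\th^j h\,,
\]
where the $Q_{k,j}$ are polynomials in $|c'|\i$ and in its $\theta$-derivatives; rewriting these in arc-length terms reduces the task to controlling the same quantities $|c'|,|c'|\i$ and $D_s^i\log|c'|$ already handled. The integration is then against $|c'|\,\ud\th = \ud s$, and we exchange the $L^\infty$ and $L^2$ roles of the factors exactly as before.

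The main obstacle is the borderline case $k=n$, $j=1$: Thm.~\ref{thm:ka_bound} provides only an $L^2(ds)$ bound on $D_s^{n-1}\log|c'|$, not an $L^\infty$ one, so the usual crude Hölder bound on the monomial breaks down and one must instead pair this low-regularity coefficient with $\|D_s h\|_{L^\infty}$, paying for it using the Poincar\'e inequalities. All other monomials are lower-order and present only bookkeeping, since every factor can then be placed in $L^\infty$ with constants depending only on $c_0$ and $N$.
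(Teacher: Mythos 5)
Your proposal is correct and follows essentially the same route as the paper: the Fa\`a di Bruno-type expansion of $\p_\th^k h$ in terms of $D_s^j h$ (and its inverse for the reverse inequality), induction on $k$, $L^\infty$ control of $D_s^i\log|c'|$ for $i\le n-2$ via Thm.~\ref{thm:ka_bound} and Poincar\'e, and the separate treatment of the single borderline monomial at $k=n$, $j=1$ by pairing the merely-$L^2$ coefficient $D_s^{n-1}\log|c'|$ (equivalently $\p_\th^{n-1}|c'|$) with $\|D_s h\|_{L^\infty}$. The only cosmetic difference is that you phrase the coefficients as polynomials in $D_s^i\log|c'|$ while the paper works with $\p_\th^i|c'|$ and converts between the two; the estimates are identical.
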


\begin{proof}
By definition,
\begin{align*}
\| u \|_{H^k(d\th)}^2 &= \| h \|^2_{L^2(d\th)} + \| \p_\th^k h \|^2_{L^2(d\th)} \\
\| u \|_{H^k(ds)}^2 &= \| h \|^2_{L^2(ds)} + \| D_s^k h \|^2_{L^2(ds)}\,.
\end{align*}
The estimates
\[
\left( \min_{\th \in S^1} |c'(\th)| \right) \| h \|^2_{L^2(d\th)}
\leq \| h \|^2_{L^2(ds)} \leq
\| c'\|_{L^\infty} \| h \|^2_{L^2(d\th)}\,,
\]
together with Cor.\ \ref{cor:lenpt_inv_bound} take care of the $L^2$-terms. Thus it remains to compare the derivatives $\| \p_\th^k h \|^2_{L^2(d\th)}$ and $\| D_s^k h \|^2_{L^2(ds)}$. From the identities
\begin{align*}
h' &= |c'| D_s h \\
h'' &= |c'|^2 D_s^2 h + (\p_\th|c'|) D_sh \\
h''' &= |c'|^3 D_s^3 h + 3\,|c'|(\p_\th|c'|) D_s^2 h + (\p_\th^2 |c'|) D_sh \\
h''''\! &= |c'|^4 D_s^4 h + 6\, |c'|^2 (\p_\th |c'|) D_s^3 h + 
\left( 3\left(\p_\th |c'|\right)^2 \!+ 4\, |c'|(\p_\th^2 |c'|)\right)D_s^2 h + (\p_\th^3 |c'|) D_s h\,,
\end{align*}
we generalize to
\begin{equation}
\label{eq:hk_to_Dshk}
\p_\th^k h = \sum_{j=1}^k \sum_{\al \in A_j} c_{j,\al} \prod_{i=0}^{k-1} \left(\p_\th^i |c'|\right)^{\al_i} D_s^j h\,,
\end{equation}
where $c_{j,\al}$ are some constants and $\al=(\al_0,\ldots,\al_{k-1})$ are multi-indices that are summed over the index sets
\[
A_j = \left\{ \al\,:\, \sum_{i=0}^{k-1} i\al_i = k-j,\, \sum_{i=0}^{k-1} \al_i = j \right\}\,.
\]
Equation \eqref{eq:hk_to_Dshk} is related to Fa\`a di Bruno's formula \cite{Bruno1855} and can be proven by induction.

The length $\ell_c$ is bounded on the metric ball by Cor.\ \ref{cor:ell_lip}. 
Then Lem.\ \ref{thm:ka_bound} together with Poincar\'e inequalities shows that
\begin{itemize}
\item
$\| D_s^{n-1} \log|c'| \|_{L^2(ds)}$ and
\item
$\| D_s^{k} \log|c'| \|_{L^\infty}$ for $1 \leq k \leq n-2$
\end{itemize}
are bounded as well. Repeated application of the chain rule for differentiation yields
\begin{equation*}
D_s^k|c'| = D_s^k\left( \exp \log |c'|\right) = |c'|\, D_s^k \log |c'| + \text{lower $D_s$-derivatives of $\log |c'|$}\,.
\end{equation*}
Thus also $\| D_s^{n-1} |c'| \|_{L^2(ds)}$ and $\| D_s^{k} |c'| \|_{L^\infty}$ for $1 \leq k \leq n-2$ are bounded on metric balls. Next we apply formula \eqref{eq:hk_to_Dshk} to $h = |c'|$ obtaining
\begin{equation}
\label{eq:cp_dth}
\p_\th^k|c'| = |c'|^k D_s^k |c'| + \text{lower $D_s$-derivatives of $|c'|$}\,.
\end{equation}
Together with Lem.\ \ref{lem:lenpt_inv_bound} this implies that
\begin{itemize}
\item
$\| \p_\th^{n-1} |c'| \|_{L^2(d\th)}$ and
\item
$\| \p_\th^k |c'| \|_{L^\infty}$ for $0 \leq k \leq n-2$
\end{itemize}
are bounded on metric balls.

We proceed by induction over $k$. The case $k=0$ has been dealt with at the beginning of the proof. Assume $k \leq n-1$ and the equivalence of the norms has been shown for $k-1$. Then the highest derivative of $|c'|$ is $\p_\th^{k-1} |c'|$ and so in \eqref{eq:hk_to_Dshk} we can estimate every term involving $|c'|$ using the $L^\infty$-norm. Thus using Poincaré inequalities and the equivalence of $L^2(d\th)$ and $L^2(ds)$-norms we get
\[
\| \p_\th^k h \|^2_{L^2(d\th)} \lesssim_{c_0, N} \| D_s^k h \|^2_{L^2(ds)}\,.
\]
For the other inequality write
\[
D_s^k h = |c'|^{-k}\, \p_\th^k h - |c'|^{-k} \sum_{j=1}^{k-1} \sum_{\al \in A_j} c_{j,\al} \prod_{i=0}^{k-1} \left(\p_\th^i |c'|\right)^{\al_i} D_s^j h\,,
\]
and use the induction assumption $\| D_s^j h \|^2_{L^2(ds)} \lesssim_{c_0, N} \| \p_\th^j h \|^2_{L^2(d\th)}$ for $0 \leq j < k$.

The only remaining case is $k=n$. There we have to be a bit more careful, since then $\p_\th^{n-1} 
|c'|$ appears in \eqref{eq:hk_to_Dshk}, which cannot be bound using the $L^\infty$-norm. However 
$\p_\th^{n-1} |c'|$ appears only in the summand $\left( \p_\th^{n-1} |c'| \right) D_s h$, i.e. if 
$\al_{n-1} \neq 0$, then $\al_{n-1} = 1$, $\al_i = 0$ for $i \neq n-1$ and $\al \in A_1$. This term 
we can estimate via    
\[
\left\| \left( \p_\th^{n-1} |c'| \right) D_s h \right\|_{L^2(d\th)} \leq
\left\| \p_\th^{n-1} |c'| \right\|_{L^2(d\th)} \| D_s h\|_{L^\infty}\,,
\]
and then depending on which direction we want to estimate, we can use either of
\begin{align*}
\| D_s h \|_{L^\infty} &\leq 2\i \sqrt{\ell_c}\, \left\| D_s^2 h \right\|_{L^2(ds)} \\
\| D_s h \|_{L^\infty} &\leq \left\| |c'|\i \right\|_{L^\infty} \left\| \p_\th h \right\|_{L^\infty} \leq C \left\| |c'|\i \right\|_{L^\infty}
\left\| \p_\th^2 h \right\|_{L^2(d\th)}\,.
\end{align*}
From here we proceed as for $k< n$.
\end{proof}

We saw in Lem.\ \ref{lem:sob_mult} that multiplication is a bounded bilinear map on the spaces $H^k(S^1,\R^d)$ with the $H^k(d\th)$-norm. Since the $H^k(d\th)$-norm and the $H^k(ds)$-norm are equivalent, this holds also for the $H^k(ds)$-norm. A consequence of Lem.\ \ref{lem:Hk_local_equivalence} is that the constant in the inequality
\[
\left\| \langle f, g \rangle \right\|_{H^k(ds)} \leq C \| f \|_{H^k(ds)} \| g \|_{H^k(ds)}\,,
\]
again depends only on the center and radius of the geodesic ball.

\begin{corollary}
\label{cor:Hk_multiplication}
Under the assumptions of Lem.\ \ref{lem:Hk_local_equivalence} there exists a constant $C=C(c_0, N)$ such that for $c \in \on{Imm}(S^1,\R^2)$ with $\on{dist}^G(c_0, c) < N$ and $1 \leq k \leq n$,
\[
\left\| \langle f, g \rangle \right\|_{H^k(ds)} \leq C \| f \|_{H^k(ds)} \| g \|_{H^k(ds)}\,,
\]
holds for all $f, g \in H^k(S^1,\R^d)$. 
\end{corollary}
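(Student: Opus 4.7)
The proof is essentially a sandwich argument: transfer the multiplication estimate from the $H^k(d\th)$-norm, where it is already known, to the $H^k(ds)$-norm, using the norm equivalence with uniform constants on metric balls that was just established.

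First I would invoke Lem.\ \ref{lem:Hk_local_equivalence} to obtain a constant $C_1=C_1(c_0,N)$ such that
\[
C_1^{-1}\|h\|_{H^k(d\th)} \leq \|h\|_{H^k(ds)} \leq C_1 \|h\|_{H^k(d\th)}
\]
for all $h\in H^k(S^1,\R^d)$ and all $c$ with $\on{dist}^G(c_0,c)<N$, for every $0\leq k\leq n$. Next, Lem.\ \ref{lem:sob_mult}, applied with the roles of $n$ and $k$ both taken to be $k$ (legitimate since $1\leq k\leq n$), yields a constant $C_2>0$ independent of $c$ with
\[
\|\langle f,g\rangle\|_{H^k(d\th)} \leq C_2\, \|f\|_{H^k(d\th)}\, \|g\|_{H^k(d\th)}.
\]

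Chaining the three inequalities gives
\[
\|\langle f,g\rangle\|_{H^k(ds)} \leq C_1\, \|\langle f,g\rangle\|_{H^k(d\th)} \leq C_1 C_2\, \|f\|_{H^k(d\th)}\, \|g\|_{H^k(d\th)} \leq C_1^{3} C_2\, \|f\|_{H^k(ds)}\, \|g\|_{H^k(ds)},
\]
so that $C=C_1^{3}C_2$ depends only on $c_0$ and $N$, as required. There is no genuine obstacle here; the only point to verify carefully is that Lem.\ \ref{lem:sob_mult} is applicable at the claimed order (which is immediate since $1\le k\le n$), and that the constant from Lem.\ \ref{lem:Hk_local_equivalence} is the \emph{same} $C_1$ for the three different Sobolev orders appearing, which is ensured by taking the maximum over $0\le k\le n$ of the constants it provides.
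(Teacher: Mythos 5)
Your proposal is correct and coincides with the paper's own proof: both pass from $H^k(ds)$ to $H^k(d\th)$ via Lem.\ \ref{lem:Hk_local_equivalence}, apply the multiplication bound of Lem.\ \ref{lem:sob_mult} there, and return, with all constants controlled by $(c_0,N)$. The extra care you take about applying Lem.\ \ref{lem:sob_mult} at order $k$ and about using a single uniform constant is exactly the (implicit) content of the paper's two-line argument.
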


\begin{proof}
We use Lem.\ \ref{lem:Hk_local_equivalence} and the boundedness of multiplication on $H^k(d\th)$,
\begin{align*}
\left\| \langle f, g \rangle \right\|_{H^k(ds)} 
&\lesssim_{c_0,N} \left\| \langle f, g \rangle \right\|_{H^k(d\th)} \\
&\lesssim_{c_0,N} \| f \|_{H^k(d\th)} \| g \|_{H^k(d\th)} 
\lesssim_{c_0, N} \| f \|_{H^k(ds)} \| g \|_{H^k(ds)}\,. \qedhere
\end{align*}
\end{proof}

This last lemma shows that the identity
\[
\on{Id}: \left((\on{Imm}^n(S^1,\R^2), \on{dist}^G\right) \to \left(H^n(S^1,\R^2), H^n(d\th)\right)
\]
maps bounded sets to bounded sets and that the same holds for the function
\[
\left((\on{Imm}^n(S^1,\R^2), \on{dist}^G\right) \to \R\,,\quad
c \mapsto \| c\|_{H^n(ds)}\,,
\]
when $G$ is stronger than a Sobolev metric of order $n$.

\begin{lemma}
\label{lem:c_bounded}
Let the metric $G$ on $\on{Imm}(S^1,\R^2)$ satisfy
\[
\int_{S^1} |h|^2 + |D_s^n h|^2 \ud s \leq A\, G_c(h,h)
\]
for some $n \geq 2$ and some $A>0$. Given $c_0 \in \on{Imm}(S^1,\R^2)$ and $N > 0$, there exists a constant $C = C(c_0, N)$, such that
\[
\| c \|_{H^n(d\th)} \leq C\,,\qquad
\| c \|_{H^n(ds)} \leq C\,,
\]
hold for all $c \in \on{Imm}(S^1,\R^2)$ with $\on{dist}^G(c_0, c) < N$.
\end{lemma}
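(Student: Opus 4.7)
The plan is to bound $\|c\|_{H^n(ds)}$ directly and then invoke Lem.~\ref{lem:Hk_local_equivalence} applied to $h=c$ to convert this into a bound on $\|c\|_{H^n(d\th)}$. Since $\|c\|_{H^n(ds)}^2 = \|c\|_{L^2(ds)}^2 + \|D_s^n c\|_{L^2(ds)}^2$, there are only two quantities to estimate, and both will be controlled using the pointwise bounds on $|c'|$ and the bounds on derivatives of $\ka$ already developed in Section~4.

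First I would bound $\|c\|_{L^\infty}$ by a path argument. Pick a smooth path $c(t,\cdot)$ from $c_0$ to $c$ with $\on{Len}^G(c) \le 2N$; then $c(1,\th)-c_0(\th) = \int_0^1 c_t(t,\th)\,\ud t$ pointwise in $\th$. Combining Lem.~\ref{lem:poincare} with Lem.~\ref{lem:poincare2} and the hypothesis on $G$ gives
\[
\|c_t\|_{L^\infty}^2 \le \tfrac{2}{\ell_c}\|c_t\|_{L^2(ds)}^2 + \tfrac{\ell_c}{2}\|D_s c_t\|_{L^2(ds)}^2 \lesssim_{c_0,N} G_c(c_t,c_t),
\]
since $\ell_c$ and $\ell_c^{-1}$ are bounded along the path by Cor.~\ref{cor:ell_lip} and Cor.~\ref{cor:ellc_inv_bounded}. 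Integrating in $t$ and taking the infimum over admissible paths yields $\|c-c_0\|_{L^\infty} \lesssim_{c_0,N} \on{dist}^G(c_0,c)$, and therefore $\|c\|_{L^\infty}$ is bounded on the ball. This immediately gives $\|c\|_{L^2(ds)}^2 \le \|c\|_{L^\infty}^2\,\ell_c \lesssim_{c_0,N} 1$.

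Next I would expand $D_s^n c$ using the Frenet equations. Starting from $D_s c = v$ and $D_s v = \ka n$, $D_s n = -\ka v$, a straightforward induction in $n$ shows that for every $n \ge 2$,
\[
D_s^n c = P_n\!\left(\ka, D_s\ka, \dots, D_s^{n-2}\ka\right),
\]
where $P_n$ is a universal polynomial with coefficients in $\{v,n\}$ which depends \emph{linearly} on the top derivative $D_s^{n-2}\ka$; all remaining monomials involve only $D_s^k\ka$ with $k \le n-3$. By Thm.~\ref{thm:ka_bound} the norms $\|D_s^k\ka\|_{L^2(ds)} = \|(D_s^k\ka)\sqrt{|c'|}\|_{L^2(d\th)}$ are bounded on the ball for all $0 \le k \le n-2$, and by iterating the Poincaré inequality of Lem.~\ref{lem:poincare} (applied to $D_s^k\ka$) the lower-order derivatives $D_s^k\ka$ with $k \le n-3$ are in fact bounded in $L^\infty$. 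Consequently, the unique term of $P_n$ containing $D_s^{n-2}\ka$ has $L^2(ds)$-norm controlled by $\|D_s^{n-2}\ka\|_{L^2(ds)}$ times an $L^\infty$-norm of strictly lower-order quantities, while the remaining monomials are uniformly bounded. This gives $\|D_s^n c\|_{L^2(ds)} \lesssim_{c_0,N} 1$.

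Combining the two estimates produces $\|c\|_{H^n(ds)} \lesssim_{c_0,N} 1$, and then Lem.~\ref{lem:Hk_local_equivalence} applied to $h=c$ delivers the matching bound on $\|c\|_{H^n(d\th)}$. The only delicate point of the argument is the combinatorial claim in the second step that $D_s^{n-2}\ka$ enters $D_s^n c$ only linearly with coefficients of order strictly below $n-2$; this follows by induction, since differentiating $P_n$ once via $D_s$ introduces a new factor $D_s^{n-1}\ka$ precisely through $(\p P_n/\p(D_s^{n-2}\ka))\, D_s^{n-1}\ka$, and the partial derivative coefficient involves only lower-order derivatives of $\ka$ by the inductive hypothesis.
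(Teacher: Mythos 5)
Your proof is correct and follows essentially the same route as the paper: bound $\|c\|_{L^2(ds)}$ by a path-length argument, reduce $\|D_s^n c\|_{L^2(ds)}$ to the curvature bounds of Thm.~\ref{thm:ka_bound}, and transfer to the $H^n(d\th)$-norm via Lem.~\ref{lem:Hk_local_equivalence}. If anything, your Frenet expansion of $D_s^n c$ is more careful than the paper's, which simply writes $\|D_s^n c\|_{L^2(ds)} = \|D_s^{n-2}\ka\|_{L^2(ds)}$ --- an identity that holds literally only for $n=2$; for $n\ge 3$ the lower-order monomials you isolate (and control in $L^\infty$ via the Poincar\'e inequalities) genuinely need to be accounted for, exactly as you do.
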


\begin{proof}
It is only necessary to prove the boundedness in one of the norms, since Lem.\ \ref{lem:Hk_local_equivalence} will imply the other one. 
We have
\[
\| c \|^2_{H^n(ds)} = \| c \|^2_{L^2(ds)} + \| D_s^n c \|^2_{L^2(ds)}
= \| c \|^2_{L^2(ds)} + \| D_s^{n-2} \ka \|^2_{L^2(ds)}\,.
\]
The boundedness of $\| D_s^{n-2} \ka \|^2_{L^2(ds)}$ on metric balls has been shown in Thm.\ \ref{thm:ka_bound}. For $\| c \|_{L^2(ds)}$ we choose a path $c(t)$ from $c_0$ to $c = c(1)$ with $\on{Len}^G(c(t)) < 2N$. Then
\begin{align*}
\| c \|_{L^2(ds)} &\lesssim_{c_0,N} \| c \|_{L^2(d\th)} 
\leq \| c -c_0 \|_{L^2(d\th)} + \| c_0 \|_{L^2(d\th)} \\
&\lesssim_{c_0, N} \left\| \int_0^1 \p_t c(t) \ud t \right\|_{L^2(d\th)} 
\leq \int_0^1 \left\| \p_t c(t) \right\|_{L^2(d\th)} \ud t \\
&\lesssim_{c_0, N} \int_0^1 \left\| \p_t c(t) \right\|_{L^2(ds)} \ud t 
\leq \on{Len}^G(c(t)) \lesssim_{c_0, N} 1. \qedhere
\end{align*}
\end{proof}

\begin{remark}
The proof of Lem.\ \ref{lem:Hk_local_equivalence} shows that under the assumptions of Lem.\ \ref{lem:c_bounded} we can choose $C=C(c_0,N)$ such that the additional inequality
\[
\left\| |c'| \right\|_{H^{n-1}(d\th)} \leq C\,,
\]
holds as well.
\end{remark}

Now we are ready to prove the main theorem.

\begin{theorem}
\label{thm:long_time}
Let $n\geq 2$ and let $G$ be a Sobolev metric with constant coefficients $a_i\ge 0$ of order $n$ 
and $a_0,a_n>0$. 
Given $(c_0, u_0) \in T\on{Imm}^n(S^1,\R^2)$ the solution of the geodesic equation for the metric $G$ with initial values $(c_0, u_0)$ exists for all time.
\end{theorem}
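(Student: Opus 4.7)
The plan is a standard no-blow-up argument. By Theorem \ref{thm:geod_ex} there is a unique maximal solution $c:[0,T)\to\on{Imm}^n(S^1,\R^2)$ with $(c(0),c_t(0))=(c_0,u_0)$. Assuming $T<\infty$ for contradiction, I would show that $(c(t),c_t(t))$ extends continuously in $T\on{Imm}^n(S^1,\R^2)$ to some $(c(T),u(T))$; a second application of Theorem \ref{thm:geod_ex} starting from $(c(T),u(T))$ would then contradict maximality of $T$.

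The first step is energy conservation: $G_{c(t)}(c_t,c_t)=G_{c_0}(u_0,u_0)=:E_0$ is constant along geodesics. Since $a_0,a_n>0$, this gives a uniform bound on $\|c_t\|_{L^2(ds)}^2+\|D_s^n c_t\|_{L^2(ds)}^2$, hence on $\|c_t\|_{H^n(ds)}$ by Lemma \ref{lem:poincare2}. In particular $\on{dist}^G(c_0,c(t))\le T\sqrt{E_0}=:N$, so the entire trajectory lies in a single geodesic ball. Invoking Cor.~\ref{cor:lenpt_inv_bound}, Lem.~\ref{lem:c_bounded} and Lem.~\ref{lem:Hk_local_equivalence} on this ball yields uniform bounds, depending only on $(c_0,N)$, for $\|c\|_{H^n(d\th)}$, $\|\,|c'|\,\|_{L^\infty}$, $\|\,1/|c'|\,\|_{L^\infty}$, and for the two-sided equivalence between the $H^n(d\th)$- and $H^n(ds)$-norms; combined with the $H^n(ds)$-bound on $c_t$, this produces a uniform bound $\|c_t(t)\|_{H^n(d\th)}\le C$ on $[0,T)$. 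Since $c(t_2)-c(t_1)=\int_{t_1}^{t_2}c_t\,\ud t$, the map $t\mapsto c(t)$ is Lipschitz into $H^n(d\th)$ and extends to a continuous $c(T)\in H^n$; the pointwise lower bound on $|c'|$ forces $c(T)\in\on{Imm}^n$.

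The more delicate step is convergence of the velocity. I would work with the geodesic equation in momentum form, $\partial_t(\bar L_c c_t)=\tfrac12 H_c(c_t,c_t)$, and aim to show $H_c(c_t,c_t)$ is uniformly bounded in $H^{-n}$. Once this is established, $p(t):=\bar L_{c(t)}c_t(t)$ is Lipschitz into $H^{-n}$ and extends to some $p(T)\in H^{-n}$; by the smoothness of $(c,h)\mapsto\bar L_c\i h$ from Lem.~\ref{lem:Lc_inv_smooth}, $c_t(t)=\bar L_{c(t)}\i p(t)$ then converges in $H^n(d\th)$ to $u(T):=\bar L_{c(T)}\i p(T)\in H^n$, so $(c(t),c_t(t))\to(c(T),u(T))$ in $T\on{Imm}^n(S^1,\R^2)$ and Theorem \ref{thm:geod_ex} extends the solution past $T$.

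The main obstacle is the uniform $H^{-n}$-bound on $H_c(c_t,c_t)$. From \eqref{eq:H_gradient_Sobolev} each summand has the form $D_s^\ast\bigl(|c'|\langle D_s^{2k-j} c_t,D_s^j c_t\rangle v\bigr)$ with $1\le j\le 2k-1\le 2n-1$, so arc-length derivatives of $c_t$ of orders up to $2n-1$ enter even though only $H^n$-regularity is available; half of them live in Sobolev spaces of negative order. To handle the resulting products I would use Lem.~\ref{lem:sob_mult} (multiplication with a factor of negative Sobolev order), Cor.~\ref{cor:Hk_multiplication}, and the $L^\infty$-bounds on $|c'|$ and $v=D_s c$ from Section 4, pairing each high-order derivative $D_s^{2k-j} c_t$ (with $2k-j>n$, viewed as a functional on $H^{2k-j-n}$) against the complementary factor via the equivalence of $H^k(d\th)$- and $H^k(ds)$-norms on metric balls (Lem.~\ref{lem:Hk_local_equivalence}). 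This equivalence, uniform in the ball, is what lets all the estimates transfer between the two natural scales of Sobolev spaces and close in terms of $(c_0,N)$ and $E_0$ alone.
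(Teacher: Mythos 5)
Your proposal is correct and follows essentially the same route as the paper: a continuation/no-blow-up argument based on energy conservation, confinement of the trajectory to a fixed metric ball, the lower bound on $|c'|$ from Cor.~\ref{cor:lenpt_inv_bound} to keep $c(T)$ in $\on{Imm}^n$, the uniform norm equivalence of Lem.~\ref{lem:Hk_local_equivalence} together with Lem.~\ref{lem:c_bounded} to bound $\|c_t\|_{H^n(d\th)}$, and finally a uniform $H^{-n}$-bound on $H_c(c_t,c_t)$ in the momentum formulation. (The paper phrases the continuation step via \cite[Thm.\ 10.5.5]{Dieudonne1969} rather than by explicitly constructing the limit $(c(T),u(T))$, but that is cosmetic.)

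One caveat on the step you correctly identify as the main obstacle. As literally stated, your plan of estimating the summands of \eqref{eq:H_gradient_Sobolev} with Lem.~\ref{lem:sob_mult} runs into trouble at the diagonal term $k=j=n$: there $\langle D_s^{n}c_t, D_s^{n}c_t\rangle$ is a product of two functions that are only in $L^2$, and Lem.~\ref{lem:sob_mult} requires one factor to lie in $H^m$ with $m\geq 1$, so it does not apply; one would have to argue separately through $L^1\hookrightarrow H^{-1}$. The paper sidesteps all of this by never integrating by parts: it estimates $\langle H_c(u,u),m\rangle_{H^{-n}\x H^n}=D_{c,m}G_c(u,u)$ directly in the form
\[
\int_{S^1}\sum_k a_k \langle D_s^k u,D_s^k u\rangle\langle D_s m,v\rangle \ud s
-2\sum_{k}\sum_{j} a_k \int_{S^1}\big\langle D_s^k u, D_s^{k-j}\big(\langle D_s m,v\rangle D_s^j u\big)\big\rangle \ud s\,,
\]
in which no derivative of $u$ of order above $n$ ever appears, and each integral is handled by Cauchy--Schwarz, Cor.~\ref{cor:Hk_multiplication} and Lem.~\ref{lem:Hk_local_equivalence}. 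Your phrase about ``pairing each high-order derivative against the complementary factor'' amounts to undoing the integration by parts and lands at exactly this formula, so the idea is present; just be aware that this reformulation is not an optional convenience but is what makes the diagonal term estimable.
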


\begin{corollary}
Let the metric $G$ be as in Thm.\ \ref{thm:long_time}. Then the Riemannian manifolds $(\on{Imm}^n(S^1,\R^2),G)$ and $\on{Imm}(S^1,\R^2), G)$ are geodesically complete.
\end{corollary}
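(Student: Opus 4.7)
The plan is to argue by contradiction. By Thm.~\ref{thm:geod_ex} the geodesic equation on $T\on{Imm}^n(S^1,\R^2)$ admits unique local solutions, so let $[0,T)$ be the maximal forward interval of existence for initial data $(c_0,u_0)$ and assume $T<\infty$. The strategy is to show that $(c(t),u(t))$ stays in a bounded closed subset of $T\on{Imm}^n(S^1,\R^2)$ that is at positive distance from the boundary $\{|c'|=0\}$, and then to invoke a standard continuation argument to push the solution past $T$. Backward existence will follow by time-reversal.

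The first key ingredient is energy conservation along geodesics, $G_{c(t)}(u(t),u(t))\equiv G_{c_0}(u_0,u_0)=:E$. This yields
\[
\on{dist}^G(c_0,c(t)) \leq \int_0^t \sqrt{G_{c(s)}(u(s),u(s))}\,\ud s \leq T\sqrt{E} =: N,
\]
so the whole trajectory lies in the closed metric ball of radius $N$ around $c_0$. Since $a_0,a_n>0$ we have $\int_{S^1}(|h|^2+|D_s^nh|^2)\ud s \leq \max(a_0^{-1},a_n^{-1})\,G_c(h,h)$, so every hypothesis of the results in Section~4 is fulfilled.

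Combining Cor.~\ref{cor:ell_lip}, Cor.~\ref{cor:ellc_inv_bounded}, Cor.~\ref{cor:lenpt_inv_bound}, Thm.~\ref{thm:ka_bound} and Lem.~\ref{lem:c_bounded} then furnishes a constant $C=C(c_0,u_0,T)$ with
\[
C^{-1} \leq |c'(t,\th)| \leq C, \qquad \|c(t)\|_{H^n(d\th)} \leq C.
\]
Using Lem.~\ref{lem:Hk_local_equivalence} to switch between the $H^n(ds)$- and $H^n(d\th)$-norms, and noting that $G_c(u,u)$ is equivalent to $\|u\|_{H^n(ds)}^2$ because $a_0,a_n>0$ and Lem.~\ref{lem:poincare2} bounds the intermediate derivatives, energy conservation also yields $\|u(t)\|_{H^n(d\th)}\leq C$. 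Consequently $(c(t),u(t))$ remains in the bounded closed set
\[
K = \bigl\{(c,u)\in H^n\x H^n : \|c\|_{H^n(d\th)}, \|u\|_{H^n(d\th)}\leq C,\ \min_\th|c'(\th)|\geq C^{-1}\bigr\},
\]
which is contained in the open set $T\on{Imm}^n(S^1,\R^2)$ and uniformly separated from its boundary.

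To conclude, I would use the smoothness of the geodesic vector field $(c,u)\mapsto(u,\tfrac12 \bar L_c^{-1}(H_c(u,u)-(D_{c,u}\bar L_c)(u)))$ on $T\on{Imm}^n(S^1,\R^2)$, guaranteed by Lem.~\ref{lem:Lc_inv_smooth}, together with a standard continuation argument: applying the local existence theorem of Thm.~\ref{thm:geod_ex} at times $t_k\to T^-$ with initial data $(c(t_k),u(t_k))\in K$ yields a uniform positive lower bound on the lifespan, which extends the solution past $T$ and contradicts maximality. The main obstacle is ensuring that this uniform lower bound on the lifespan depends only on the $H^n$-bounds on $(c,u)$ and the lower bound on $|c'|$ that we have just established on $K$; this is precisely why the preparatory results Lem.~\ref{lem:Hk_local_equivalence}, Lem.~\ref{lem:c_bounded} and Cor.~\ref{cor:Hk_multiplication} are formulated with constants depending only on a metric ball, so that all constants in the Picard iteration for the ODE are controlled by quantities that are uniform on $K$.
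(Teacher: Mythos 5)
There is a genuine gap: your proof establishes (at best) only the first half of the corollary. Geodesic completeness of $(\on{Imm}^n(S^1,\R^2),G)$ is \emph{verbatim} the statement of Thm.~\ref{thm:long_time}, so the contradiction/continuation argument you outline is a re-proof of that theorem rather than a deduction from it; it could simply be cited. The actual new content of the corollary is the completeness of the space $\on{Imm}(S^1,\R^2)$ of \emph{smooth} immersions, and your proposal never addresses it. A smooth initial condition lies in every $\on{Imm}^p(S^1,\R^2)$, $p\geq n$, and global existence in $\on{Imm}^n$ does not by itself prevent the solution from losing regularity: a priori the $H^p$-solution could blow up at some finite $T_p$ while the $H^n$-solution continues. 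The missing ingredient is the statement in Thm.~\ref{thm:geod_ex} that the interval of existence is uniform in the Sobolev order $p$ (the Ebin--Marsden ``no loss, no gain'' argument); combined with Thm.~\ref{thm:long_time} this forces the maximal interval in every $\on{Imm}^p$ to be $[0,\infty)$, so the geodesic stays in $\bigcap_p \on{Imm}^p=\on{Imm}(S^1,\R^2)$ for all time. That uniformity is the entire content of the paper's one-line proof, and it is absent from yours.

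Two secondary remarks on the part you do prove. First, you work in the velocity formulation, so your condition on the vector field requires a bound on $\|u_t\|_{H^n}=\tfrac12\big\|\bar L_c^{-1}\big(H_c(u,u)-(D_{c,u}\bar L_c)(u)\big)\big\|_{H^n}$; this needs a \emph{uniform} bound on $\bar L_c^{-1}:H^{-n}\to H^n$ over the metric ball (Lem.~\ref{lem:Lc_inv_smooth} gives only smoothness, though uniformity can be extracted from Lem.~\ref{lem:Hk_local_equivalence} and Lax--Milgram) plus an $H^{-n}$-bound on the extra term $(D_{c,u}\bar L_c)(u)$, which you do not estimate. The paper avoids both issues by passing to the momentum $p=\bar L_c u$, for which $p_t=\tfrac12 H_c(u,u)$ is bounded in $H^{-n}$ by the estimates of Sect.~5, and then invokes the continuation criterion of Dieudonn\'e in place of your ``uniform lifespan at $t_k\to T^-$'' step. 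Second, your set $K$ is bounded and closed but not compact in $H^n\times H^n$, so the uniform lower bound on the lifespan cannot come from compactness; it must come from uniform bounds on the vector field and its Lipschitz constant on a neighbourhood of $K$, which is exactly what the cited continuation theorem packages and what you flag but do not supply.
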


\begin{proof}
The geodesic completeness of $\on{Imm}(S^1,\R^2)$ follows from Thm.\ \ref{thm:geod_ex}, since given smooth initial conditions the intervals of existence are uniform in the Sobolev order.
\end{proof}

\begin{proof}[Proof of Theorem \ref{thm:long_time}]
The geodesic equation is equivalent to the following ODE on $(T\on{Imm}^n)' \cong \on{Imm}^n \x H^{-n}$,
\begin{align*}
c_{t} &= \bar{L}_c\i p \\
p_t &= \tfrac 12 H_c\left(\bar{L}_c\i p, \bar{L}_c\i p\right)\,,
\end{align*}
with $p(t) = \bar{L}_{c(t)} u(t)$. Fix initial conditions $(c(0), p(0))$. In order to show that the geodesic with these initial conditions exists for all time, we need to show that on any finite interval $[0, T)$, on which the geodesic $(c(t), p(t))$ exists, we have that
\begin{enumerate}[(A)]
\item
\label{completeness_cond1}
the closure of ${c([0,T))}$ in $H^n(S^1,\R^2)$ is contained in   $\on{Imm}^n(S^1,\R^2)$ and,
\item
\label{completeness_cond2}
$\left\| \bar{L}_c\i p \right\|_{H^n(d\th)}$, 
$\tfrac 12 \left\| H_c(\bar{L}_c\i p, \bar{L}_c\i p) \right\|_{H^{-n}(d\th)}$ are bounded on $[0,T)$.
\end{enumerate}
Then we can apply \cite[Thm.\ 10.5.5]{Dieudonne1969} to conclude that $[0,T)$ is not the maximal interval of existence. Since this holds for every $T$, the geodesic must exist on $[0,\infty)$.

Assume now that $T>0$ is fixed. We will pass freely between the momentum and the velocity via $u(t) = \bar{L}\i_{c(t)} p(t)$. Since $c(t)$ is a geodesic, we have
\[
\on{dist}^G(c_0, c(t)) \leq \sqrt{G_{c(0)}(u(0), u(0))}\, T
\quad\text{and}\quad
G_{c(t)}(u(t), u(t)) = G_{c(0)}(u(0), u(0))\,.
\]
In particular the geodesic remains in a metric ball around $c_0$. It follows from Cor.\ \ref{cor:lenpt_inv_bound} that there exists a $C>0$ with $|c'(t,\th)|\geq C$ for $(t,\th) \in [0,T) \x S^1$. Since the set $\{ c : |c'(\th)| \geq C \}$ is $H^2$-closed -- and hence also $H^n$-closed -- in $\on{Imm}^n(S^1,\R^2)$, we can conclude that condition (\ref{completeness_cond1}) is satisfied.

The first part of condition (\ref{completeness_cond2}) follows easily from
\begin{multline*}
\left\| \bar{L}_c\i p \right\|^2_{H^n(d\th)} = \| u \|^2_{H^n(d\th)}
\lesssim_{c_0,T} \| u \|^2_{H^n(ds)} \leq{} \\
\leq \max(a_0\i, a_n\i) G_c(u,u)
= \max(a_0\i, a_n\i) G_{c(0)}(u(0),u(0))\,,
\end{multline*}
using Lem.\ \ref{lem:Hk_local_equivalence} and that the velocity is constant along a geodesic.

It remains to show that $\| H_c(u, u) \|_{H^{-n}(d\th)}$ remains bounded along $c(t)$. To estimate 
this norm, pick $m \in H^n(d\th)$ and consider the pairing 
\begin{multline*}
\langle H_c(u,u), m \rangle_{H^{-n}\x H^n} = D_{c,m} G_c(u,u)  
= \int_{S^1} \sum_{k=0}^n a_k \langle D_s^k u, D_s^k u \rangle
\langle D_s m, v \rangle \ud s -
\\
- 2 \sum_{k=1}^n \sum_{j=1}^k a_k \langle D_s^k u, 
D_s^{k-j}\left( \langle D_s m, v \rangle D_s^j u \right) \rangle \ud s\,.
\end{multline*}
Using Poincar\'e inequalities, Lem.\ \ref{lem:Hk_local_equivalence}, and that $\ell_c$ is bounded 
along $c(t)$, we can estimate the first term, 
\begin{align*}
\left| \int_{S^1} \sum_{k=0}^n a_k \langle D_s^k u, D_s^k u \rangle
\langle D_s m, v \rangle \ud s \right| 
&\leq \| D_s m \|_{L^\infty}\, G_c(u,u) \\
&\lesssim_{c_0,T} \| m \|_{H^n(ds)}
\lesssim_{c_0,T} \| m \|_{H^n(d\th)}\,.
\end{align*}
For the second term we additionally need Cor.\ \ref{cor:Hk_multiplication}. For each $1\leq k \leq n$ and $1 \leq j \leq k$ we have,
\begin{align*}
\bigg| \int_{S^1} \Big\langle D_s^k u, 
D_s^{k-j}\big( \langle D_s m, v \rangle &D_s^j u \big) \Big\rangle \ud s \bigg|
\leq \left\| D_s^k u \right\|_{L^2(ds)} \left\|  D_s^{k-j}\left( 
\langle D_s m, v \rangle D_s^j u \right) \right\|_{L^2(ds)} \\
&\leq \| u \|_{H^k(ds)} \left\| \langle D_s m, v \rangle 
D_s^j u \right\|_{H^{k-j}(ds)} \\
&\lesssim_{c_0,T} \| u \|_{H^k(ds)} \| D_s m \|_{H^{k-j}(ds)} \| v \|_{H^{k-j}(ds)}
\| D_s^j u \|_{H^{k-j}(ds)} \\
&\lesssim_{c_0,T} \| u \|^2_{H^n(ds)} \| c \|_{H^{n}(ds)} \| m \|_{H^{n}(ds)}\,.
\end{align*}
We know that $\| u \|^2_{H^n(ds)}$ is bounded along $c(t)$ and using Lem.\ \ref{lem:c_bounded} we see that $\| c \|_{H^{n}(ds)}$ is bounded as well. Hence we obtain
\[
\left| \langle H_c(u,u), m \rangle_{H^{-n}\x H^n} \right|
\lesssim_{c_0,T} \| m \|_{H^n(d\th)}\,,
\]
which implies
$$
\| H_c(u,u) \|_{H^{-n}(d\th)} \lesssim_{c_0,T} 1\,,
$$
i.e., $\| H_c(u,u) \|_{H^{-n}(d\th)}$ is bounded along the geodesic. 
\end{proof}

\begin{remark}
\label{rem:incomplete}
If $G$ is a Sobolev-type metric of order $n\geq 2$ with $a_0=0$, $a_1=0$, then $G$ is a Riemannian metric on the space $\on{Imm}(S^1,\R^2)/\on{Tra}$ of plane curves modulo translations. We will show that for these metrics it is possible to blow up circles to infinity along geodesics in finite time, making them geodesically incomplete. Thus a non-vanishing zero or first order term is necessary for geodesic completeness.

The 1-dimensional submanifold consisting of all concentric circles, which are parametrized by 
constant speed, is a geodesic with respect to the metric,
because Sobolev-type metrics are invariant under the motion group. 
 Let $c(t,\th) = r(t)\, (\cos \th, 
\sin\th)$. Then $c_t(t,\th) = r_t(t)\, (\cos \th, \sin \th)$ and $|c'(t,\th)| = r(t)$. Thus  
\[
G_c(c_t,c_t) = 2\pi \sum_{j=2}^n a_j r(t)^{1-2j} r_t(t)^2\,,
\]
and the length of the curve is
\[
\on{Len}^G(c) = \int_0^1 \sqrt{2\pi \sum_{j=2}^n a_j r(t)^{1-2j} r_t(t)^2} \ud t
= \sqrt{2\pi} \int_{r(0)}^{r(1)} \sqrt{\sum_{j=2}^n a_j \si^{1-2j}} \ud \si\,.
\]
Since the integral converges for $r(1) \to \infty$, it follows that the path consisting of growing circles can reach infinity with finite length.
\end{remark}

\end{document}